\newcommand{\ud}[0]{\,\mathrm{d}}
\newcommand{\abs}[1]{|#1|}
\newcommand{\Norm}[2]{\|#1\|_{#2}}
\newcommand{\ave}[1]{\langle #1\rangle}
\newcommand{\BMO}[0]{\operatorname{BMO}}
\newcommand{\CMO}[0]{\operatorname{CMO}}
\newcommand{\supp}[0]{\operatorname{supp}}
\newcommand{\loc}[0]{\operatorname{loc}}
\newcommand{\testi}{{\mathcal S}}
\newcommand{\R}{\mathbb{R}}
\newcommand{\C}{\mathbb{C}}
\newcommand{\N}{\mathbb{N}}
\newcommand{\eps}[0]{\varepsilon}
\swapnumbers \numberwithin{equation}{section}
\theoremstyle{plain}
\newtheorem{theorem}[equation]{Theorem}
\newtheorem{proposition}[equation]{Proposition}
\newtheorem{corollary}[equation]{Corollary}
\newtheorem{lemma}[equation]{Lemma}
\newtheorem{conjecture}[equation]{Conjecture}
\theoremstyle{definition}
\newtheorem{definition}[equation]{Definition}
\theoremstyle{remark}
\newtheorem{remark}[equation]{Remark}
 \def\@textbottom{\vskip \z@ \@plus 1pt}
 \let\@texttop\relax
\begin{document}

\title[Extrapolation of compactness]{Extrapolation of compactness on\\ weighted spaces II: Off-diagonal and limited range estimates}

\author[T. \ Hyt\"onen and S. Lappas]{Tuomas Hyt\"onen and Stefanos Lappas}
\address{Department of Mathematics and Statistics, P.O.Box~68 (Pietari Kalmin katu~5), FI-00014 University of Helsinki, Finland}
\email{tuomas.hytonen@helsinki.fi}
\email{stefanos.lappas@helsinki.fi}


\keywords{Weighted extrapolation, compact operator, fractional integral, commutator, Muckenhoupt weight, Bochner--Riesz multiplier, pseudo-differential operator}
\subjclass[2020]{47B38 (Primary); 35S05, 42B20, 42B35, 46B70}



\maketitle


\begin{abstract}
In a previous paper by one of us, a ``compact version'' of Rubio de Francia's weighted extrapolation theorem was proved, which allows one to extrapolate the compactness of an operator from just one space to the full range of weighted spaces, where this operator is bounded. In this paper, we obtain generalizations of this extrapolation of compactness for operators that are bounded from one space to a different one (``off-diagonal estimates'') or only in a limited range of the $L^p$ scale. As applications, we easily recover recent results on the weighted compactness of commutators of fractional integrals and pseudo-differential operators, and obtain new results about the weighted compactness of Bochner--Riesz multipliers.
\end{abstract}

\section{Introduction}

By a {\em weight} we mean a locally integrable function $w\in L^1_{\loc}(\R^d)$ that is positive almost everywhere. We recall the definitions of $A_p(\R^d)$,
$A_{p,q}(\R^d)$, and $RH_r(\R^d)$ classes of weights first introduced by Muckenhoupt \cite{Muckenhoupt:Ap}, Muckenho\-upt--Wheeden \cite{MW:74}, and Gehring \cite{Gehring}:

\begin{definition}\label{def:Muchenhoupt weights}
A weight $w\in L_{\loc}^1(\R^d)$ is called a Muckenhoupt $A_p(\R^d)$ weight (or $w\in A_p(\R^d)$) if 
\begin{equation*}
\begin{split}
  &[w]_{A_p}:=\sup_Q\ave{w}_Q\ave{w^{-\frac{1}{p-1}}}_Q^{p-1}<\infty,\qquad 1<p<\infty, \\
  &[w]_{A_1}:=\sup_Q\ave{w}_Q\Norm{w^{-1}}{L^\infty(Q)}<\infty,\qquad p=1,
\end{split}
\end{equation*}
where the supremum is taken over all cubes $Q\subset\R^d$, and $\ave{w}_Q:=\abs{Q}^{-1}\int_Q w$. A weight $w$ is called an $A_{p,q}(\R^d)$ weight (or $w\in A_{p,q}(\R^d)$) if
\begin{equation*}
  [w]_{A_{p,q}}:=\sup_Q\ave{w^q}_Q^{1/q}\ave{w^{-p'}}_Q^{1/p'}<\infty,\qquad 1<p\leq q<\infty,
\end{equation*}
where $p':=p/(p-1)$ denotes the conjugate exponent.

We say that $w$ belongs to the reverse H\"older class $RH_r(\R^d)$ (or $w\in RH_r(\R^d)$) if
\begin{equation*}
  [w]_{RH_r}:=\sup_Q\ave{w^r}_Q^{1/r}\ave{w}_Q^{-1}<\infty,\qquad 1<r<\infty.
\end{equation*}
\end{definition}

As we will work in the weighted setting, we consider weighted Lebesgue spaces
\begin{equation*}
  L^p(w):=\Big\{f:\R^d\to\C\text{ measurable }\Big|\ \Norm{f}{L^p(w)}:=\Big(\int_{\R^d}\abs{f}^p w\Big)^{1/p}<\infty\Big\}.
\end{equation*}

The classes $A_p(\R^d)$ and $A_{p,q}(\R^d)$ were introduced to study the weighted norm inequalities for the {\em Hardy--Littlewood maximal function} and for {\em fractional integral operators}, respectively; see \cite{Muckenhoupt:Ap,MW:74}. On the other hand, the reverse H\"older classes $RH_r(\R^d)$ were introduced to study the {\em $L^p$-integrability} of the partial derivatives of a quasiconformal mapping; see \cite{Gehring}. The close connection between these weight classes is well-known since the work \cite{CF}.

The following theorem of Rubio de Francia \cite{Rubio:factorAp} on the extrapolation of {\em boundedness} on weighted spaces is one of the highlights in the theory of weighted norm inequalities:

\begin{theorem}[\cite{Rubio:factorAp}]\label{thm:RdF}
Let $1\leq \lambda<p_1<\infty$, and $T$ be a linear operator simultaneously defined and bounded on $L^{p_1}(\tilde w)$ for {\bf all} $\tilde w\in A_{p_1/\lambda}(\R^d)$, with the operator norm dominated by some increasing function of $[\tilde w]_{A_{p_1/\lambda}}$.
Then $T$ is also defined and bounded on $L^p(w)$ for all $p\in(\lambda,\infty)$ and all $w\in A_{p/\lambda}(\R^d)$.
\end{theorem}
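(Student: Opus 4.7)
The plan is the classical argument of Rubio de Francia, via his iteration algorithm combined with duality and the Jones factorization theorem (every $w\in A_p(\R^d)$ admits a decomposition $w=w_1 w_2^{1-p}$ with $w_1,w_2\in A_1(\R^d)$). The core idea is that, given a weight $w\in A_{p/\lambda}(\R^d)$ in the \emph{target} class, one manufactures a weight $\tilde w\in A_{p_1/\lambda}(\R^d)$ to which the hypothesis applies, in such a way that the desired $L^p(w)$ bound on $Tf$ is controlled by the given $L^{p_1}(\tilde w)$ bound together with $\Norm{f}{L^p(w)}$.

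First I would reduce to $\lambda=1$. Setting $q:=p/\lambda$ and $q_1:=p_1/\lambda$, the hypothesis is an inequality for the pair $(|Tf|^\lambda,|f|^\lambda)$ in $L^{q_1}(\tilde w)$ for every $\tilde w\in A_{q_1}$. Applying the $\lambda=1$ extrapolation to the family $\mathcal{F}:=\{(|Tf|^\lambda,C|f|^\lambda)\}$ produces the analogous inequality in $L^q(w)$ for every $q\in(1,\infty)$ and $w\in A_q$, which is exactly the claim for $p=\lambda q>\lambda$ and $w\in A_{p/\lambda}$. Henceforth assume $\lambda=1$, and split into $p<p_1$ and $p>p_1$.

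For $p<p_1$, fix $w\in A_p$ and $f\in L^p(w)$, and use duality:
\begin{equation*}
\Norm{Tf}{L^p(w)}=\sup\Bigl\{\int_{\R^d}|Tf|\,h\,\ud x:h\geq 0,\ \Norm{h}{L^{p'}(w^{1-p'})}\leq 1\Bigr\}.
\end{equation*}
Since $w\in A_p$ if and only if $w^{1-p'}\in A_{p'}$, Muckenhoupt's theorem yields that the Hardy--Littlewood maximal operator $M$ is bounded on $L^{p'}(w^{1-p'})$. The Rubio de Francia iteration
\begin{equation*}
Rh:=\sum_{k\geq 0}\frac{M^k h}{(2\|M\|)^{k}}
\end{equation*}
then produces a majorant $Rh\geq h$ satisfying $\Norm{Rh}{L^{p'}(w^{1-p'})}\leq 2\Norm{h}{L^{p'}(w^{1-p'})}$ and $M(Rh)\leq 2\|M\|\,Rh$, so that $Rh\in A_1$ with constant controlled by $[w]_{A_p}$. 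Combining $Rh$ with the $A_1$ factors of $w$ from Jones factorization then yields a weight $\tilde w\in A_{p_1}$ whose $A_{p_1}$-constant depends only on $[w]_{A_p}$; the hypothesis at $L^{p_1}(\tilde w)$ followed by H\"older's inequality closes the bound on $\int |Tf|\,h\,\ud x$. The complementary case $p>p_1$ is handled analogously by running the iteration directly on $f$ inside a suitable weighted $L^s$-space adapted to $w$, without invoking duality.

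I expect the main obstacle to be the bookkeeping in these two iterations: one must choose precisely the right power of $w$ and the right exponent $s$ on which to run the Rubio de Francia algorithm, and then recombine the resulting $A_1$ weight $Rh$ with the $A_1$ factors of $w$ so that the output lies in $A_{p_1}$ with a quantitative dependence of its $A_{p_1}$-constant on $[w]_{A_p}$ alone (as required by the increasing-function hypothesis of Theorem~\ref{thm:RdF}). Once these algebraic matches are in place, the rest reduces to direct application of the hypothesis and H\"older's inequality.
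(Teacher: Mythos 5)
The paper does not prove Theorem~\ref{thm:RdF}: it is quoted without proof from Rubio de Francia~\cite{Rubio:factorAp}, with full treatments in \cite{RDF1985} and \cite{CUMP:book}. There is therefore no in-paper proof to compare against; what follows assesses your sketch against the standard argument.

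Your outline correctly captures the classical Rubio de Francia scheme: reduce to $\lambda=1$ by working with the pairs $(|Tf|^\lambda,C|f|^\lambda)$; build an $A_1$ majorant $Rh\ge h$ via $R=\sum_{k\ge 0}M^k/(2\|M\|)^k$, which satisfies $\|Rh\|\le 2\|h\|$ and $[Rh]_{A_1}\le 2\|M\|$; use duality against $L^{p'}(w^{1-p'})$ for $p<p_1$; and argue directly for $p>p_1$. Two remarks. First, the reduction to $\lambda=1$ requires the $\lambda=1$ theorem in the pairs-of-functions formulation (as in \cite{CUMP:book}), since $f\mapsto|Tf|^\lambda$ is no longer a linear operator; the paper's own caveat preceding Theorem~\ref{thm:Off-dig.extrp.compact} about pairs versus operators is exactly this point, so you should state it. Second, your invocation of Jones factorization is faithful to the proof in \cite{RDF1985}, but the now-standard presentations (\cite{D2011}, \cite{CUMP:book}) avoid factorization altogether: after running the iteration, one takes the auxiliary weight to be an explicit power of $Rh$ (or of $Rf$ in the complementary case) multiplied by a suitable power of $w$, and verifies membership in $A_{p_1}$ directly from $Rh\in A_1$ and $w\in A_p$; this is cleaner and yields sharper constants. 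Either route then closes by H\"older exactly as you describe, and the exponent bookkeeping you flag as the main obstacle is indeed where the remaining work sits; it is carried out in detail in the references above.
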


In a recent paper, one of the authors \cite{Hyt} provided the following version for extrapolation of {\em compactness}:

\begin{theorem}\label{thm:RdFcompact}
In the setting of Theorem \ref{thm:RdF}, suppose in addition that $T$ is compact on $L^{p_1}(w_1)$ for {\bf some} $w_1\in A_{p_1/\lambda}(\R^d)$. Then $T$ is compact on $L^p(w)$ for all $p\in(1,\infty)$ and all $w\in A_{p/\lambda}(\R^d)$.
\end{theorem}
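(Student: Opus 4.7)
My plan is to combine Rubio de Francia's boundedness extrapolation with a Cwikel-type interpolation theorem for compact operators on weighted $L^p$ spaces. First, applying Theorem~\ref{thm:RdF} directly to the hypothesis, I obtain that $T$ is bounded on $L^q(v)$ for \emph{every} $q\in(\lambda,\infty)$ and every $v\in A_{q/\lambda}(\R^d)$, with a norm bound depending only on $[v]_{A_{q/\lambda}}$. This gives a large family of weighted spaces on which $T$ is bounded; the remaining task is to promote compactness on the single space $L^{p_1}(w_1)$ to any target pair $(p,w)$.

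Fix such a target pair with $p\in(\lambda,\infty)$ and $w\in A_{p/\lambda}$. I would realise $L^p(w)$ as a complex interpolation space between $L^{p_1}(w_1)$ (the compact endpoint) and an auxiliary space $L^{p_0}(w_0)$ to be constructed. Choose $p_0\in(\lambda,\infty)\setminus\{p_1\}$ and $\theta\in(0,1)$ with $1/p = (1-\theta)/p_0+\theta/p_1$, and define the auxiliary weight by
\[
  w^{1/p} \;=\; w_0^{(1-\theta)/p_0}\,w_1^{\theta/p_1}.
\]
The Stein--Weiss identification of weighted $L^p$ spaces under complex interpolation then gives
$[L^{p_0}(w_0),L^{p_1}(w_1)]_\theta = L^p(w)$ isometrically. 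The crucial point is to choose $p_0$ close enough to $p$ that the resulting $w_0$ still lies in $A_{p_0/\lambda}$; this is where the openness (self-improvement) of the Muckenhoupt classes enters, since as $p_0\to p$ one has $\theta\to 0$ and $w_0\to w$, and starting from $w\in A_{p/\lambda}$ one obtains $w_0\in A_{p_0/\lambda}$ for all $p_0$ in a small neighbourhood of $p$. By the first step, $T$ is then bounded on $L^{p_0}(w_0)$.

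Finally, I would invoke a Cwikel-type interpolation theorem for compact operators on complex interpolation couples: if a bounded endomorphism of a couple $(X_0,X_1)$ is compact at one endpoint, it is compact on every intermediate space $[X_0,X_1]_\theta$. Applied to the couple $(L^{p_0}(w_0),L^{p_1}(w_1))$ with the $\theta$ identified above, this yields compactness of $T$ on $L^p(w)$, as required. The main obstacle is the balancing act in the middle step: one must choose the pair $(p_0,\theta)$ so that the Stein--Weiss formula produces a weight $w_0$ which is genuinely admissible (i.e.\ in $A_{p_0/\lambda}$), and this is exactly what the openness of $A_p$ classes buys. Once this admissibility is secured, invoking a version of complex-interpolation compactness flexible enough to allow different weights at the two endpoints completes the argument.
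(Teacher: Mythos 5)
Your overall architecture matches the paper's exactly: extrapolate boundedness via Theorem~\ref{thm:RdF}, realise $L^p(w)$ as $[L^{p_0}(w_0),L^{p_1}(w_1)]_\theta$ via the Stein--Weiss interpolation formula (Theorem~\ref{thm:SW}), and invoke the Cwikel--Kalton one-sided compactness theorem (Theorem~\ref{thm:CwKa}). You also correctly identify the crux: constructing an admissible auxiliary weight $w_0\in A_{p_0/\lambda}$.

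However, the justification you offer for that admissibility is not a valid argument, and this is a genuine gap. You reason that as $\theta\to 0$ one has $p_0\to p$ and $w_0\to w$, and therefore, since $w\in A_{p/\lambda}$, the auxiliary weight $w_0$ must lie in $A_{p_0/\lambda}$ for small $\theta$ by ``openness of the Muckenhoupt classes.'' But the $A_p$ characteristic is not continuous under pointwise (or local $L^1$) convergence of weights, so ``$w_0\to w$'' gives no control of $[w_0]_{A_{p_0/\lambda}}$; and the openness you invoke --- $w\in A_p\Rightarrow w\in A_{p-\eps}$ --- is a statement about $w$ alone, not about the mixed weight
\begin{equation*}
  w_0 \;=\; w^{\frac{p_0}{p(1-\theta)}}\,w_1^{-\frac{\theta p_0}{p_1(1-\theta)}},
\end{equation*}
whose $A_p$ condition couples $w$ with $w_1$ inside a single cube average. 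What is actually needed (see the paper's Lemma~\ref{lem:main1} for the off-diagonal analogue, or \cite[Lemma 4.3]{Hyt} for this precise statement) is to split $\ave{w_0}_Q$ and $\ave{w_0^{-1/(p_0/\lambda-1)}}_Q$ by H\"older's inequality into separate averages of powers $r(\theta),s(\theta),\dots$ of $w$, of $w^{-1/(p/\lambda-1)}$, of $w_1$, and of $w_1^{-1/(p_1/\lambda-1)}$, with all these exponents tending to $1$ as $\theta\to 0$, and then to apply the Coifman--Fefferman reverse H\"older inequality to each of these four $A_\infty$ weights to bring the exponents back down to $1$. Only after that cancellation do the resulting products reassemble into $[w]_{A_{p/\lambda}}^{1/(1-\theta)}[w_1]_{A_{p_1/\lambda}}^{\theta/(1-\theta)}$, showing $[w_0]_{A_{p_0/\lambda}}<\infty$ for $\theta$ small. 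Without this H\"older-plus-reverse-H\"older computation your middle step does not close, so while the skeleton of your argument is right, the proof as written is incomplete at precisely the point you yourself flag as ``the main obstacle.''

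A minor additional remark: the Cwikel--Kalton theorem requires one of its four side conditions; you should note (as the paper does in Lemma~\ref{lem:lemOk}) that weighted $L^{p}$ spaces with $p\in(1,\infty)$ satisfy them, e.g.\ they are complexified Banach lattices on $\R^d$.
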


In this paper, we seek to prove extrapolation of compactness theorems for operators that are bounded either from $L^p$ to $L^q$, for possibly different exponents $1<p\leq q<\infty$ or on $L^p$, for a limited range of the exponent $p$. For these type of operators the following versions of Rubio de Francia's extrapolation theorems are available: 

\begin{theorem}[\cite{HMS}, Harboure--Mac\'ias--Segovia]\label{thm:Off-dig.extrp.}
Let $T$ be a linear operator defined and bounded from $L^{p_1}(\tilde w^{p_1})$ to $L^{q_1}(\tilde w^{q_1})$ for {\bf some} $1<p_1\leq q_1<\infty$ and {\bf all} $\tilde w\in A_{p_1,q_1}(\R^d)$.
Then $T$ is also defined and bounded from $L^p(w^p)$ to $L^q(w^q)$ for all $1<p\leq q<\infty$ such that $\frac{1}{p}-\frac{1}{q}=\frac{1}{p_1}-\frac{1}{q_1}$ and all $w\in A_{p,q}(\R^d)$.
\end{theorem}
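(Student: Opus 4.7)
The plan is to adapt Rubio de Francia's iteration algorithm to the off-diagonal setting. Fix $1 < p \leq q < \infty$ with $\tfrac{1}{p}-\tfrac{1}{q} = \tfrac{1}{p_1}-\tfrac{1}{q_1}$, a weight $w \in A_{p,q}(\R^d)$, and $f \in L^p(w^p)$. By duality of weighted Lebesgue spaces,
\begin{equation*}
\Norm{Tf}{L^q(w^q)} = \sup \int |Tf| \, h w \ud x,
\end{equation*}
where the supremum runs over nonnegative $h$ with $\Norm{h}{L^{q'}} \leq 1$. The strategy is to insert an auxiliary weight $\tilde w$ into the integrand, split by H\"older's inequality, and invoke the hypothesis. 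Concretely, writing $|Tf|\,hw = (|Tf|\tilde w)\cdot(hw/\tilde w)$ and applying H\"older with exponents $(q_1, q_1')$ yields
\begin{equation*}
\int |Tf| \, hw \ud x \leq \Norm{Tf}{L^{q_1}(\tilde w^{q_1})}\Norm{hw/\tilde w}{L^{q_1'}} \leq C([\tilde w]_{A_{p_1,q_1}}) \Norm{f\tilde w}{L^{p_1}} \Norm{hw/\tilde w}{L^{q_1'}},
\end{equation*}
provided $\tilde w \in A_{p_1,q_1}$. The task thus reduces to producing $\tilde w \in A_{p_1,q_1}$, with characteristic quantitatively bounded in terms of $[w]_{A_{p,q}}$, such that $\Norm{f\tilde w}{L^{p_1}}\cdot \Norm{hw/\tilde w}{L^{q_1'}} \lesssim \Norm{fw}{L^p}\cdot \Norm{h}{L^{q'}}$.

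To construct such a $\tilde w$ I would use the Rubio de Francia iteration in tandem with Jones's factorization. The well-known characterizations $w \in A_{p,q} \Leftrightarrow w^q \in A_{1+q/p'} \Leftrightarrow w^{-p'} \in A_{1+p'/q}$ guarantee that the Hardy--Littlewood maximal operator $M$ is bounded on suitable weighted Lebesgue spaces $L^{s_1}(w^q)$ and $L^{s_2}(w^{-p'})$. Iterating $M$ on appropriate \emph{seed} functions built from $f$ and from $h$ respectively produces $A_1$-weights $U_1,U_2$; the desired weight is then set as $\tilde w := U_2^{1/q_1}U_1^{-1/p_1'}$, which lies in $A_{p_1,q_1}$ by Jones's factorization theorem with characteristic controlled by a power of $[w]_{A_{p,q}}$. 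The seeds are chosen precisely so that, after unwinding by H\"older, the factors $\Norm{f\tilde w}{L^{p_1}}$ and $\Norm{hw/\tilde w}{L^{q_1'}}$ reduce respectively to $\Norm{fw}{L^p}$ and $\Norm{h}{L^{q'}}$. The balance condition $\tfrac{1}{p}-\tfrac{1}{q}=\tfrac{1}{p_1}-\tfrac{1}{q_1}$ is exactly what makes the scaling exponents in the two iterations compatible and hence allows this cancellation to close.

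The main obstacle is the delicate bookkeeping in the simultaneous Rubio de Francia iterations: one must choose the exponents $s_1, s_2$, the seeds on which $M$ is iterated, and the factorization powers so that after substitution back into the H\"older product every extraneous power of $f$, $h$, and $w$ cancels with constants depending only on $[w]_{A_{p,q}}$. The argument naturally splits by the sign of $p-p_1$ (equivalently of $q-q_1$): when one of these differences is of a given sign, the corresponding factor can be handled by a single direct H\"older step and only the other requires the full iteration; the two subcases are symmetric under the duality that interchanges the roles of $(f,w,p)$ and $(h, w^{-1}, q')$. The case $p = p_1$ is trivial with $\tilde w = w$.
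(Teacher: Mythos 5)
This theorem is not proved in the paper at all: it is quoted verbatim from Harboure--Mac\'ias--Segovia \cite{HMS}, with sharpened and more general versions attributed to \cite{Lacey2010} and \cite{D2011}. So there is no ``paper's own proof'' to compare against; I can only assess your plan on its own merits.

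Your outline is the standard Rubio de Francia iteration scheme for off-diagonal extrapolation, which is indeed the route taken in the cited sources. The ingredients you name are all correct: dualizing $\Norm{Tf}{L^q(w^q)}$, inserting an auxiliary $\tilde w$ and splitting by H\"older with exponents $(q_1,q_1')$, invoking the hypothesis to trade $\Norm{Tf}{L^{q_1}(\tilde w^{q_1})}$ for $\Norm{f\tilde w}{L^{p_1}}$, using the characterizations $w\in A_{p,q}\Leftrightarrow w^q\in A_{1+q/p'}\Leftrightarrow w^{-p'}\in A_{1+p'/q}$ to get maximal-function bounds, producing $A_1$ majorants by iteration and assembling $\tilde w\in A_{p_1,q_1}$ by Jones factorization, and exploiting the balance condition $\tfrac1p-\tfrac1q=\tfrac1{p_1}-\tfrac1{q_1}$ to note that $p\lessgtr p_1$ forces $q\lessgtr q_1$, so the duality $(f,w,p)\leftrightarrow(h,w^{-1},q')$ swaps the two nontrivial cases. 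That said, what you have written is a strategy, not a proof: the crucial content---the precise exponents on which $M$ is bounded, the explicit seeds, the powers in the factorization, and the computation showing that $\Norm{f\tilde w}{L^{p_1}}\Norm{hw/\tilde w}{L^{q_1'}}$ collapses to $\Norm{fw}{L^p}\Norm{h}{L^{q'}}$---is deferred entirely to ``delicate bookkeeping.'' There is also a small internal inconsistency: you first set $\tilde w=U_2^{1/q_1}U_1^{-1/p_1'}$ as if both iterations run simultaneously, but then correctly observe that for a fixed sign of $p-p_1$ only one factor needs the iteration while the other closes with a single H\"older step; in the standard write-ups, the second description is the accurate one and only one Rubio de Francia algorithm is invoked per case. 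As it stands, this is a faithful road map to the known argument but not a self-contained proof.
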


This applies to the study of the fractional integral operators, also known as the {\em Riesz potentials} (see Section \ref{comm. fr. int. op.}). A version of Theorem \ref{thm:Off-dig.extrp.}, with sharp constants is due to Lacey--Moen--Per\'ez--Torres \cite{Lacey2010}. A more general version, with sharp constants and including values of $0<q<p$, was given by Duoandikoetxea \cite{D2011}.

\begin{theorem}[\cite{AM}, Theorem 4.9 of Auscher--Martell]\label{thm:limited range extrp.}
Let $1\leq p_{-}<p_{+}<\infty$, and $T$ be a linear operator simultaneously defined and bounded on $L^{p_1}(\tilde w)$ for {\bf some} $1\leq p_{-}\leq p_1\leq p_{+}<\infty$ and {\bf all} $\tilde w\in A_{p_1/p_{-}}(\R^d)\cap RH_{(p_{+}/p_1)'}(\R^d)$. Then $T$ is also defined and bounded on $L^p(w)$ for all $p\in(p_{-},p_{+})$ and all $w\in A_{p/p_{-}}(\R^d)\cap RH_{(p_{+}/p)'}(\R^d)$.
\end{theorem}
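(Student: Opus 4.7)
The plan is to adapt the Rubio de Francia iteration scheme underlying Theorem \ref{thm:RdF} so that the auxiliary weight produced by the iteration lands in the \emph{intersection} $A_{p_1/p_-}(\R^d) \cap RH_{(p_+/p_1)'}(\R^d)$ rather than a pure Muckenhoupt class. The structural backbone is the factorization identity $w \in A_p \cap RH_r \iff w^r \in A_{r(p-1)+1}$, together with the duality $w \in A_p \iff w^{1-p'} \in A_{p'}$; these let us translate the target condition $w \in A_{p/p_-} \cap RH_{(p_+/p)'}$ into a Muckenhoupt condition on a power of $w$, and identify the correct dual exponents for the iteration.

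The core construction is a two-sided Rubio de Francia argument. Fix $p \in (p_-, p_+)$ and $w \in A_{p/p_-} \cap RH_{(p_+/p)'}$, and let $f \in L^p(w)$. I would represent $\Norm{Tf}{L^p(w)}$ by duality against a nonnegative test function $h$ of unit norm in the appropriate dual space, and then apply two iteration algorithms to build majorants $F \geq f$ and $H \geq h$ of controlled norm: one iteration uses the standard Hardy--Littlewood maximal operator $M$ (supplying an $A_1$-type factor), and the other uses an $L^s$-averaged variant $M_s g := (M(g^s))^{1/s}$ with $s = (p_+/p)'$ (supplying a reverse-H\"older factor). A suitable product $W = F^{\alpha} H^{\beta}$ then belongs to $A_{p_1/p_-} \cap RH_{(p_+/p_1)'}$ with constants independent of $f$ and $h$, so the hypothesis yields $\Norm{Tf}{L^{p_1}(W)} \leq C \Norm{f}{L^{p_1}(W)}$; unwinding through H\"older's inequality recovers the target $L^p(w)$ estimate.

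The main obstacle will be the bookkeeping of exponents. The scaling parameter $s$ in the second iteration, the integrability indices in which each iteration is carried out, and the powers $\alpha, \beta$ in the product weight must \emph{simultaneously} guarantee (i) the pointwise domination $f \leq F$ and $h \leq H$ with norm control, (ii) the precise membership $W \in A_{p_1/p_-} \cap RH_{(p_+/p_1)'}$ at the exponent $p_1$ for which we have boundedness by hypothesis, and (iii) that the closing application of H\"older's inequality is with unit exponents relative to $L^p(w)$ and its dual. Each requirement becomes a linear relation among the exponents, and checking that these relations admit a common solution precisely when $p_- < p < p_+$ is the crux of the argument. Once this algebra is in place, the conclusion follows along the familiar Rubio de Francia template.
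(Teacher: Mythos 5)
The paper does not prove this theorem; it is quoted verbatim from Auscher--Martell \cite{AM} and used as a black box (see also \cite{CUMP:book}, where it appears in the more general formulation for pairs of functions). That said, your outline matches the standard approach in those sources: the Johnson--Neugebauer identity recorded in Proposition~\ref{prop:weights}\eqref{weights prop. 5} converts the intersection condition $A_{p/p_-}\cap RH_{(p_+/p)'}$ into a Muckenhoupt condition on a power of $w$, and a two-sided Rubio de Francia iteration --- one arm built from $M$, the other from a power-adjusted maximal operator $M_s$ --- produces an auxiliary weight in the hypothesis class at the fixed exponent $p_1$. So the strategy is sound and is essentially the one used in the cited references. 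However, as you yourself flag, the entire technical content of the argument is the exponent bookkeeping in your conditions (i)--(iii), and you do not carry it out. In particular, your proposed choice $s=(p_+/p)'$ makes the iterated majorant $H$ satisfy $H\in A_1\cap RH_{(p_+/p)'}$, whereas the product weight $W$ needs to land in $RH_{(p_+/p_1)'}$ at the \emph{different} exponent $p_1$ given by the hypothesis; reconciling these (and simultaneously matching the $A_{p_1/p_-}$ part and the H\"older duality at the end) requires a genuine computation, possibly with a different normalization of $s$ and extra powers of $w$ in the product, and it is precisely here that the restriction $p_-<p<p_+$ enters. As written, this is a credible plan for a proof rather than a proof; the reader would be directed to \cite{AM} or \cite{CUMP:book} for the actual verification.
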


See also \cite{CUMP:book} where these extrapolation theorems and some others are discussed. In \cite{CUMP:book}, Theorems \ref{thm:Off-dig.extrp.} and \ref{thm:limited range extrp.} are stated in terms of non-negative, measurable pairs of functions $(f,g)$. The reason is that one does not need to work with specific operators since nothing about the operators themselves is used (like linearity or sublinearity) and they play no role. However, we work with linear operators since an abstract compactness result that we will use in order to prove Theorems \ref{thm:Off-dig.extrp.compact} and \ref{thm:limited range extrp.compact} below holds for linear operators (see Theorem \ref{thm:CwKa} of Cwikel--Kalton).

In this paper, we extend the results of \cite{Hyt} about the extrapolation of compactness to the setting of Theorems \ref{thm:Off-dig.extrp.} and \ref{thm:limited range extrp.}:

\begin{theorem}\label{thm:Off-dig.extrp.compact}
In the setting of Theorem \ref{thm:Off-dig.extrp.}, suppose in addition that $T$ is compact from $L^{p_1}(w_1^{p_1})$ to $L^{q_1}(w_1^{q_1})$ for {\bf some} $w_1\in A_{p_1,q_1}(\R^d)$. Then $T$ is compact from $L^p(w^p)$ to $L^q(w^q)$ for all $1<p\leq q<\infty$ such that
$\frac{1}{p}-\frac{1}{q}=\frac{1}{p_1}-\frac{1}{q_1}$ and all $w\in A_{p,q}(\R^d)$.
\end{theorem}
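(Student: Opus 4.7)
The plan is to mimic the approach of \cite{Hyt} for the on-diagonal case, combining Theorem \ref{thm:Off-dig.extrp.} with the Cwikel--Kalton interpolation of compactness (Theorem \ref{thm:CwKa} of the paper). First I would invoke Theorem \ref{thm:Off-dig.extrp.} to upgrade the hypothesis to boundedness of $T\colon L^p(w^p)\to L^q(w^q)$ for \emph{every} admissible pair $(p,q)$ with $\tfrac{1}{p}-\tfrac{1}{q}=\tfrac{1}{p_1}-\tfrac{1}{q_1}$ and every $w\in A_{p,q}(\R^d)$. This produces an abundant supply of bounded ``endpoints'' to be interpolated against the single given compact endpoint.

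Given a target triple $(p,q,w)$, I would then realize $L^p(w^p)\to L^q(w^q)$ as a complex-interpolation step between the compact endpoint $(L^{p_1}(w_1^{p_1}),L^{q_1}(w_1^{q_1}))$ and an auxiliary bounded endpoint $(L^{p_2}(w_2^{p_2}),L^{q_2}(w_2^{q_2}))$. By the Stein--Weiss formula for complex interpolation of weighted $L^p$ spaces, if one picks $\theta\in(0,1)$, exponents $p_2,q_2$, and a weight $w_2$ so that
\[
\tfrac{1}{p}=\tfrac{1-\theta}{p_1}+\tfrac{\theta}{p_2},\qquad \tfrac{1}{q}=\tfrac{1-\theta}{q_1}+\tfrac{\theta}{q_2},\qquad w=w_1^{1-\theta}w_2^{\theta},
\]
then $[L^{p_1}(w_1^{p_1}),L^{p_2}(w_2^{p_2})]_\theta=L^p(w^p)$ and $[L^{q_1}(w_1^{q_1}),L^{q_2}(w_2^{q_2})]_\theta=L^q(w^q)$ simultaneously, with the \emph{same} $\theta$ and the \emph{same} $w_2$. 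Subtracting the first two relations shows that automatically $\tfrac{1}{p_2}-\tfrac{1}{q_2}=\tfrac{1}{p_1}-\tfrac{1}{q_1}$, so the auxiliary endpoint lies on the correct affine line; the third relation forces $w_2=w^{1/\theta}w_1^{-(1-\theta)/\theta}$.

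The main obstacle is verifying that for some $\theta\in(0,1)$ the weight $w_2$ defined by the last formula actually belongs to $A_{p_2,q_2}(\R^d)$, so that Theorem \ref{thm:Off-dig.extrp.} delivers boundedness at the auxiliary endpoint. Heuristically, as $\theta\uparrow 1$ one has $(p_2,q_2,w_2)\to(p,q,w)$, and since $w\in A_{p,q}$ with a finite characteristic, an ``openness''/self-improvement property of the $A_{p,q}$ classes should keep $w_2\in A_{p_2,q_2}$ for $\theta$ in a left neighborhood of $1$. Concretely, I would exploit the standard equivalence $w\in A_{p,q}\iff w^q\in A_{1+q/p'}$, together with the reverse H\"older openness of the $A_s(\R^d)$ classes applied to $w^q$ and to $w_1^{q_1}$; a direct computation of $w_2^{q_2}$ from the formula above, combined with the continuity of the induced $A_s$-index in $\theta$ at $\theta=1$, should then furnish an admissible $\theta$ together with the required estimate on $[w_2]_{A_{p_2,q_2}}$.

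Once such $\theta$ and $(p_2,q_2,w_2)$ are secured, I would close the argument by applying Theorem \ref{thm:CwKa} to the compatible couples $(L^{p_1}(w_1^{p_1}),L^{p_2}(w_2^{p_2}))$ and $(L^{q_1}(w_1^{q_1}),L^{q_2}(w_2^{q_2}))$: compactness of $T$ at the first endpoint (by hypothesis) together with boundedness of $T$ at the second endpoint (from Step 1) yields compactness of $T\colon L^p(w^p)\to L^q(w^q)$, as desired. The hypotheses of Cwikel--Kalton (compatibility of the couples, Banach-lattice structure of the endpoints) are satisfied by weighted Lebesgue spaces exactly as in the on-diagonal argument of \cite{Hyt}.
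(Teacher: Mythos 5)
Your proposal is correct and follows essentially the same route as the paper: invoke Theorem \ref{thm:Off-dig.extrp.} for boundedness on the whole line $\frac{1}{p}-\frac{1}{q}=\frac{1}{p_1}-\frac{1}{q_1}$, realize the target space as a Calder\'on complex-interpolation step (via Theorem \ref{thm:SW}) between the given compact endpoint and a nearby auxiliary endpoint, verify membership of the auxiliary weight in the appropriate $A_{p,q}$ class by a self-improvement/reverse-H\"older argument with a continuity-in-$\theta$ step (this is exactly Lemma \ref{lem:main1} and Proposition \ref{prop:LpvInterm}), and conclude with Cwikel--Kalton (Theorem \ref{thm:CwKa}) using the Banach-lattice side condition. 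The only cosmetic difference is that you label the compact endpoint with index $0$ and take $\theta\uparrow 1$, whereas the paper labels it with index $1$ and takes $\theta\downarrow 0$; for the lattice side condition the Cwikel--Kalton hypothesis is symmetric, so this relabeling is immaterial.
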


\begin{theorem}\label{thm:limited range extrp.compact}
In the setting of Theorem \ref{thm:limited range extrp.}, suppose in addition that $T$ is compact on $L^{p_1}(w_1)$ for {\bf some} $w_1\in A_{p_1/p_{-}}(\R^d)\cap RH_{(p_{+}/p_1)'}(\R^d)$. Then $T$ is compact on $L^{p}(w)$ for all $p\in(p_{-},p_{+})$ and all $w\in A_{p/p_{-}}(\R^d)\cap RH_{(p_{+}/p)'}(\R^d)$.
\end{theorem}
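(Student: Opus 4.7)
The strategy is to follow the template of \cite{Hyt}, where Theorem \ref{thm:RdFcompact} was proved by combining complex interpolation of weighted Lebesgue spaces with the Cwikel--Kalton interpolation theorem for compact operators (Theorem \ref{thm:CwKa}). First, an application of Theorem \ref{thm:limited range extrp.} upgrades the assumed boundedness to boundedness on every $L^q(v)$ with $q\in(p_-,p_+)$ and $v\in A_{q/p_-}\cap RH_{(p_+/q)'}$, so it remains to propagate the compactness hypothesis at the single point $(p_1,w_1)$ to any admissible pair $(p,w)$.

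Fix such a target $(p,w)$. The plan is to realise $L^p(w)$ as a complex interpolation space $[L^{p_1}(w_1),L^{p_2}(w_2)]_\theta$ for a suitable auxiliary $(p_2,w_2)$ in the admissible class and some $\theta\in(0,1)$. The Stein--Weiss formula for weighted Lebesgue spaces reduces this to solving
\begin{equation*}
\frac{1}{p}=\frac{1-\theta}{p_1}+\frac{\theta}{p_2},\qquad w^{1/p}=w_1^{(1-\theta)/p_1}\,w_2^{\theta/p_2}
\end{equation*}
for $(p_2,w_2,\theta)$ with $w_2\in A_{p_2/p_-}\cap RH_{(p_+/p_2)'}$. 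Once such a choice is available, $T$ is bounded on both endpoints (by the extrapolated boundedness above) and compact on $L^{p_1}(w_1)$ (by hypothesis), and since the couple of weighted $L^p$-spaces is a Calder\'on couple, Cwikel--Kalton yields the compactness of $T$ on $L^p(w)$.

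The main obstacle is the algebraic constraint above: the relations force $w_2=\big(w^{1/p}w_1^{-(1-\theta)/p_1}\big)^{p_2/\theta}$, which a priori need not lie in $A_{p_2/p_-}\cap RH_{(p_+/p_2)'}$ for a single choice of $\theta$. The plan is to exploit the openness of this joint condition: the self-improvement properties of both Muckenhoupt and reverse H\"older classes render admissibility stable under small perturbations of the exponent and the weight. Taking $\theta$ close to $1$ forces $p_2$ close to $p$ and $w_2$ close to $w$, so that by openness $(p_2,w_2)$ remains admissible. Should a single step prove insufficient, the same scheme iterates along a finite chain of admissible pairs, with Cwikel--Kalton propagating compactness at every stage.
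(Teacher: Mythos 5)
Your overall strategy coincides with the paper's: extrapolate boundedness via Theorem~\ref{thm:limited range extrp.}, realize $L^p(w)$ as a complex interpolation space with $L^{p_1}(w_1)$ as one endpoint, and apply Cwikel--Kalton (your ``$\theta$ close to $1$'' is the paper's ``$\theta$ close to $0$'' in Lemma~\ref{lem:main2}, just with the endpoints relabeled). The structure of the argument is sound.

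The genuine gap is in the sentence asserting that ``the self-improvement properties of both Muckenhoupt and reverse H\"older classes render admissibility stable under small perturbations of the exponent and the weight.'' This is precisely what needs to be proved and is where essentially all the technical work lies; it does \emph{not} follow from standard openness of $A_q$ or $RH_s$. Note that $w_2=\big(w^{1/p}w_1^{-(1-\theta)/p_1}\big)^{p_2/\theta}$ is not ``close to $w$'' in any topology that comes for free: it is $w$ raised to a power near $1$ multiplied by $w_1$ raised to a small power, and the product of two weights in a class need not be in the class. What actually closes this gap is the computation in Lemma~\ref{lem:main2}: one rewrites the joint $A\cap RH$ condition via the Johnson--Neugebauer equivalence $w\in A_q\cap RH_s\Leftrightarrow w^s\in A_{s(q-1)+1}$ (Proposition~\ref{prop:weights}\eqref{weights prop. 5}), applies H\"older's inequality with carefully chosen auxiliary exponents $\eps,\delta$ to separate the powers of $w$ and $w_1$, and then invokes the reverse H\"older inequality \eqref{eq:RHI} for the four resulting $A_\infty$ weights with exponents $\tilde r(\theta),\tilde s(\theta),\tilde t(\theta),\tilde u(\theta)$ all tending to $1$ as $\theta\to 0$. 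Without this you have only a plausibility argument, not a proof. Your ``finite chain'' fallback does not help: each step would face the same unproved admissibility claim.

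A smaller inaccuracy: you justify the application of Cwikel--Kalton by saying the couple of weighted $L^p$-spaces is a ``Calder\'on couple.'' That is a distinct (and stronger) property concerning $K$-divisibility, and is not one of the four side conditions in Theorem~\ref{thm:CwKa}. The paper instead checks the elementary side condition~\eqref{it:lattice}, that $X_0,X_1$ are complexified Banach lattices on a common measure space (Lemma~\ref{lem:lemOk}), which is what is actually needed.
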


\begin{remark}
Theorems \ref{thm:limited range extrp.} and \ref{thm:limited range extrp.compact} remain true if $p_{+}=\infty$. In this case the reverse H\"older condition on $w$ is vacuous.
\end{remark}

When $w_1\equiv1$, Theorems \ref{thm:Off-dig.extrp.compact} and \ref{thm:limited range extrp.compact} say that we can obtain weighted compactness if the weighted boundedness and unweighted compactness are already known. This case is relevant to all our applications in Sections \ref{comm. fr. int. op.} and \ref{comm. br. mult.}.

The paper is organized as follows: in Section \ref{sec:main results} we present the proofs of Theorems \ref{thm:Off-dig.extrp.compact} and \ref{thm:limited range extrp.compact} by collecting some previously known results and taking some auxiliary results for granted. Section \ref{sec:prop} is dedicated to the proofs of these auxiliary results (see Propositions \ref{prop:LpvInterm} and \ref{prop:limitedrange}). In Sections \ref{comm. fr. int. op.} and \ref{comm. br. mult.} we provide several applications of our main results. In particular, we obtain previously known results for the commutators of fractional integral operators and a new result for the commutators of {\em Bochner--Riesz multipliers}. In Section \ref{sec:Ap(varphi) weights} we develop and apply yet another variant for extrapolation of compactness for a special class of weights related to the commutators of {\em pseudo-differential operators} with smooth symbols.

\subsection*{Notation} Throughout the paper, we denote by $C$ a positive constant which is independent of the main parameters but it may change at each occurrence, and we write $f\lesssim g$ if $f\leq Cg$. The term cube will always refer to a cube $Q\subset\R^d$ and $|Q|$ will denote its Lebesgue measure. Recall from Definition \ref{def:Muchenhoupt weights} that $\ave{w}_Q$ denotes $\abs{Q}^{-1}\int_Q w$, the average of $w$ over $Q$, and $p'$ is the conjugate exponent to $p$, that is $p':=p/(p-1)$.

\subsection*{Acknowledgements} Both authors were supported by the Academy of Finland through the grant No. 314829. The second author wishes to thank his doctoral supervisor Prof. Tuomas Hyt\"onen for helpful discussions. Also, the second author would like to thank the Foundation for Education and European Culture (Founders Nicos and Lydia Tricha) for their financial support during the academic years 2017--2020.

\section{Auxiliary results and the proofs of Theorems \ref{thm:Off-dig.extrp.compact} and \ref{thm:limited range extrp.compact} }\label{sec:main results}

We collect the results from which the proofs of Theorems \ref{thm:Off-dig.extrp.compact} and \ref{thm:limited range extrp.compact} follow. Our main abstract tool is the following theorem of Cwikel--Kalton \cite{CwKa}:

\begin{theorem}[\cite{CwKa}]\label{thm:CwKa}
Let $(X_0,X_1)$ and $(Y_0,Y_1)$ be Banach couples and $T$ be a linear operator such that
$T:X_0+X_1\to Y_0+Y_1$ and $T:X_j\to Y_j$ boundedly for $j=0,1$.
Suppose moreover that $T:X_1\to Y_1$ is compact.
Let $[\ ,\ ]_\theta$ be the complex interpolation functor of Calder\'on.
Then also $T:[X_0,X_1]_\theta\to[Y_0,Y_1]_\theta$ is compact for $\theta\in(0,1)$ under {\bf any} of the following four side conditions:
\
\begin{enumerate}
  \item\label{it:UMD} $X_1$ has the UMD (unconditional martingale differences) property,
  \item\label{it:Xinterm} $X_1$ is reflexive, and $X_1=[X_0,E]_\alpha$ for some Banach space $E$ and $\alpha\in(0,1)$,
  \item\label{it:Yinterm} $Y_1=[Y_0,F]_\beta$ for some Banach space $F$ and $\beta\in(0,1)$,
  \item\label{it:lattice} $X_0$ and $X_1$ are both complexified Banach lattices of measurable functions on a common measure space.
\end{enumerate}
\end{theorem}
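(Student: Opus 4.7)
The plan is to deduce compactness on the interpolation spaces by approximation, handling each of the four side conditions in turn. In every case, boundedness of $T\colon [X_0,X_1]_\theta\to [Y_0,Y_1]_\theta$ follows from Calder\'on's classical complex interpolation theorem; the new content is upgrading this to compactness.

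The underlying strategy I would pursue is to construct a sequence of auxiliary operators $A_n$ compatible with the couple $(X_0,X_1)$ satisfying: $A_n$ is uniformly bounded on both $X_0$ and $X_1$; $A_n\to I$ strongly on a subspace dense in $[X_0,X_1]_\theta$; and $\|T(I-A_n)\|_{X_1\to Y_1}\to 0$. Granted such $A_n$, Calder\'on interpolation applied to $T(I-A_n)$ --- which is uniformly bounded $X_0\to Y_0$ and small in norm $X_1\to Y_1$ --- yields that $T(I-A_n)$ tends to zero in the operator norm $[X_0,X_1]_\theta\to[Y_0,Y_1]_\theta$, while $TA_n$ inherits compactness on the interpolation space from compactness of $T\colon X_1\to Y_1$. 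Thus $T$ is a norm limit of compact operators and is itself compact.

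Under (iv) the $A_n$ arise as multiplications by characteristic functions of level sets of a suitable dominating function, and Calder\'on's product formula $[X_0,X_1]_\theta=X_0^{1-\theta}X_1^\theta$ controls cutoff errors; this is the case directly relevant to the weighted $L^p$ spaces on $(\R^d,dx)$ used later in the paper. Under (ii) one exploits $X_1=[X_0,E]_\alpha$ and the reiteration theorem for complex interpolation to build analytic representatives in the Calder\'on strip whose boundary values yield the $A_n$; condition (iii) is handled symmetrically on the target couple. The UMD case (i) is the most delicate and rests on vector-valued singular integral estimates in UMD spaces to extract operator-norm convergence from analytic data.

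The principal obstacle, and the reason side conditions appear at all, is the simultaneous compatibility of the $A_n$: one needs uniform boundedness at both endpoints together with operator-norm smallness of $T(I-A_n)$ at the compact endpoint. The unrestricted statement --- that boundedness at one endpoint and compactness at the other always interpolate --- is not known in full generality for abstract Banach couples, and each of (i)--(iv) supplies precisely the additional structural information needed to carry out the approximation scheme above.
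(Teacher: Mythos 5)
The paper does not prove this theorem: Theorem \ref{thm:CwKa} is simply quoted from Cwikel--Kalton \cite{CwKa} (with indices $0$ and $1$ swapped, and a pointer to \cite{HNVW1} for the UMD property), so there is no internal proof to compare your attempt against.

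Evaluating the sketch on its own terms: the scheme of approximating $T$ in operator norm by $TA_n$, with $\|T(I-A_n)\|_{X_1\to Y_1}\to 0$ and $T(I-A_n)$ uniformly bounded from $X_0$ to $Y_0$, is a reasonable skeleton in the spirit of Cwikel's one-sided theorem and of how the lattice case is handled. But a central step is asserted without support: you claim that ``$TA_n$ inherits compactness on the interpolation space from compactness of $T\colon X_1\to Y_1$.'' This would follow if $A_n$ mapped $[X_0,X_1]_\theta$ boundedly into $X_1$, so that $TA_n$ factors through the compact map $T\colon X_1\to Y_1$ followed by the continuous inclusion $Y_1\hookrightarrow[Y_0,Y_1]_\theta$; but that mapping property is not among the three requirements you impose on $A_n$, and arranging it is precisely where the structural hypotheses (i)--(iv) must intervene. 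As written, the proposal correctly isolates the difficulty (the open problem of unrestricted two-sided interpolation of compactness for the complex method) and gives a plausible global strategy, but the descriptions of how each side condition enters are impressionistic rather than operative --- for instance, cases (ii) and (iii) in \cite{CwKa} run through reiteration plus Cwikel's one-sided compactness theorem rather than an explicit construction of analytic cutoffs --- so the attempt should be regarded as a heuristic outline rather than a proof.
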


(We have swapped the roles of the indices $0$ and $1$ in comparison to \cite{CwKa}. For the UMD property, see \cite[Ch. 4]{HNVW1}.)
We will use Theorem \ref{thm:CwKa} in the following special settings: 

\begin{proposition}\label{prop:LpvInterm}
Suppose that $1<p\leq q<\infty$, $1<p_1\leq q_1<\infty$ and $v\in A_{p,q}(\R^d)$, $v_1\in A_{p_1,q_1}(\R^d)$. Then
\begin{equation*}
  [L^{p_0}({v_0}^{p_0}),L^{p_1}({v_1}^{p_1})]_\gamma=L^p(v^p)\,\,\,\text{and}\,\,\,[L^{q_0}({v_0}^{q_0}),L^{q_1}({v_1}^{q_1})]_\gamma=L^q(v^q)
\end{equation*}
for some $1<p_0\leq q_0<\infty$, $v_0\in A_{p_0,q_0}(\R^d)$, and $\gamma\in(0,1)$. Moreover, if $\frac{1}{p}-\frac{1}{q}=\frac{1}{p_1}-\frac{1}{q_1}$ we can choose $p_0,q_0$ in such a way that $\frac{1}{p}-\frac{1}{q}=\frac{1}{p_0}-\frac{1}{q_0}$.
\end{proposition}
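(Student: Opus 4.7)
The plan is to apply the Stein--Weiss complex interpolation theorem for weighted Lebesgue spaces: for $\theta \in (0,1)$,
\begin{equation*}
  [L^{a_0}(u_0), L^{a_1}(u_1)]_\theta = L^a(u), \quad \tfrac{1}{a} = \tfrac{1-\theta}{a_0} + \tfrac{\theta}{a_1}, \quad u^{1/a} = u_0^{(1-\theta)/a_0}\,u_1^{\theta/a_1}.
\end{equation*}
Specializing to weights in ``power form,'' $u_j = v_j^{a_j}$, the weight identity collapses to $v = v_0^{1-\theta} v_1^\theta$, which is manifestly independent of the exponents $a_j$. This is the algebraic miracle that lets a single pair $(v_0, \gamma)$ work simultaneously for the $L^p(v^p)$-identity and the $L^q(v^q)$-identity.

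Accordingly, I set
\begin{equation*}
 v_0 := v^{1/(1-\gamma)} v_1^{-\gamma/(1-\gamma)}, \qquad \tfrac{1}{p_0} := \tfrac{1}{1-\gamma}\bigl(\tfrac{1}{p} - \tfrac{\gamma}{p_1}\bigr), \qquad \tfrac{1}{q_0} := \tfrac{1}{1-\gamma}\bigl(\tfrac{1}{q} - \tfrac{\gamma}{q_1}\bigr),
\end{equation*}
which are the unique choices forcing the two Stein--Weiss formulas to reproduce the target spaces. For $\gamma > 0$ sufficiently small, $p_0, q_0 \in (1, \infty)$ and $p_0 \leq q_0$ (since both quotients tend to $\tfrac{1}{p}, \tfrac{1}{q}$ as $\gamma \to 0$). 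An elementary rearrangement gives
\begin{equation*}
 \tfrac{1}{p_0} - \tfrac{1}{q_0} = \tfrac{1}{1-\gamma}\Bigl[\bigl(\tfrac{1}{p} - \tfrac{1}{q}\bigr) - \gamma\bigl(\tfrac{1}{p_1} - \tfrac{1}{q_1}\bigr)\Bigr],
\end{equation*}
so if $\tfrac{1}{p} - \tfrac{1}{q} = \tfrac{1}{p_1} - \tfrac{1}{q_1} =: \alpha$, then the bracket equals $\alpha(1-\gamma)$ and $\tfrac{1}{p_0} - \tfrac{1}{q_0} = \alpha$ automatically, confirming the off-diagonal compatibility claim.

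The core task, and the main obstacle, is to verify that $v_0 \in A_{p_0, q_0}(\R^d)$ for $\gamma$ small enough. Setting $\beta := \gamma/(1-\gamma)$ and expanding
\begin{equation*}
  v_0^{q_0} = v^{q_0(1+\beta)} v_1^{-q_0 \beta}, \qquad v_0^{-p_0'} = v^{-p_0'(1+\beta)} v_1^{p_0' \beta},
\end{equation*}
I would apply H\"older's inequality on each cube-average, with exponents $s, s'$ close to $1, \infty$, to split these into independent contributions from $v$ and from $v_1$. The reverse H\"older self-improvement of the Muckenhoupt classes (i.e., $v \in A_{p,q}$ implies membership in a slightly larger/smaller $A_{p^*,q^*}$-class, and likewise for $v_1$) supplies exactly the slack needed to realign the H\"older exponents with the defining integrals of $[v]_{A_{p,q}}$ and $[v_1]_{A_{p_1,q_1}}$. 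For $\gamma$ (equivalently $\beta$) small enough, all the cube-averages are uniformly bounded in $Q$, yielding
\begin{equation*}
 [v_0]_{A_{p_0, q_0}} \leq C\bigl([v]_{A_{p,q}},\, [v_1]_{A_{p_1, q_1}}\bigr) < \infty.
\end{equation*}

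Once $v_0 \in A_{p_0, q_0}$ is secured, both interpolation identities of the proposition follow directly from Stein--Weiss. The bulk of the technical work is the exponent book-keeping in the $A_{p_0, q_0}$-verification; conceptually it is an open-condition/continuity argument, but the precise matching of H\"older exponents against the reverse H\"older windows of $A_{p,q}$ and $A_{p_1, q_1}$ requires care.
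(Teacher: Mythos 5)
Your proposal follows essentially the same approach as the paper: the same ansatz $v_0 = v^{1/(1-\gamma)} v_1^{-\gamma/(1-\gamma)}$ with $p_0,q_0$ forced by the Stein--Weiss relations, continuity to handle the exponent constraints, and H\"older plus reverse-H\"older self-improvement (via the fact that $v^q$, $v^{-p'}$, $v_1^{q_1}$, $v_1^{-p_1'}$ all lie in Muckenhoupt $A_r$ classes) to verify $v_0 \in A_{p_0,q_0}$ for small $\gamma$. The one thing you make pleasantly explicit that the paper uses silently is that, in power form, the Stein--Weiss weight identity collapses to the exponent-independent relation $v = v_0^{1-\gamma}v_1^\gamma$, which is what lets a single $(v_0,\gamma)$ serve both interpolation identities.
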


\begin{proposition}\label{prop:limitedrange}
Suppose that $1\leq p_{-}<p_{+}<\infty$, $q_1\in[p_{-},p_{+}]$, $q\in(p_{-},p_{+})$ and
\begin{equation*}
v\in A_{q/p_{-}}(\R^d)\cap RH_{(p_{+}/q)'}(\R^d),\qquad v_1\in A_{q_1/p_{-}}(\R^d)\cap RH_{(p_{+}/q_1)'}(\R^d). 
\end{equation*}
Then
\begin{equation*}
  [L^{q_0}(v_0),L^{q_1}(v_1)]_{\gamma}=L^q(v)
\end{equation*}
for some $q_0\in(p_{-},p_{+})$, $v_0\in A_{q_0/p_{-}}(\R^d)\cap RH_{(p_{+}/q_0)'}(\R^d)$, and $\gamma\in(0,1)$.
\end{proposition}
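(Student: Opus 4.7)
The plan is to use Calder\'on's classical complex interpolation theorem for weighted $L^p$ spaces, which gives $[L^{q_0}(v_0), L^{q_1}(v_1)]_\gamma = L^q(v)$ precisely when $1/q = (1-\gamma)/q_0 + \gamma/q_1$ and $v^{1/q} = v_0^{(1-\gamma)/q_0} v_1^{\gamma/q_1}$. The second relation forces $v_0 = v^{q_0/(q(1-\gamma))} v_1^{-q_0\gamma/(q_1(1-\gamma))}$, so only $q_0$ is really free; to obtain $\gamma \in (0,1)$, I would place $q_0 \in (p_-, p_+)$ on the opposite side of $q$ from $q_1$ (when $q_1 = q$ one is forced to take $q_0 = q$ but $\gamma \in (0,1)$ remains free). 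The content of the proposition thus reduces to finding such a $q_0$ for which the forced $v_0$ lies in $A_{q_0/p_-} \cap RH_{(p_+/q_0)'}$.

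To check this weight class condition I would first collapse the joint limited-range condition to a single Muckenhoupt condition via the standard equivalence $w \in A_r \cap RH_{s'} \iff w^{s'} \in A_{s'(r-1)+1}$. Writing $\sigma := 1/p_- - 1/p_+$ and $a(\ell) := 1/\ell - 1/p_+$, a direct computation gives $s'(r-1)+1 = \sigma/a(\ell)$ for $r = \ell/p_-$ and $s' = (p_+/\ell)'$, so $w \in A_{\ell/p_-} \cap RH_{(p_+/\ell)'}$ is equivalent to $W := w^{(p_+/\ell)'} \in A_{\sigma/a(\ell)}$. Setting $V, V_1, V_0$ accordingly, and using the relation $a = (1-\gamma)a_0 + \gamma a_1$ forced by the index equation, the Calder\'on formula for $v_0$ translates into
\[
V_0 = V^\alpha V_1^\beta, \qquad \alpha + \beta = 1, \qquad \alpha = \frac{a}{(1-\gamma)a_0} > 1, \qquad \beta = 1-\alpha < 0,
\]
with $\alpha \to 1$ and $\beta \to 0$ as $q_0 \to q$. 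The task has become: show $V_0 \in A_{\sigma/a_0}$.

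The main step would be to establish this Muckenhoupt membership by combining the Coifman--Fefferman reverse H\"older inequality with the log-convexity of $A_p$. The $A_p$-openness of $V \in A_R$ (with $R := \sigma/a$) yields $V \in A_{R-\eps}$ for some $\eps > 0$, while reverse H\"older gives $V \in RH_{1+\eta_1}$ and $V_1^{-1/(R_1-1)} \in RH_{1+\eta_2}$ for some $\eta_1, \eta_2 > 0$, with $R_1 := \sigma/a_1$. Introducing an auxiliary parameter $t \in (0,1)$ and factoring
\[
V_0 = \bigl(V^{\alpha/(1-t)}\bigr)^{1-t} \cdot \bigl(V_1^{-|\beta|/t}\bigr)^t,
\]
the reverse H\"older bounds (valid provided $\alpha/(1-t) \le 1+\eta_1$ and $|\beta|(R_1-1)/t \le 1+\eta_2$) place the two factors in $A_{(\alpha/(1-t))(R-\eps-1)+1}$ and $A_{|\beta|/t+1}$, respectively. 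Log-convexity of Muckenhoupt classes, together with the cancellation $\alpha - |\beta| = 1$, then gives $V_0 \in A_{\alpha(R-\eps)}$.

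Finally, $\alpha(R-\eps) \to R - \eps < R$ while $R_0 := \sigma/a_0 \to R$ as $q_0 \to q$, so for $q_0$ sufficiently close to $q$ we have $\alpha(R-\eps) \le R_0$, giving $V_0 \in A_{R_0}$; unwinding the substitutions, this says $v_0 \in A_{q_0/p_-} \cap RH_{(p_+/q_0)'}$. The main obstacle I expect is precisely the quantitative bookkeeping: verifying that the constraints $\alpha/(1-t) \le 1+\eta_1$ and $|\beta|(R_1-1)/t \le 1+\eta_2$ define a nonempty admissible interval for $t$ (which they do once $|\beta|$ is small compared with $\eta_1$), and that the $A_p$-openness parameter $\eps$ dominates the gap $R - R_0$ for $q_0$ sufficiently close to $q$; once both are in place, the above chain of inequalities completes the proof.
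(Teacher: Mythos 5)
Your proposal is correct, and it takes a genuinely different route from the paper's proof, although both begin the same way: reduce to the Calder\'on--Stein--Weiss interpolation formula, and use the Johnson--Neugebauer equivalence $w\in A_{\ell/p_-}\cap RH_{(p_+/\ell)'}\iff w^{(p_+/\ell)'}\in A_{\sigma/a(\ell)}$ to collapse the joint condition into a single $A_s$ condition on the transformed weights $V_0=V^\alpha V_1^\beta$ with $\alpha+\beta=1$. After that point the arguments diverge. The paper's Lemma \ref{lem:main2} directly estimates the $A_{s_0}$ characteristic of $v_0$ on each cube: two applications of H\"older with tuned parameters $\eps,\delta$ split the product of averages into four averages, each of which is controlled by the Coifman--Fefferman reverse H\"older inequality since the relevant exponents $\tilde r(\theta),\tilde t(\theta)\to 1$ as $\theta\to 0$; the pieces then reassemble into $[v]_{A_s}^{\cdot}[v_1]_{A_{s_1}}^{\cdot}$, and no self-improvement of the $A_p$ index is invoked. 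Your argument instead proceeds modularly: (i) $A_p$-openness downgrades $V\in A_R$ to $V\in A_{R-\eps}$; (ii) a factorization $V_0=\bigl(V^{\alpha/(1-t)}\bigr)^{1-t}\bigl(V_1^{-|\beta|/t}\bigr)^t$ with an auxiliary $t\in(0,1)$, together with reverse H\"older and a second Johnson--Neugebauer step, places each factor in an explicit $A_p$ class; (iii) the log-convexity inequality $[U_0^{1-t}U_1^t]_{A_P}\le[U_0]_{A_{P_0}}^{1-t}[U_1]_{A_{P_1}}^t$ with $P-1=(1-t)(P_0-1)+t(P_1-1)$, combined with $\alpha-|\beta|=1$, yields $V_0\in A_{\alpha(R-\eps)}$, and $A_p$-monotonicity finishes since $\alpha(R-\eps)\to R-\eps<R=\lim_{q_0\to q}R_0$. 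Each approach has its merits: your argument isolates three reusable ingredients (openness, raising to a power via Johnson--Neugebauer/RHI, and $A_p$ log-convexity) at the cost of an extra auxiliary parameter $t$ and the explicit use of openness; the paper's computation avoids openness altogether and produces the $A_{s_0}$ constant directly, but the H\"older exponents are more opaque. In effect the paper's computation is your log-convexity step unrolled inline, and the ``slack'' that in your proof comes from the openness $\eps$ is supplied in the paper by the exponents tending to $1$ from above. One small caveat common to both: the endpoint $q_1=p_+$ makes $(p_+/q_1)'=\infty$ and the Johnson--Neugebauer reduction degenerate; neither proof addresses this separately, but it is a peripheral boundary case.
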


We postpone the proofs of Propositions \ref{prop:LpvInterm} and \ref{prop:limitedrange} to the following section. The verifications of these propositions are the only components of the proofs of Theorems  \ref{thm:Off-dig.extrp.compact} and \ref{thm:limited range extrp.compact} that require actual computations, rather than just a soft application of known results. 

\begin{lemma}\label{lem:lemOk}
If $p_j\in[1,\infty)$ and $w_j$ are weights, then the spaces $X_j=Y_j=L^{p_j}(w_j)$ satisfy the condition \eqref{it:lattice} of Theorem \ref{thm:CwKa}.
\end{lemma}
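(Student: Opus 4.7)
The plan is a direct verification that the weighted Lebesgue spaces sit naturally as complexified Banach lattices on one common measure space, so condition \eqref{it:lattice} holds essentially by definition.

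First I would fix the common measure space to be $(\R^d,\mathcal B,dx)$, the Borel sets with Lebesgue measure. Since each weight $w_j$ is strictly positive almost everywhere, the null sets of the measures $w_j\,dx$ coincide with the Lebesgue null sets, so every $L^{p_j}(w_j)$ lives inside the same ambient space $L^0(\R^d,dx)$ of (equivalence classes of) measurable functions modulo a.e.\ equality. This gives both $X_0$ and $X_1$ as spaces of $\C$-valued measurable functions on a single measure space, as required.

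Next I would check the lattice structure. Each $L^{p_j}(w_j)$ is the complexification of the real Banach lattice $L^{p_j}_{\R}(w_j)$ equipped with the pointwise a.e.\ order; the modulus operation $f\mapsto |f|$ is well defined, preserves $L^{p_j}(w_j)$, and satisfies $\bigl\||f|\bigr\|_{L^{p_j}(w_j)}=\|f\|_{L^{p_j}(w_j)}$. The monotonicity $|f|\le|g|$ a.e.\ $\Rightarrow$ $\|f\|_{L^{p_j}(w_j)}\le\|g\|_{L^{p_j}(w_j)}$ is immediate from monotonicity of the integral $\int|f|^{p_j}w_j\le\int|g|^{p_j}w_j$. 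Thus each $X_j=Y_j=L^{p_j}(w_j)$ is a complexified Banach lattice of measurable functions on the common space $(\R^d,dx)$, which is precisely condition \eqref{it:lattice}.

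I do not anticipate any real obstacle here: the only point one has to be careful with is ensuring that the two spaces share an underlying $\sigma$-algebra and an unambiguous notion of a.e.\ equality, and this is what the positivity condition $w_j>0$ a.e.\ guarantees for free.
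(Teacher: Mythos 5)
Your proof is correct and follows essentially the same route as the paper, which simply notes that the $L^{p_j}(w_j)$ are manifestly complexified Banach lattices of measurable functions on the common measure space $\R^d$. Your version is a fuller spelling-out of that observation (identifying the common ambient $L^0$, checking the modulus/monotonicity axioms), but no new idea is involved.
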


\begin{proof}
\eqref{it:lattice}: It is easy to see that $X_j=Y_j=L^{p_j}(w_j)$ are complexified Banach lattices of measurable functions on the common measure space $\R^d$.
\end{proof}

\begin{remark}
If $p_j\in(1,\infty)$ then the conditions \eqref{it:UMD}, \eqref{it:Xinterm} and \eqref{it:Yinterm} of Theorem \ref{thm:CwKa} are also satisfied by the spaces $X_j=Y_j=L^{p_j}(w_j)$ (see \cite{Hyt}). For applications of Theorem \ref{thm:CwKa} to these concrete spaces, this is of course more than sufficient. We would only need one of the four side conditions, but in fact we have them all.
\end{remark}

We can now give the proof of our main results:

\begin{proof}[Proof of Theorem \ref{thm:Off-dig.extrp.compact}]
Recall that the assumptions, and hence the conclusions, of Theorem \ref{thm:Off-dig.extrp.} are in force. In particular, $T:L^p(w^p)\to L^q(w^q)$ is a bounded linear operator for all $1<p\leq q<\infty$ such that $\frac{1}{p}-\frac{1}{q}=\frac{1}{p_1}-\frac{1}{q_1}$ and all $w\in A_{p,q}(\R^d)$. In addition, it is assumed that $T:L^{p_1}(w_1^{p_1})\to L^{q_1}(w_1^{q_1})$ is a compact operator for some $1<p_1\leq q_1<\infty$ and some $w_1\in A_{p_1,q_1}(\R^d)$. We need to prove that $T:L^p(w^p)\to L^q(w^q)$ is actually compact for all $1<p\leq q<\infty$ such that $\frac{1}{p}-\frac{1}{q}=\frac{1}{p_1}-\frac{1}{q_1}$ and all $w\in A_{p,q}(\R^d)$. Now, fix some $1<p\leq q<\infty$ and $w\in A_{p,q}(\R^d)$. By Proposition \ref{prop:LpvInterm}, we have
\begin{equation*}
  L^p(w^p)=[L^{p_0}({w_0}^{p_0}),L^{p_1}({w_1}^{p_1})]_\theta\,\,\,\text{and}\,\,\,L^q(w^q)=[L^{q_0}({w_0}^{q_0}),L^{q_1}({w_1}^{q_1})]_\theta
\end{equation*}
for some $1<p_0\leq q_0<\infty$, some $w_0\in A_{p_0,q_0}(\R^d)$, some $\theta\in(0,1)$ and $\frac{1}{p}-\frac{1}{q}=\frac{1}{p_0}-\frac{1}{q_0}$. Writing $X_j=L^{p_j}(w_j^{p_j})$ and $Y_j=L^{q_j}(w_j^{q_j})$, we know that $T:X_0+X_1\to Y_0+Y_1$ and $T:X_j\to Y_j$ is bounded ($T:L^{\tilde p}(w^{\tilde p})\to L^{\tilde q}(w^{\tilde q})$ is a bounded linear operator for all $1<\tilde p\leq\tilde q<\infty$ such that $\frac{1}{\tilde p}-\frac{1}{\tilde q}=\frac{1}{\tilde p_1}-\frac{1}{\tilde q_1}$ and all $w\in A_{\tilde p,\tilde q}(\R^d)$ by Theorem \ref{thm:Off-dig.extrp.}), and that $T:X_1\to Y_1$ is compact (since this was assumed). By Lemma \ref{lem:lemOk}, the last condition \eqref{it:lattice} of Theorem \ref{thm:CwKa} is also satisfied by these spaces $X_j=L^{p_j}(w_j^{p_j})$ and $Y_j=L^{q_j}(w_j^{q_j})$.  By Theorem \ref{thm:CwKa}, it follows that $T:L^p(w^p)=[X_0,X_1]_\theta\to L^q(w^q)=[Y_0,Y_1]_\theta$ is also compact.
\end{proof}

\begin{proof}[Proof of Theorem \ref{thm:limited range extrp.compact}]
Recall that the assumptions, and hence the conclusions, of Theorem \ref{thm:limited range extrp.} are in force. In particular, $T$ is a bounded linear operator on $L^p(w)$ for all $p\in(p_{-},p_{+})$ and all $w\in A_{p/p_{-}}(\R^d)\cap RH_{(p_{+}/p)'}(\R^d)$. In addition, it is assumed that $T$ is a compact operator on $L^{p_1}(w_1)$ for some $p_1\in[p_{-},p_{+}]$ and some $w_1\in A_{p_1/p_{-}}(\R^d)\cap RH_{(p_{+}/p_1)'}(\R^d)$. We need to prove that $T$ is actually compact on $L^p(w)$ for all $p\in(p_{-},p_{+})$ and all $w\in A_{p/p_{-}}(\R^d)\cap RH_{(p_{+}/p)'}(\R^d)$. Now, fix some $p\in(p_{-},p_{+})$ and $w\in A_{p/p_{-}}(\R^d)\cap RH_{(p_{+}/p)'}(\R^d)$. By Proposition \ref{prop:limitedrange}, we have
\begin{equation*}
  L^p(w)=[L^{p_0}(w_0),L^{p_1}(w_1)]_\theta
\end{equation*}
for some $p_0\in(p_{-},p_{+})$, some $w_0\in A_{p_0/p_{-}}(\R^d)\cap RH_{(p_{+}/p_0)'}(\R^d)$ and some $\theta\in(0,1)$. Writing $X_j=Y_j=L^{p_j}(w_j)$, we know that $T:X_0+X_1\to Y_0+Y_1$, that $T:X_j\to Y_j$ is bounded (since $T$ is bounded on all $L^q(w)$ with $q\in(p_{-},p_{+})\cup\{p_1\}$ and $w\in A_{q/p_{-}}(\R^d)\cap RH_{(p_{+}/q)'}(\R^d)$ by the assumptions and the conclusion of Theorem \ref{thm:limited range extrp.}), and that $T:X_1\to Y_1$ is compact (since this was assumed). By Lemma \ref{lem:lemOk}, the last condition \eqref{it:lattice} of Theorem \ref{thm:CwKa} is also satisfied by these spaces $X_j=Y_j=L^{p_j}(w_j)$. By Theorem \ref{thm:CwKa}, it follows that $T$ is also compact on $[X_0,X_1]_\theta=[Y_0,Y_1]_\theta=L^p(w)$.
\end{proof}

\section{The Proofs of Propositions \ref{prop:LpvInterm} and \ref{prop:limitedrange}}\label{sec:prop}

To complete the proofs of Theorems \ref{thm:Off-dig.extrp.compact} and \ref{thm:limited range extrp.compact}, it remains to verify Propositions \ref{prop:LpvInterm} and \ref{prop:limitedrange}. We quote two more classical results:

\begin{proposition}[\cite{RDF1985, Gehring, JN1991}]\label{prop:weights} The following statements hold:
\
\begin{enumerate}
\item \label{weights prop. 1} \ (\cite[Theorem 1.14]{RDF1985}) If $1<p<\infty$, we have $w\in A_p(\R^d)\Longleftrightarrow w^{1-p'}\in
A_{p'}(\R^d)$.
\item \label{weights prop. 2} \ (\cite[Theorem 2.6]{RDF1985}) If $w\in A_p(\R^d)$, $1<p<\infty$, then there exists $1<q<p$ such
that $w\in A_q(\R^d)$.
\item \label{weights prop. 3} \ (\cite[Lemma 3]{Gehring}) If $w\in RH_q(\R^d)$, $1<q<\infty$, then there exists $q<p<\infty$ such
that $w\in RH_p(\R^d)$.
\item \label{weights prop. 4} \ If $w\in A_{p,q}(\R^d)$, $1<p\leq q<\infty$, then $w^q\in A_{1+q/p'}(\R^d)$ and $w^{-p'}\in A_{1+p'/q}(\R^d)$, where $\frac{1}{p}+\frac{1}{p'}=1$.
\item \label{weights prop. 5} \ (\cite[Statement (P6)]{JN1991}) If $1<q,s<\infty$, then $w\in A_q(\R^d) \cap RH_s(\R^d)\Longleftrightarrow w^{s}\in A_{s\,(q-1)+1}(\R^d)$.
\end{enumerate}
\end{proposition}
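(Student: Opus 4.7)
My plan is to handle the five items of Proposition~\ref{prop:weights} separately: items (1), (4), and (5) are purely algebraic and reduce to plugging definitions into one another while tracking exponents, whereas items (2) and (3) are genuine self-improvement statements whose proofs depend on the reverse H\"older lemma.

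For (1), I would set $\sigma:=w^{1-p'}$ and use that $(p')'=p$ together with $1/(p'-1)=p-1$, so that $\sigma^{-1/(p'-1)}=w$. Writing out $[\sigma]_{A_{p'}}=\sup_Q\ave{w^{1-p'}}_Q\ave{w}_Q^{p'-1}$ and raising to the power $p-1$ recovers $[w]_{A_p}$ exactly, giving $[\sigma]_{A_{p'}}=[w]_{A_p}^{1/(p-1)}$ and hence the equivalence. For (4), expanding $[w^q]_{A_{1+q/p'}}$ gives $\sup_Q\ave{w^q}_Q\ave{w^{-p'}}_Q^{q/p'}=[w]_{A_{p,q}}^q$, since $1/(1+q/p'-1)=p'/q$; the second assertion of (4) follows by the symmetric computation for $w^{-p'}$. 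For (5), with $s(q-1)+1-1=s(q-1)$, the $A_{s(q-1)+1}$ functional of $w^s$ is $\sup_Q\ave{w^s}_Q\ave{w^{-1/(q-1)}}_Q^{s(q-1)}$; the forward direction uses the $RH_s$ inequality to bound $\ave{w^s}_Q\le[w]_{RH_s}^s\ave{w}_Q^s$ and then the $A_q$ condition to absorb the remaining factor, while the converse recovers $A_q$ via the Jensen inequality $\ave{w}_Q\le\ave{w^s}_Q^{1/s}$ and recovers $RH_s$ via the Jensen lower bound $\ave{w^{-1/(q-1)}}_Q^{q-1}\ge\ave{w}_Q^{-1}$.

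For (2) and (3) I would quote the reverse H\"older lemma, which is the genuine substance of these two items. Coifman--Fefferman gives that every $w\in A_p$ lies in $RH_{1+\varepsilon}$ for some $\varepsilon=\varepsilon(d,p,[w]_{A_p})>0$; applying this to the dual weight $\sigma=w^{1-p'}\in A_{p'}$ coming from (1), and then using (1) once more to transfer the improved integrability of $\sigma$ back to $w$, produces $w\in A_q$ for some $q<p$, which is (2). For (3), Gehring's original lemma is precisely the corresponding self-improvement statement for the reverse H\"older class and proves the claim directly.

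The only step with real content is the reverse H\"older lemma needed for (2) and (3); its proof is a standard Calder\'on--Zygmund stopping-time / good-$\lambda$ argument. Since Proposition~\ref{prop:weights} is being invoked here only as a package of classical tools, I would simply cite \cite{RDF1985,Gehring,JN1991} rather than reproduce the stopping-time arguments.
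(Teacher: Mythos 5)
Your proposal is correct and, for the one item the paper actually proves, takes essentially the same route: the paper's proof of item~\eqref{weights prop. 4} consists of the single observation that $w\in A_{p,q}(\R^d)\Leftrightarrow w^q\in A_{1+q/p'}(\R^d)$, obtained by unwinding the definitions exactly as you do; the other four items the paper simply cites from \cite{RDF1985,Gehring,JN1991} without proof. Your sketches of (1), (2), (3), (5) are all sound, and for (4) your constant $[w^q]_{A_{1+q/p'}}=[w]_{A_{p,q}}^q$ is the correct quantitative relation (the paper's line ``$[w]_{A_{p,q}}=[w^q]_{A_r}$'' omits the power $q$, though this has no bearing on the qualitative statement being established).
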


\begin{proof}
We only prove property \eqref{weights prop. 4}. Notice that $w\in A_{p,q}(\R^d)\Longleftrightarrow w^q\in A_r(\R^d)$, with $[w]_{A_{p,q}}=[w^q]_{A_r}$, where 
\begin{equation*}
  r:=1+q/p'.
\end{equation*}
The proof of $w^{-p'}\in A_{1+p'/q}(\R^d)$ follows in a similar fashion.
\end{proof}

\begin{theorem}[\cite{BL}, Theorem 5.5.3]\label{thm:SW}
If $q_0,q_1\in[1,\infty)$ and $w_0,w_1$ are two weights, then for all $\theta\in(0,1)$ we have
\begin{equation*}
  [L^{q_0}(w_0),L^{q_1}(w_1)]_\theta=L^q(w),
\end{equation*}
where
\begin{equation}\label{eq:convexity}
  \frac{1}{q}=\frac{1-\theta}{q_0}+\frac{\theta}{q_1},\qquad
  w^{\frac{1}{q}}=w_0^{\frac{1-\theta}{q_0}}w_1^{\frac{\theta}{q_1}}.
\end{equation}
\end{theorem}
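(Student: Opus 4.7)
The plan is to reduce to the classical (unweighted) complex interpolation identity $[L^{q_0},L^{q_1}]_\theta=L^q$ with $\tfrac{1}{q}=\tfrac{1-\theta}{q_0}+\tfrac{\theta}{q_1}$ by transporting along an analytic family of multiplication operators. For $z$ in the closed strip $\overline{S}=\{z\in\C:0\le\Re z\le 1\}$, set
\[
 m(z)(x) := w_0(x)^{(1-z)/q_0}\,w_1(x)^{z/q_1}.
\]
Writing $w_j^a=\exp(a\log w_j)$ for $a\in\C$ and taking moduli gives $|m(it)|=w_0^{1/q_0}$ on $\Re z=0$, $|m(1+it)|=w_1^{1/q_1}$ on $\Re z=1$, and $m(\theta)=w^{1/q}$ at the interpolation point (by the very definition of $w$). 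Consequently, pointwise multiplication by $m(z)$ is an \emph{isometric} isomorphism $L^{q_0}(w_0)\to L^{q_0}$ when $\Re z=0$, $L^{q_1}(w_1)\to L^{q_1}$ when $\Re z=1$, and $L^q(w)\to L^q$ when $z=\theta$.

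For the inclusion $L^q(w)\subset[L^{q_0}(w_0),L^{q_1}(w_1)]_\theta$, I would take $f\in L^q(w)$ of norm one, set $g:=m(\theta)f=w^{1/q}f\in L^q$, and invoke the unweighted interpolation result to obtain, for any $\eps>0$, an admissible function $G$ in the Calder\'on class $\mathcal{F}(L^{q_0},L^{q_1})$ with $G(\theta)=g$ and $\|G(it)\|_{q_0},\|G(1+it)\|_{q_1}\le 1+\eps$. Defining
\[
 F(z):= m(z)^{-1}\,G(z)= w_0^{-(1-z)/q_0}\,w_1^{-z/q_1}\,G(z),
\]
the isometry property yields $\|F(it)\|_{L^{q_0}(w_0)}=\|G(it)\|_{q_0}$, $\|F(1+it)\|_{L^{q_1}(w_1)}=\|G(1+it)\|_{q_1}$, and $F(\theta)=f$, so that $\|f\|_{[L^{q_0}(w_0),L^{q_1}(w_1)]_\theta}\le 1+\eps$. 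The reverse inclusion is symmetric: given $F\in\mathcal{F}(L^{q_0}(w_0),L^{q_1}(w_1))$ with $F(\theta)=f$, the curve $G(z):=m(z)F(z)$ has the same boundary norms in the unweighted couple, so $\|f\|_{L^q(w)}=\|g\|_q\le\|G\|_{\mathcal{F}}=\|F\|_{\mathcal{F}}$.

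The principal obstacle is verifying that $F$ (respectively $G$) genuinely belongs to the Calder\'on class $\mathcal{F}$ for the target couple: one needs continuity and boundedness on $\overline{S}$ valued in the sum space, together with analyticity in the interior. Since the weights $w_0,w_1$ may be unbounded and may approach $0$, the multipliers $m(z)$ are not globally bounded operators and the curve $z\mapsto m(z)^{-1}G(z)$ is not \emph{a priori} an analytic curve in $L^{q_0}(w_0)+L^{q_1}(w_1)$. The standard remedy is to first prove the identity for $G$ of the form $\sum_k c_k(z)\chi_{E_k}$, where the $c_k$ are scalar-valued analytic functions and the $E_k$ partition $\R^d$ into sets on which $w_0$ and $w_1$ are both comparable to constants; on such simple building blocks, analyticity of $F$ is immediate. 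One then uses density of such simple functions in $\mathcal{F}(L^{q_0},L^{q_1})$, together with the uniform boundary estimates, to pass to the limit, and an entirely parallel truncation handles the reverse direction.
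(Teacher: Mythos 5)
The paper does not actually prove Theorem~\ref{thm:SW}; it simply quotes it from Bergh--L\"ofstr\"om \cite{BL}, Theorem 5.5.3. So there is no ``paper's own proof'' to compare with. That said, your argument is essentially the classical Stein--Weiss change-of-density proof, which is also the one given in \cite{BL}. The transference via the analytic family $m(z)=w_0^{(1-z)/q_0}w_1^{z/q_1}$, the three isometry identities on $\Re z=0$, $\Re z=1$, $z=\theta$, and the reduction to the unweighted identity $[L^{q_0},L^{q_1}]_\theta=L^q$ are all correct and are exactly the right skeleton.

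The one place I would push you to tighten is the density step. As stated, ``density of such simple functions in $\mathcal{F}(L^{q_0},L^{q_1})$'' is not a theorem you can quote off the shelf: Calder\'on's lemma (Lemma 4.2.3 in \cite{BL}) says the interpolation norm can be computed using finite sums $\sum_k\phi_k(z)a_k$ with $a_k\in L^{q_0}\cap L^{q_1}$, but you still have to replace the $a_k$ by characteristic functions of sets where $w_0,w_1$ are comparable to constants, and then pass to the limit. A cleaner implementation of your own idea is to truncate from the start: set $A_n=\{n^{-1}\le w_0\le n\}\cap\{n^{-1}\le w_1\le n\}$, run the multiplier transfer for $f_n=f\chi_{A_n}$ (respectively $F_n(z)=F(z)\chi_{A_n}$) where $m(z)^{\pm1}\chi_{A_n}$ is a bounded analytic $L^\infty$-valued family, obtain the two-sided norm identity for the truncated objects, and then use that $\{f_n\}$ is Cauchy in $\|\cdot\|_\theta$ (by the already-proved estimate applied to $f_n-f_m=f\chi_{A_n\setminus A_m}$), completeness of the interpolation space, and its continuous embedding into $L^{q_0}(w_0)+L^{q_1}(w_1)$ to identify the limit with $f$. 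For the reverse inclusion, monotone convergence $\|g\chi_{A_n}\|_{L^q}\uparrow\|g\|_{L^q}$ finishes the job. With this adjustment the proof is complete and matches the standard one.
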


In order to connect Theorem \ref{thm:SW} with the $A_{p,q}(\R^d)$ and $A_{q/p_{-}}(\R^d)\cap RH_{(p_{+}/q)'}(\R^d)$ weights, we need:

\begin{lemma}\label{lem:main1}
Let $1<p_1\leq q_1<\infty$, $1<p\leq q<\infty$, $w_1\in A_{p_1,q_1}(\R^d)$, $w\in A_{p,q}(\R^d)$. Then there exist $1<p_0\leq q_0<\infty$, $w_0\in A_{p_0,q_0}(\R^d)$, and $\theta\in(0,1)$ such that the conclusion of Theorem \ref{thm:SW} holds, i.e.,
\begin{equation*}
  [L^{p_0}({w_0}^{p_0}),L^{p_1}({w_1}^{p_1})]_\theta=L^p(w^p),\qquad[L^{q_0}({w_0}^{q_0}),L^{q_1}({w_1}^{q_1})]_\theta=L^q(w^q)
\end{equation*}
where
\begin{equation*}
  \frac{1}{p}=\frac{1-\theta}{p_0}+\frac{\theta}{p_1},\qquad\frac{1}{q}=\frac{1-\theta}{q_0}+\frac{\theta}{q_1},\qquad
  w=w_0^{1-\theta}w_1^{\theta}.
\end{equation*}
\end{lemma}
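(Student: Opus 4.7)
The plan is to fix a small $\theta\in(0,1)$, define the exponents by the convexity relations
\[
  \frac{1}{p_0}=\frac{1/p-\theta/p_1}{1-\theta},\qquad \frac{1}{q_0}=\frac{1/q-\theta/q_1}{1-\theta},
\]
and set $w_0:=w^{1/(1-\theta)}w_1^{-\theta/(1-\theta)}$, so that $w=w_0^{1-\theta}w_1^{\theta}$. With these choices the two interpolation identities demanded by Theorem~\ref{thm:SW} hold by inspection: for instance $w=w_0^{1-\theta}w_1^{\theta}=(w_0^{q_0})^{(1-\theta)/q_0}(w_1^{q_1})^{\theta/q_1}$, so the output weight of Theorem~\ref{thm:SW} for the $q$-identity is exactly $w^q$, and symmetrically for the $p$-identity. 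The real content of the lemma is thus twofold: (i) for sufficiently small $\theta$, the exponents $p_0,q_0$ lie in $(1,\infty)$ with $p_0\leq q_0$; (ii) $w_0\in A_{p_0,q_0}(\R^d)$.

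Item (i) is handled by continuity: $p_0\to p$ and $q_0\to q$ as $\theta\to 0^+$, so the range constraint holds for small $\theta$, and
\[
  \frac{1}{p_0}-\frac{1}{q_0}=\frac{(1/p-1/q)-\theta(1/p_1-1/q_1)}{1-\theta}\geq 0
\]
for small $\theta$ whenever $p<q$; the boundary case $p=q$ is dealt with by the off-diagonal relation $1/p-1/q=1/p_1-1/q_1$ invoked in the ``moreover'' part of Proposition~\ref{prop:LpvInterm}, which forces equality $p_0=q_0$.

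The core step is (ii), which by Proposition~\ref{prop:weights}\eqref{weights prop. 4} is equivalent to $w_0^{q_0}\in A_{r_0}(\R^d)$ with $r_0:=1+q_0/p_0'$. I would decompose
\[
  w_0^{q_0}=(w^q)^{a}\,(w_1^{-p_1'})^{b},\qquad a:=\frac{q_0}{q(1-\theta)}\to 1,\qquad b:=\frac{q_0\theta}{p_1'(1-\theta)}\to 0,
\]
and, dually, express $w_0^{-p_0'}$ as a product of positive powers of $w^{-p'}$ and $w_1^{q_1}$. All four base weights $w^q$, $w^{-p'}$, $w_1^{q_1}$, $w_1^{-p_1'}$ belong to explicit Muckenhoupt classes by Proposition~\ref{prop:weights}\eqref{weights prop. 4}; moreover, the openness of those classes (Proposition~\ref{prop:weights}\eqref{weights prop. 2}) combined with the reverse H\"older property (Proposition~\ref{prop:weights}\eqref{weights prop. 3}) and the conversion Proposition~\ref{prop:weights}\eqref{weights prop. 5} yields a margin: a slightly super-unit power of each of them still sits in some $A_s$ class. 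A H\"older inequality applied inside the averages $\ave{w_0^{q_0}}_Q$ and $\ave{w_0^{-p_0'}}_Q$, with conjugate exponents chosen to consume precisely this margin, reduces the desired bound to the known $A_p$ testing conditions for the base weights. For $\theta$ small compared with the available slack, the ``correction'' factors coming from $w_1$ contribute only a bounded amount, delivering a uniform bound on $[w_0^{q_0}]_{A_{r_0}}$.

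The main obstacle is precisely the bookkeeping required in this last step: the target class $r_0=1+q_0/p_0'$ itself moves with $\theta$, so the H\"older exponents, the openness parameters of $w^q$ and $w_1^{q_1}$, and the size of $\theta$ have to be balanced simultaneously so that the product of averages assembles into the $A_{r_0}$ shape $\ave{u}_Q\ave{u^{-1/(r_0-1)}}_Q^{r_0-1}$. Once $\theta$ is fixed to make this balance work, $w_0\in A_{p_0,q_0}(\R^d)$, and the interpolation identities follow directly from Theorem~\ref{thm:SW}, completing the proof.
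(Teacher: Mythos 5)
Your proposal is correct and follows essentially the same route as the paper: the same parametrization of $p_0(\theta)$, $q_0(\theta)$, $w_0(\theta)$, the same decomposition of $w_0^{q_0}$ and $w_0^{-p_0'}$ into powers of the four base weights $w^q$, $w^{-p'}$, $w_1^{q_1}$, $w_1^{-p_1'}$ via H\"older inside the averages, and the same exploitation of the Coifman--Fefferman reverse H\"older margin (guaranteed by Proposition \ref{prop:weights}\eqref{weights prop. 4}) with $\theta$ taken small enough that the auxiliary exponents $r(\theta),s(\theta),t(\theta),u(\theta)$ stay below $1+\eta$. Your remark about the boundary case $p=q$ with $p_1<q_1$ is a genuine point of care that the paper's ``obvious by continuity'' elides: the inequality $p_0\leq q_0$ for small $\theta>0$ requires either $p<q$ or the off-diagonal relation $\frac1p-\frac1q=\frac1{p_1}-\frac1{q_1}$, and the latter is exactly what is in force in the setting where the lemma is invoked (Proposition \ref{prop:LpvInterm} and Theorem \ref{thm:Off-dig.extrp.compact}).
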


\begin{proof}
Note that the choice of $\theta\in(0,1)$ determines 
\begin{equation*}
  p_0=p_0(\theta)=\frac{1-\theta}{\frac{1}{p}-\frac{\theta}{p_1}},\quad
  q_0=q_0(\theta)=\frac{1-\theta}{\frac{1}{q}-\frac{\theta}{q_1}},\quad
  w_0=w_0(\theta)=w^{\frac{1}{1-\theta}}w_1^{-\frac{\theta}{1-\theta}},
\end{equation*}
so it remains to check that we can choose $\theta\in(0,1)$ so that $1<p_0\leq q_0<\infty$ and $w_0\in A_{p_0,q_0}(\R^d)$. Since $1<p_0(0)=p\leq q=q_0(0)<\infty$, the first condition is obvious for small enough $\theta>0$ by continuity. 

We need to check that $w_0\in A_{p_0,q_0}(\R^d)$, so we consider a cube $Q$ and write
\begin{equation*}
  \ave{w_0^{q_0}}_Q^{\frac{1}{q_0}}\ave{w_0^{-p_0'}}_Q^{\frac{1}{p_0'}}
  =\ave{w^{\frac{q_0}{1-\theta}}w_1^{-\frac{q_0\cdot\theta}{1-\theta}}}_Q^{\frac{1}{q_0}}
  \ave{w^{-\frac{p_0'}{1-\theta}}w_1^{\frac{p_0'\cdot\theta}{1-\theta}}}_Q^{\frac{1}{p_0'}},
\end{equation*}
where $p_0':=p_0/(p_0-1)$ denotes the conjugate exponent of $p_0$.

In the first average, we use H\"older's inequality with exponents $1+\eps^{\pm 1}$, and in the second with exponents $1+\delta^{\pm 1}$ to get
\begin{equation*}
\begin{split}
  &\leq \ave{w^{\frac{q_0(1+\eps)}{1-\theta}}}_Q^{\frac{1}{q_0(1+\eps)}} \ave{w_1^{-\frac{q_0\theta(1+\eps)}{\eps(1-\theta)}}}_Q^{\frac{\eps}{q_0(1+\eps)}}  \\ &\qquad\times\ave{w^{-\frac{p_0'(1+\delta)}{1-\theta}}}_Q^{\frac{1}{p_0'(1+\delta)}} \ave{w_1^{\frac{p_0'\theta(1+\delta)}{\delta(1-\theta)}}}_Q^{\frac{\delta}{p_0'(1+\delta)}}.
\end{split}
\end{equation*}
If we choose $\eps=\frac{\theta q}{p_1'}$ and $\delta=\frac{\theta p'}{q_1}$, the previous line takes the form
\begin{equation}\label{eq:beforeRHI1}
\begin{split}
  &= \bigg(\ave{w^{\frac{q_0(p_1'+\theta q)}{p_1'(1-\theta)}}}_Q^{p_1'} \ave{w_1^{-p_1's(\theta)}}_Q^{\theta q}\bigg)^\frac{1}{q_0(p_1'+\theta q)}  \\
  &\qquad\times\bigg(\ave{w^{-\frac{p_0'(q_1+\theta p')}{q_1(1-\theta)}}}_Q^{q_1} \ave{ w_1^{q_1 u(\theta)}}_Q^{\theta p'}\bigg)^\frac{1}{p_0'(q_1+\theta p')}  \\
  &=\ave{(w^{q})^{r(\theta)}}_Q^{\frac{1}{r(\theta)q(1-\theta)}} \ave{(w_1^{-p_1'})^{s(\theta)}}_Q^{\frac{\theta q}{q_0(p_1'+\theta q)}}  \\
  &\qquad\times\ave{(w^{-p'})^{t(\theta)}}_Q^{\frac{1}{t(\theta)p'(1-\theta)}} \ave{ (w_1^{q_1})^{u(\theta)}}_Q^{\frac{\theta p'}{p_0'(q_1+\theta p')}}
\end{split}
\end{equation}
where
\begin{equation*}
  r(\theta):=\frac{q_1(p_1'+\theta q)}{p_1'(q_1-\theta q)},\qquad
  s(\theta):=\frac{q_0(\theta)(p_1'+\theta q)}{q p_1'(1-\theta)}
\end{equation*}
and
\begin{equation*}
  t(\theta):=\frac{p_1'(q_1+\theta p')}{q_1(p_1'-\theta p')},\qquad
  u(\theta):=\frac{p_0(\theta)'(q_1+\theta p')}{ p' q_1(1-\theta)}.
\end{equation*}

The strategy to proceed is to use the reverse H\"older inequality for $A_v(\R^d)$ weights due to Coifman--Fefferman \cite{CF}, which says that each $W\in A_v(\R^d)$ satisfies
\begin{equation}\label{eq:RHI}
  \ave{W^t}_Q^{1/t}\lesssim \ave{W}_Q
\end{equation}
for all $t\leq 1+\eta$ and for some $\eta>0$ depending only on $[W]_{A_v}$. (For a sharp quantitative version, see \cite[Theorem 2.3]{HytPer}.)

Recalling that $p_0(0)=p$ and $q_0(0)=q$, we see that $r(0)=s(0)=t(0)=u(0)=1$. By continuity, given any $\eta>0$, we find that 
\begin{equation*}
\max(r(\theta),s(\theta),t(\theta),u(\theta))\leq 1+\eta\,\,\,\text{for all small enough}\,\,\,\theta>0.
\end{equation*}
By property \eqref{weights prop. 4} of Proposition \ref{prop:weights} each of the four functions $w^q\in A_{1+\frac{q}{p'}}(\R^d)$, $w_1^{-p_1'}\in A_{1+p_1'/q_1}(\R^d)$, $w^{-p'}\in A_{1+p'/q}(\R^d)$ and $w_1^{q_1}\in A_{1+q_1/p_1'}(\R^d)$ satisfies the reverse H\"older inequality (\ref{eq:RHI}) for all $t\leq 1+\eta$ and for some $\eta>0$. Thus, for all small enough $\theta>0$, we have
\begin{equation*}
\begin{split}
  \eqref{eq:beforeRHI1}
  &\lesssim  \ave{w^{q}}_Q^{r(\theta)\frac{1}{r(\theta)q(1-\theta)}} \ave{w_1^{-p_1'}}_Q^{s(\theta)\frac{\theta q}{q_0(p_1'+\theta q)}}  \\
  &\qquad\times\ave{w^{-p'}}_Q^{t(\theta)\frac{1}{t(\theta)p'(1-\theta)}} \ave{w_1^{q_1}}_Q^{u(\theta)\frac{\theta p'}{p_0'(q_1+\theta p')}}  \\
  &=\ave{w^{q}}_Q^{\frac{1}{q(1-\theta)}} \ave{w_1^{-p_1'}}_Q^{\frac{\theta}{p_1'(1-\theta)}}
    \ave{w^{-p'}}_Q^{\frac{1}{p'(1-\theta)}} \ave{w_1^{q_1}}_Q^{\frac{\theta}{q_1(1-\theta)}}  \\
  &=\bigg(\ave{w^q}_Q^{\frac{1}{q}} \ave{w^{-p'}}_Q^{\frac{1}{p'}}\bigg)^{\frac{1}{1-\theta}}
    \bigg(\ave{w_1^{q_1}}_Q^{\frac{1}{q_1}} \ave{w_1^{-p_1'}}_Q^{\frac{1}{p_1'}}\bigg)^{\frac{\theta}{1-\theta}}  \\
  &\leq [w]_{A_{p,q}}^{\frac{1}{1-\theta}}[w_1]_{A_{p_1,q_1}}^{\frac{\theta}{1-\theta}}.
\end{split}
\end{equation*}
In combination with the lines preceding \eqref{eq:beforeRHI1}, we have shown that
\begin{equation*}
  [w_0]_{A_{p_0,q_0}}\lesssim  [w]_{A_{p,q}}^{\frac{1}{1-\theta}}[w_1]_{A_{p_1,q_1}}^{\frac{\theta}{1-\theta}}<\infty,
\end{equation*}
provided that $\theta>0$ is small enough. This concludes the proof.
\end{proof}

\begin{lemma}\label{lem:main2}
Let $1\leq p_{-}<p_{+}<\infty$, $q_1\in[p_{-},p_{+}]$, $q\in(p_{-},p_{+})$, and 
\begin{equation*}
  w_1\in A_{q_1/p_{-}}(\R^d)\cap RH_{(p_{+}/q_1)'}(\R^d),\qquad w\in A_{q/p_{-}}(\R^d)\cap
  RH_{(p_{+}/q)'}(\R^d).
\end{equation*}
Then there exists $q_0\in(p_{-},p_{+})$, $w_0\in A_{q_0/p_{-}}(\R^d)\cap RH_{(p_{+}/q_0)'}(\R^d)$, and $\theta\in(0,1)$ such that (\ref{eq:convexity}) holds.
\end{lemma}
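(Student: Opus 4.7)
The plan is to mirror the structure of Lemma \ref{lem:main1}: treat $\theta\in(0,1)$ as a small parameter and show that the ansatz forced by Theorem \ref{thm:SW} and the convexity relation \eqref{eq:convexity}, namely
\begin{equation*}
q_0(\theta):=\frac{1-\theta}{\frac{1}{q}-\frac{\theta}{q_1}},\qquad w_0(\theta):=w^{\frac{q_0}{q(1-\theta)}}\,w_1^{-\frac{\theta q_0}{q_1(1-\theta)}},
\end{equation*}
satisfies the remaining requirements once $\theta>0$ is chosen small enough. Since $q_0(0)=q\in(p_-,p_+)$, the range condition $q_0(\theta)\in(p_-,p_+)$ holds for small $\theta$ by continuity; only the weight-class membership of $w_0$ is nontrivial.

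To verify $w_0\in A_{q_0/p_-}(\R^d)\cap RH_{(p_+/q_0)'}(\R^d)$, I would first record the elementary equivalence
\begin{equation*}
w\in A_p(\R^d)\cap RH_s(\R^d) \iff \sup_Q\ave{w^s}_Q^{1/s}\,\ave{w^{-\frac{1}{p-1}}}_Q^{p-1}<\infty,
\end{equation*}
which follows from two applications of H\"older's inequality and consolidates the two defining conditions into one. Specializing to $p=q_0/p_-$ and $s=(p_+/q_0)'$ (so that $\frac{1}{p-1}=\frac{p_-}{q_0-p_-}$), the verification reduces to bounding
\begin{equation*}
\ave{w_0^{(p_+/q_0)'}}_Q^{1/(p_+/q_0)'}\,\ave{w_0^{-p_-/(q_0-p_-)}}_Q^{(q_0-p_-)/p_-}
\end{equation*}
uniformly in $Q$.

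Substituting the expression for $w_0$, I would apply H\"older's inequality in each of the two averages with conjugate exponents $1+\eps(\theta)$ and $1+\delta(\theta)$, chosen proportional to $\theta$ with coefficients dictated by matching the limiting exponents on the $w$- and $w_1$-factors to $(p_+/q)'$, $-p_-/(q-p_-)$, $(p_+/q_1)'$, and $-p_-/(q_1-p_-)$ respectively. This produces a bound in terms of four averages of the form $\ave{W^{\eta(\theta)}}_Q$ with $\eta(\theta)\to 1$ as $\theta\to 0$, where $W$ is one of $w^{(p_+/q)'}$, $w^{-p_-/(q-p_-)}$, $w_1^{(p_+/q_1)'}$, or $w_1^{-p_-/(q_1-p_-)}$. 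By property \eqref{weights prop. 5} of Proposition \ref{prop:weights} the first and third of these belong to an $A_r(\R^d)$ class, while by property \eqref{weights prop. 1} the second and fourth belong to $A_{q/(q-p_-)}(\R^d)$ and $A_{q_1/(q_1-p_-)}(\R^d)$ respectively. The Coifman--Fefferman reverse H\"older inequality \eqref{eq:RHI} then absorbs each perturbation $\eta(\theta)$ for $\theta$ sufficiently small (depending only on the characteristics of $w$ and $w_1$), and collecting the exponents yields a bound of the form $[w]_{A_{q/p_-}\cap RH_{(p_+/q)'}}^{1/(1-\theta)}\,[w_1]_{A_{q_1/p_-}\cap RH_{(p_+/q_1)'}}^{\theta/(1-\theta)}$, which is finite.

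The main obstacle I expect is the algebraic bookkeeping in the H\"older step: identifying the precise analogues of the choices $\eps=\theta q/p_1'$ and $\delta=\theta p'/q_1$ from Lemma \ref{lem:main1}, and verifying that the four resulting perturbation exponents all converge to $1$ uniformly enough that a single $\eta>0$ from \eqref{eq:RHI} can be used for all four weights simultaneously. Everything else is a routine repackaging of the argument in Lemma \ref{lem:main1} adapted to the combined $A_p\cap RH_s$ characteristic in place of the $A_{p,q}$ characteristic.
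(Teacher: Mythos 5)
Your proposal takes essentially the same route as the paper. The paper reduces via property \eqref{weights prop. 5} of Proposition \ref{prop:weights} to checking that $v_0 = w_0^{(p_+/q_0)'}$ lies in $A_{s_0}$, and your combined quantity $\ave{w_0^{(p_+/q_0)'}}_Q^{1/(p_+/q_0)'}\ave{w_0^{-p_-/(q_0-p_-)}}_Q^{(q_0-p_-)/p_-}$ is exactly the $(p_+/q_0)'$-th root of the paper's $A_{s_0}$ characteristic of $v_0$, so the H\"older and reverse-H\"older steps you describe are applied to the same four weights as in the paper's computation.
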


\begin{proof}
By property \eqref{weights prop. 5} of Proposition \ref{prop:weights} we prove the lemma in its equivalence form:
if $v_1:=w_1^{(p_{+}/q_1)'}\in A_{s_1}(\R^d)$ and $v:=w^{(p_{+}/q)'}\in A_{s}(\R^d)$ then there exists $q_0\in(p_{-},p_{+})$, $v_0:=w_0^{(p_{+}/q_0)'}\in A_{s_0}(\R^d)$, and $\theta\in(0,1)$ such that
\begin{equation*}
  [L^{q_0}(w_0),L^{q_1}(w_1)]_\theta=L^q(w),
\end{equation*}
where
\begin{equation*}
  \frac{1}{q}=\frac{1-\theta}{q_0}+\frac{\theta}{q_1},\qquad
  w^{\frac{1}{q}}=w_0^{\frac{1-\theta}{q_0}}w_1^{\frac{\theta}{q_1}},
\end{equation*}
and
\begin{equation*}
\begin{split}
  &s_1=\bigg(\frac{p_{+}}{q_1}\bigg)'\bigg(\frac{q_1}{p_{-}}-1\bigg)+1,  \\ 
  &s=\bigg(\frac{p_{+}}{q}\bigg)'\bigg(\frac{q}{p_{-}}-1\bigg)+1,  \\ 
  &s_0=\bigg(\frac{p_{+}}{q_0}\bigg)'\bigg(\frac{q_0}{p_{-}}-1\bigg)+1.
\end{split}
\end{equation*}

Note that the choice of $\theta\in(0,1)$ determines both 
\begin{equation*}
  q_0=q_0(\theta)=\frac{1-\theta}{\frac{1}{q}-\frac{\theta}{q_1}},\quad
  w_0=w_0(\theta)=w^{\frac{q_0}{q(1-\theta)}}w_1^{-\frac{q_0\cdot\theta}{q_1(1-\theta)}},
\end{equation*}
so it remains to check that we can choose $\theta\in(0,1)$ so that $q_0\in(p_{-},p_{+})$ and $v_0=w_0^{(p_{+}/q_0)'}\in A_{s_0}(\R^d)$, where $s_0=\big(\frac{p_{+}}{q_0}\big)'\big(\frac{q_0}{p_{-}}-1\big)+1$. Since $q_0(0)=q\in(p_{-},p_{+})$, the first condition is obvious for small enough $\theta>0$ by continuity. 

We need to check that $v_0=w_0^{(p_{+}/q_0)'}\in A_{s_0}(\R^d)$, so we consider a cube $Q$ and write
\begin{equation*}
\begin{split}
  \ave{v_0}_Q\ave{v_0^{-\frac{1}{s_0-1}}}_Q&= \ave{w_0^{(p_{+}/q_0)'}}_Q\ave{w_0^{(p_{+}/q_0)'(-\frac{1}{s_0-1})}}_Q^{s_0-1}  \\
  &= \ave{w^{\frac{q_0(p_{+}/q_0)'}{q(1-\theta)}}w_1^{-\frac{q_0\cdot\theta(p_{+}/q_0)'}{q_1(1-\theta)}}}_Q  \\
  &\qquad\times\ave{w^{-\frac{q_0(p_{+}/q_0)'}{q(1-\theta)(s_0-1)}}w_1^{\frac{q_0\cdot\theta(p_{+}/q_0)'}{q_1(1-\theta)(s_0-1)}}}_Q^{s_0-1}.
\end{split}
\end{equation*}

In the first average, we use H\"older's inequality with exponents $1+\eps^{\pm 1}$, and in the second with exponents $1+\delta^{\pm 1}$ for some small enough $\eps,\delta>0$ to get
\begin{equation}\label{eq:beforeRHI2}
\begin{split}
  &\leq \ave{w^{\frac{q_0(p_{+}/q_0)'(1+\eps)}{q(1-\theta)}}}_Q^\frac{1}{1+\eps} \ave{w_1^{-\frac{q_0\theta(p_{+}/q_0)'(1+\eps)}{q_1\eps(1-\theta)}}}_Q^\frac{\eps}{1+\eps}   \\ 
  &\qquad\times\ave{w^{-\frac{q_0(p_{+}/q_0)'(1+\delta)}{q(1-\theta)(s_0-1)}}}_Q^{\frac{s_0-1}{1+\delta}} \ave{w_1^{\frac{q_0\theta(p_{+}/q_0)'(1+\delta)}{q_1\delta(1-\theta)(s_0-1)}}}_Q^\frac{(s_0-1)\delta}{1+\delta}  \\
  &= \ave{(w^{(p_{+}/q)'})^{\tilde r(\theta)}}_Q^\frac{1}{1+\eps} \ave{(w_1^{(p_{+}/q_1)'(-\frac{1}{s_1-1})})^{\tilde s(\theta)}}_Q^\frac{\eps}{1+\eps}  \\
  &\qquad\times\ave{(w^{(p_{+}/q)'(-\frac{1}{s-1})})^{\tilde t(\theta)}}_Q^{\frac{s_0-1}{1+\delta}} \ave{(w_1^{(p_{+}/q_1)'})^{\tilde u(\theta)}}_Q^\frac{(s_0-1)\delta}{1+\delta}  \\
  &=\ave{v^{\tilde r(\theta)}}_Q^\frac{1}{1+\eps} \ave{(v_1^{-\frac{1}{s_1-1}})^{\tilde s(\theta)}}_Q^\frac{\eps}{1+\eps}  \\
  &\qquad\times\ave{(v^{-\frac{1}{s-1}})^{\tilde t(\theta)}}_Q^{\frac{s_0-1}{1+\delta}} \ave{v_1^{\tilde u(\theta)}}_Q^\frac{(s_0-1)\delta}{1+\delta},
\end{split}
\end{equation}
where
\begin{equation*}
  \tilde r(\theta):=\frac{q_0(\theta)(p_{+}-q)(1+\eps)}{q(1-\theta)(p_{+}-q_0(\theta))},\qquad
  \tilde s(\theta):=\frac{\theta q_0(\theta)(p_{+}-q_1)(s_1-1)(1+\eps)}{q_1\eps(1-\theta)(p_{+}-q_0(\theta))}
\end{equation*}
and
\begin{equation*}
  \tilde t(\theta):=\frac{q_0(\theta)(p_{+}-q)(s-1)(1+\delta)}{q(1-\theta)(s_0(\theta)-1)(p_{+}-q_0(\theta))},
\end{equation*}
\begin{equation*}
   \tilde u(\theta):=\frac{\theta q_0(\theta)(p_{+}-q_1)(1+\delta)}{q_1\delta(1-\theta)(s_0(\theta)-1)(p_{+}-q_0(\theta))}.
\end{equation*}

Now, we choose $\eps=\frac{\theta q(p_{+}-q_1)(s_1-1)}{q_1(p_{+}-q)}$ and $\delta=\frac{\theta q(p_{+}-q_1)}{q_1(p_{+}-q)(s-1)}$ in such a way that
\begin{equation*}
  \tilde r(\theta)=\tilde s(\theta)=\frac{q_0(\theta)(q_1(p_{+}-q)+\theta q(p_{+}-q_1)(s_1-1))}{qq_1(1-\theta)(p_{+}-q_0(\theta))},
\end{equation*}
and
\begin{equation*}
  \tilde t(\theta)=\tilde u(\theta)=\frac{q_0(\theta)(q_1(p_{+}-q)(s-1)+\theta q(p_{+}-q_1))}{qq_1(1-\theta)(s_0(\theta)-1)(p_{+}-q_0(\theta))}.
\end{equation*}

The strategy to proceed is the same as in the proof of Lemma \ref{lem:main1}. In particular, we use the reverse H\"older inequality (\ref{eq:RHI}) for $A_v(\R^d)$ weights.

Recalling that $q_0(0)=q$, we see that $\tilde r(0)=\tilde t(0)=1$. By continuity, given any $\eta>0$, we find that
\begin{equation*}
\max(\tilde r(\theta),\tilde t(\theta))\leq 1+\eta\,\,\,\text{for all small enough}\,\,\,\theta>0.
\end{equation*}

By property \eqref{weights prop. 1} of Proposition \ref{prop:weights} each of the four functions $v\in A_s(\R^d)$, $v^{-\frac{1}{s-1}}\in A_{s'}(\R^d)$, $v_1\in A_{s_1}(\R^d)$ and $v_1^{-\frac{1}{s_1-1}}\in A_{s'_1}(\R^d)$ satisfies the reverse H\"older inequality (\ref{eq:RHI}) for all $t\leq 1+\eta$ and for some $\eta>0$. Thus, for all small enough $\theta>0$, we have
\begin{equation*}
\begin{split}
  \eqref{eq:beforeRHI2}
  &\lesssim  \ave{v}_Q^\frac{q_0(p_{+}-q)}{q(1-\theta)(p_{+}-q_0)} \ave{v_1^{-\frac{1}{s_1-1}}}_Q^\frac{\theta q_0(p_{+}-q_1)(s_1-1)}{q_1(1-\theta)(p_{+}-q_0)}  \\
  &\qquad\times\ave{v^{-\frac{1}{s-1}}}_Q^{\frac{q_0(p_{+}-q)(s-1)}{q(1-\theta)(p_{+}-q_0)}} \ave{v_1}_Q^\frac{\theta q_0(p_+-q_1)}{q_1(1-\theta)(p_{+}-q_0)}  \\
  &=(\ave{v}_Q \ave{v^{-\frac{1}{s-1}}}_Q^{s-1})^\frac{q_0(p_{+}-q)}{q(1-\theta)(p_{+}-q_0)}  \\
  &\qquad\times(\ave{v_1}_Q \ave{v_1^{-\frac{1}{s_1-1}}}_Q^{s_1-1})^\frac{\theta q_0(p_{+}-q_1)}{q_1(1-\theta)(p_{+}-q_0)}  \\
  &\leq[v]_{A_s}^\frac{q_1(p_{+}-q)}{p_{+}(q_1-\theta q)-qq_1(1-\theta)}[v_1]_{A_{s_1}}^\frac{\theta q(p_{+}-q_1)}{p_{+}(q_1-\theta q)-qq_1(1-\theta)}.
\end{split}
\end{equation*}
In combination with the lines preceding \eqref{eq:beforeRHI2}, we have shown that
\begin{equation*}
  [v_0]_{A_{s_0}}\lesssim [v]_{A_s}^\frac{q_1(p_{+}-q)}{p_{+}(q_1-\theta q)-qq_1(1-\theta)}[v_1]_{A_{s_1}}^\frac{\theta q(p_{+}-q_1)}{p_{+}(q_1-\theta q)-qq_1(1-\theta)}<\infty,
\end{equation*}
provided that $\theta>0$ is small enough. This concludes the proof.
\end{proof}

\begin{remark}
Lemma \ref{lem:main2} remains true if $p_{+}=\infty$. In this case the reverse H\"older condition on $w_0$ is vacuous and the proof is the same as in \cite[Lemma 4.3]{Hyt}.
\end{remark}

We now have the last missing ingredients of the proofs of Theorems \ref{thm:Off-dig.extrp.compact} and \ref{thm:limited range extrp.compact}:

\begin{proof}[Proof of Proposition \ref{prop:LpvInterm}]
We are given $1<p\leq q<\infty$, $1<p_1\leq q_1<\infty$, and weights $v\in A_{p,q}(\R^d)$, $v_1\in A_{p_1,q_1}(\R^d)$. By Lemma \ref{lem:main1}, there are some $1<p_0\leq q_0<\infty$, a weight $v_0\in A_{p_0,q_0}(\R^d)$, and $\theta\in(0,1)$ such that
\begin{equation}\label{eq:prop}
  \frac{1}{p}=\frac{1-\theta}{p_0}+\frac{\theta}{p_1},\qquad\frac{1}{q}=\frac{1-\theta}{q_0}+\frac{\theta}{q_1},\qquad
  w=w_0^{1-\theta}w_1^{\theta}.
\end{equation}
By Theorem \ref{thm:SW}, we then have 
\begin{equation*}
[L^{p_0}({v_0}^{p_0}),L^{p_1}({v_1}^{p_1})]_\theta=L^p(v^p)\,\,\,\text{and}\,\,\,[L^{q_0}({v_0}^{q_0}),L^{q_1}({v_1}^{q_1})]_\theta=L^q(v^q).
\end{equation*}
Moreover, by \eqref{eq:prop} the claim of the proposition follows.
\end{proof}

\begin{proof}[Proof of Proposition \ref{prop:limitedrange}]
We are given $1\leq p_{-}<p_{+}<\infty$, $q_1\in[p_{-},p_{+}]$, $q\in(p_{-},p_{+})$, and weights $v\in A_{q/p_{-}}(\R^d)\cap RH_{(p_{+}/q)'}(\R^d)$, $v_1\in A_{q_1/p_{-}}(\R^d)\cap RH_{(p_{+}/q_1)'}(\R^d)$. By Lemma \ref{lem:main2}, there is some $q_0\in(p_{-},p_{+})$, a weight $v_0\in A_{q_0/p_{-}}(\R^d)\cap RH_{(p_{+}/q_0)'}(\R^d)$, and $\theta\in(0,1)$ such that
\begin{equation*}
  \frac{1}{q}=\frac{1-\theta}{q_0}+\frac{\theta}{q_1},\qquad
  w^{\frac{1}{q}}=w_0^{\frac{1-\theta}{q_0}}w_1^{\frac{\theta}{q_1}}.
\end{equation*}
By Theorem \ref{thm:SW}, we then have $L^q(v)=[L^{q_0}(v_0),L^{q_1}(v_1)]_\theta$, as we claimed.
\end{proof}

\section{Commutators of fractional integral operators}\label{comm. fr. int. op.}

All our applications of Theorem \ref{thm:Off-dig.extrp.compact} deals with commutators of the form
\begin{equation*}
  [b,T]:f\mapsto bT(f)-T(bf),
\end{equation*}
where the pointwise multiplier $b$ belongs to the space
\begin{equation*}
  \BMO(\R^d):=\Big\{f:\R^d\to\C\ \Big|\ \Norm{f}{\BMO}:=\sup_Q\ave{\abs{f-\ave{f}_Q}}_Q<\infty\Big\}
\end{equation*}
 of functions of bounded mean oscillation, or its subspace
\begin{equation*}
  \CMO(\R^d):=\overline{C_c^\infty(\R^d)}^{\BMO(\R^d)},
\end{equation*}
where the closure is in the $\BMO$ norm. In our first application, we will apply Theorem \ref{thm:Off-dig.extrp.compact} to the commutator $[b,I_\alpha]$, where given $0<\alpha<d$ the fractional integral operator or Riesz potential $I_\alpha$ is defined by
\begin{equation*}
  I_\alpha f(x)=\int_{\R^d}\frac{f(y)}{|x-y|^{d-\alpha}}dy.
\end{equation*}
The weighted norm inequalities for $I_\alpha$ were obtained by Muckenhoupt--Wheeden \cite{MW:74} and the sharp behavior in terms of the weight constants by Lacey--Moen--P\'erez--Torres \cite{Lacey2010}. The commutators of fractional integral operators and $\BMO$ functions were first studied by Chanillo \cite{SC1982}. In \cite{ST:91}, Segovia--Torrea obtained the following weighted commutator result (see Cruz-Uribe and Moen \cite{CM2012} for a sharp quantitative version):

\begin{theorem}[\cite{ST:91}]\label{thm:linear-fractional}
Fix $0<\alpha<d$, $1<p<d/\alpha$, and  $1/p-1/q=\alpha/d$. Suppose also that $b\in\BMO(\R^d)$. Then $[b,I_\alpha]:L^p(w^p)\to L^q(w^q)$ is a bounded linear operator for all $w\in A_{p,q}(\R^d)$.
\end{theorem}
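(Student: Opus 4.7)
My plan is to prove Theorem~\ref{thm:linear-fractional} via the classical Coifman--Rochberg--Weiss conjugation trick, which deduces the weighted commutator bound from the weighted bound for $I_\alpha$ itself. As the starting point I would recall from Muckenhoupt--Wheeden \cite{MW:74} that $I_\alpha:L^p(v^p)\to L^q(v^q)$ is bounded for every $v\in A_{p,q}(\R^d)$, with operator norm controlled by an increasing function of $[v]_{A_{p,q}}$. Fixing $w\in A_{p,q}(\R^d)$ and $b\in\BMO(\R^d)$, I would introduce the analytic family
\begin{equation*}
  T_z f:=e^{zb}\,I_\alpha(e^{-zb}f),\qquad z\in\C,
\end{equation*}
whose formal derivative at $z=0$ recovers $[b,I_\alpha]f=bI_\alpha f-I_\alpha(bf)$.

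Next I would argue that for all $|z|\leq\delta$, with $\delta$ small enough depending only on $d,p,q,\|b\|_{\BMO}$ and $[w]_{A_{p,q}}$, the twisted weight $w_z:=e^{(\Re z)b}\,w$ still lies in $A_{p,q}(\R^d)$ with a uniform characteristic. Via property~\eqref{weights prop. 4} of Proposition~\ref{prop:weights}, this reduces to showing that $w_z^q=e^{q(\Re z)b}\,w^q$ is an $A_{1+q/p'}(\R^d)$ weight uniformly in $z$; since $w^q\in A_{1+q/p'}(\R^d)$ and $b\in\BMO$, the classical stability of the $A_r$ classes under multiplication by exponentials of $\BMO$ functions (a standard consequence of the John--Nirenberg inequality) delivers the claim. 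Granted this, Muckenhoupt--Wheeden yields
\begin{equation*}
  \|T_z f\|_{L^q(w^q)}=\|I_\alpha(e^{-zb}f)\|_{L^q(w_z^q)}\lesssim\|e^{-zb}f\|_{L^p(w_z^p)}=\|f\|_{L^p(w^p)}
\end{equation*}
uniformly for $|z|\leq\delta$.

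Finally, for $f$ bounded and compactly supported, $z\mapsto T_z f$ is a holomorphic $L^q(w^q)$-valued map on the disk $\{|z|<\delta\}$ (using John--Nirenberg once more to ensure that $e^{-zb}f\in L^p(w_z^p)$ throughout the disk), and the Cauchy integral formula
\begin{equation*}
  [b,I_\alpha]f=\frac{1}{2\pi\img}\oint_{|z|=\delta}\frac{T_z f}{z^2}\ud z
\end{equation*}
then yields $\|[b,I_\alpha]f\|_{L^q(w^q)}\lesssim\delta^{-1}\|f\|_{L^p(w^p)}$; a density argument extends the inequality to all of $L^p(w^p)$. The main technical obstacle is the uniform quantitative $A_{p,q}$-stability of $w_z$ as $z$ varies over a disk; this is exactly where the precise version of the John--Nirenberg inequality --- combined with the $A_{p,q}\Leftrightarrow A_{1+q/p'}$ reduction recorded in property~\eqref{weights prop. 4} of Proposition~\ref{prop:weights} --- does the real work, all other steps being soft.
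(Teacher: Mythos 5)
The paper states Theorem~\ref{thm:linear-fractional} as a citation from Segovia--Torrea \cite{ST:91} and does not reproduce a proof, so there is no paper-internal argument to compare against. That said, your Coifman--Rochberg--Weiss conjugation argument is correct and is in fact the mechanism behind Theorem~\ref{thm:commutators lim. range}, which the paper itself describes as ``a generalized version of the classical theorem of Coifman--Rochberg--Weiss'' and then applies to $T_{K_\alpha}$ to get Corollary~\ref{coro:rho type fractional}; applying that same general theorem to $I_\alpha$, with the Muckenhoupt--Wheeden $A_{p,q}$ bound as input, yields Theorem~\ref{thm:linear-fractional} immediately, so your proposal reproduces the proof of the relevant special case of that result. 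You correctly isolate the real content, namely the uniform $A_{p,q}$-stability of $e^{tb}w$ for small $|t|$, reduced via property~\eqref{weights prop. 4} of Proposition~\ref{prop:weights} to $A_{1+q/p'}$-stability of $e^{sb}w^q$, which is the John--Nirenberg/reverse-H\"older fact. Two small points to nail down in a full writeup: (i) for $f\in L^\infty_c$ the $L^q(w^q)$-valued holomorphy of $z\mapsto T_z f$ on $\{|z|<\delta\}$ needs a short verification (e.g.\ weak holomorphy against $L^{q'}(w^{-q'})$ followed by the standard weak-implies-strong argument), not merely the uniform finiteness of $\|T_zf\|_{L^q(w^q)}$; (ii) the radius $\delta$ must be chosen depending only on $d,p,q,\|b\|_{\BMO},[w]_{A_{p,q}}$, which your stability step does deliver, so that the Cauchy bound $\|[b,I_\alpha]f\|_{L^q(w^q)}\lesssim\delta^{-1}\|f\|_{L^p(w^p)}$ is quantitatively what one wants.
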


For the application of our Theorem \ref{thm:Off-dig.extrp.compact} we need the result of Wang \cite{W:58} about the compactness of the commutator $[b,I_\alpha]$:

\begin{theorem}[\cite{W:58}]\label{thm:compact-fractional}
If $b\in\CMO(\R^d)$, then $[b,I_\alpha]:L^p(\R^d)\to L^q(\R^d)$ is a compact operator, where $0<\alpha<d$, $1<p<d/\alpha$, and  $1/p-1/q=\alpha/d$.
\end{theorem}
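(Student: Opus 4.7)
The strategy is to combine a density reduction with the Kolmogorov--Riesz precompactness criterion in $L^q(\R^d)$. The assignment $b\mapsto[b,I_\alpha]$ is linear, and by the unweighted case of Theorem \ref{thm:linear-fractional} it is bounded from $\BMO(\R^d)$ into $\bddlin(L^p(\R^d),L^q(\R^d))$. Since the compact operators form a closed subspace of $\bddlin(L^p,L^q)$ and $\CMO(\R^d)$ is by definition the $\BMO$-closure of $C_c^\infty(\R^d)$, it suffices to show that $[b,I_\alpha]$ is compact from $L^p(\R^d)$ to $L^q(\R^d)$ for every fixed $b\in C_c^\infty(\R^d)$.

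For such $b$, with $\supp b\subset B(0,R_0)$ and $\Norm{\nabla b}{L^\infty}<\infty$, the plan is to verify that $\{[b,I_\alpha]f:\Norm{f}{L^p}\le 1\}$ is precompact in $L^q$ by checking the three Kolmogorov--Riesz conditions: $L^q$-boundedness, uniform smallness of tails, and uniform equicontinuity under translation. Boundedness is inherited from the bounded-commutator estimate. For the tail condition, I would use that $b(x)=0$ for $|x|>R_0$, so for $|x|>2R_0$ one has $[b,I_\alpha]f(x)=-I_\alpha(bf)(x)$ and $|x-y|\gtrsim|x|$ whenever $y\in\supp b$; this yields the pointwise bound $|[b,I_\alpha]f(x)|\lesssim|x|^{-(d-\alpha)}\Norm{b}{L^{p'}}\Norm{f}{L^p}$, and the $L^q$-norm over $\{|x|>R\}$ tends to $0$ as $R\to\infty$ because the constraint $1/p-1/q=\alpha/d$ combined with $p>1$ forces $q(d-\alpha)>d$.

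The main obstacle is the equicontinuity of translates. Writing
\begin{equation*}
  [b,I_\alpha]f(x)=\int_{\R^d}\frac{b(x)-b(y)}{|x-y|^{d-\alpha}}f(y)\ud y,
\end{equation*}
I would split the difference $[b,I_\alpha]f(\cdot+h)-[b,I_\alpha]f$ into contributions from the near region $|x-y|\le 2|h|$ and the far region $|x-y|>2|h|$. On the near region the Lipschitz bound $|b(x)-b(y)|\le\Norm{\nabla b}{L^\infty}|x-y|$ reduces the estimate to a truncated convolution with kernel $|z|^{\alpha+1-d}\chi_{|z|<2|h|}$, which lies in $L^r(\R^d)$ for $r=d/(d-\alpha)$ with norm $\lesssim|h|$, so Young's inequality produces a bound of order $|h|\Norm{f}{L^p}$ in $L^q$. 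On the far region I would combine the smoothness of $b$ with the mean-value estimate $\bigl||x+h-y|^{\alpha-d}-|x-y|^{\alpha-d}\bigr|\lesssim|h|\,|x-y|^{\alpha-d-1}$ to dominate the kernel increment by $|h|$ times a sum of Riesz-type kernels of orders $\alpha$ and $\alpha-1$; these are controlled in $L^q$ via the Hardy--Littlewood--Sobolev inequality together with the compact support of $b$, which allows one to further localize the $y$-integration so that the borderline order-$(\alpha-1)$ piece remains integrable and contributes a power of $|h|$. Summing the near- and far-region estimates yields $\Norm{[b,I_\alpha]f(\cdot+h)-[b,I_\alpha]f}{L^q}\to 0$ as $|h|\to 0$, uniformly for $\Norm{f}{L^p}\le 1$, which completes the Kolmogorov--Riesz verification and hence the compactness.
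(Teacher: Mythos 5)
The paper does not prove this statement; it is quoted from Wang \cite{W:58}, so there is no internal argument to compare against, and your proposal has to stand on its own. Your overall plan --- reduce to $b\in C_c^\infty(\R^d)$ via the boundedness of $b\mapsto[b,I_\alpha]$ from $\BMO$ to $\bddlin(L^p,L^q)$ and the closedness of compact operators in operator norm, then verify the three Kolmogorov--Riesz conditions --- is the standard route and is what \cite{W:58} does. The density reduction, the $L^q$ bound, the tail estimate (the exponent check $q(d-\alpha)>d\Leftrightarrow p>1$ is correct), and the near-region Young estimate with the truncated kernel $|z|^{\alpha+1-d}\chi_{|z|<2|h|}\in L^{d/(d-\alpha)}$ of norm $\sim|h|$ are all fine.

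There is, however, a genuine wrinkle in your far-region argument. After splitting
\begin{equation*}
  \frac{b(x+h)-b(y)}{|x+h-y|^{d-\alpha}}-\frac{b(x)-b(y)}{|x-y|^{d-\alpha}}
  =\frac{b(x+h)-b(x)}{|x+h-y|^{d-\alpha}}+\bigl(b(x)-b(y)\bigr)\Bigl(\frac{1}{|x+h-y|^{d-\alpha}}-\frac{1}{|x-y|^{d-\alpha}}\Bigr),
\end{equation*}
your treatment of the second summand as ``$|h|$ times an order-$(\alpha-1)$ Riesz kernel, rendered harmless by the compact support of $b$'' does not close. Restricting $y$ to $\supp b$ and $x$ to a ball only gives a kernel $|h|\,|z|^{\alpha-d-1}\chi_{2|h|<|z|<C}$, and a direct computation shows $\bnorm{|z|^{\alpha-d-1}\chi_{2|h|<|z|<C}}{L^{d/(d-\alpha)}}\sim|h|^{-1}$; the factors of $|h|$ cancel and you obtain an $O(1)$ bound, not $o(1)$ as $|h|\to0$. (Equivalently, $I_{\alpha-1}$ maps $L^p$ into $L^{q'}$ with $q'<q$, and localization in $x$ does not upgrade $L^{q'}_{\loc}$ to $L^q_{\loc}$.) The correct and simpler fix is to use the global Lipschitz bound $|b(x)-b(y)|\le\Norm{\nabla b}{L^\infty}|x-y|$ on the \emph{second} summand as well, not merely $|b(x)-b(y)|\lesssim 1$: combined with the mean-value estimate this gives a bound $\lesssim|h|\,|x-y|^{\alpha-d}$, i.e.\ $|h|$ times the genuine order-$\alpha$ Riesz kernel, and Hardy--Littlewood--Sobolev then yields the desired $\lesssim|h|\,\Norm{f}{L^p}$ in $L^q$. (Alternatively, split the far region further at $|x-y|=1$, using Lipschitz below and the $L^\infty$ bound together with $|z|^{\alpha-d-1}\chi_{|z|>1}\in L^{d/(d-\alpha)}$ above.) With this correction your proof is complete and matches the standard argument; the compact support of $b$ is genuinely needed only for the tail condition, not for equicontinuity.
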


A combination of Theorems \ref{thm:Off-dig.extrp.compact}, \ref{thm:linear-fractional} and \ref{thm:compact-fractional} immediately gives a quick proof of the following recent result of Wu--Yang \cite{WY:2018}:

\begin{corollary}[\cite{WY:2018}, Theorem 1.3]
Let $\alpha\in(0,d),p,q\in(1,\infty)$ with $\frac{1}{p}=\frac{1}{q}+\frac{\alpha}{d},w\in A_{p,q}(\R^d)$ and $b\in\CMO(\R^d)$. Then the commutator $[b,I_\alpha]$ is compact from $L^p(w^p)$ to $L^q(w^q)$.
\end{corollary}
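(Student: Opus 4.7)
The plan is to recognize this corollary as a direct consequence of Theorem \ref{thm:Off-dig.extrp.compact} applied to the linear operator $T=[b,I_\alpha]$, so the role of the proof is simply to verify that all the hypotheses of that theorem are in force.

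First, I would fix the exponents as in the statement: $0<\alpha<d$ and $1<p<q<\infty$ with $\frac{1}{p}-\frac{1}{q}=\frac{\alpha}{d}$. I would then let $p_1:=p$ and $q_1:=q$ in Theorem \ref{thm:Off-dig.extrp.compact}, so that the off-diagonal relation $\frac{1}{p}-\frac{1}{q}=\frac{1}{p_1}-\frac{1}{q_1}$ is satisfied trivially. The weighted boundedness hypothesis of Theorem \ref{thm:Off-dig.extrp.} -- namely that $T:L^{p_1}(\tilde w^{p_1})\to L^{q_1}(\tilde w^{q_1})$ for every $\tilde w\in A_{p_1,q_1}(\R^d)$ -- is exactly the content of Theorem \ref{thm:linear-fractional}, which applies since $b\in\CMO(\R^d)\subset\BMO(\R^d)$.

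Next, I would verify the one-weight compactness hypothesis at the distinguished weight $w_1\equiv 1$. Since the constant weight $1$ lies in $A_{p_1,q_1}(\R^d)$, it suffices to produce a single $w_1\in A_{p_1,q_1}$ for which $T:L^{p_1}(w_1^{p_1})\to L^{q_1}(w_1^{q_1})$ is compact; the unweighted compactness $[b,I_\alpha]:L^p(\R^d)\to L^q(\R^d)$ provided by Theorem \ref{thm:compact-fractional} (for $b\in\CMO(\R^d)$) is precisely this statement with $w_1\equiv 1$.

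With both the full weighted $L^{p_1}\to L^{q_1}$ boundedness at every $\tilde w\in A_{p_1,q_1}$ and the single-weight compactness at $w_1\equiv 1$ established, an immediate application of Theorem \ref{thm:Off-dig.extrp.compact} yields compactness of $[b,I_\alpha]:L^p(w^p)\to L^q(w^q)$ for every $w\in A_{p,q}(\R^d)$, as claimed. There is no serious obstacle in this argument; the substantive work has already been done inside the extrapolation theorem, so the proof reduces to a short bookkeeping exercise matching hypotheses to conclusions.
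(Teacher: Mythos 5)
Your proposal is correct and follows exactly the paper's argument: verify the weighted off-diagonal boundedness from Theorem \ref{thm:linear-fractional}, obtain the single-weight compactness at $w_1\equiv 1$ from Theorem \ref{thm:compact-fractional}, and conclude by Theorem \ref{thm:Off-dig.extrp.compact}. The only cosmetic difference is that you choose $p_1=p$, $q_1=q$ outright, while the paper fixes an arbitrary admissible $(p_1,q_1)$ on the same off-diagonal line; this is immaterial.
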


\begin{proof}
Let us fix some $p_1,q_1\in(1,\infty)$ for which we verify the assumptions of Theorem \ref{thm:Off-dig.extrp.compact} for $[b,I_\alpha]$ in place of $T$: By Theorem \ref{thm:linear-fractional}, $[b,I_\alpha]$ is a bounded operator from $L^{p_1}(\tilde w^{p_1})$ to $L^{q_1}(\tilde w^{q_1})$ for all $1<p_1\leq q_1<\infty$ such that $\frac{1}{p}-\frac{1}{q}=\frac{1}{p_1}-\frac{1}{q_1}=\frac{\alpha}{d}$ and all $\tilde w\in A_{p_1,q_1}(\R^d)$. By Theorem \ref{thm:compact-fractional}, $[b,I_\alpha]$ is a compact operator from $L^{p_1}(\R^d)=L^{p_1}(w_1^{p_1})$ to $L^{q_1}(\R^d)=L^{q_1}(w_1^{q_1})$ with $w_1\equiv 1\in A_{p_1,q_1}(\R^d)$. Thus the assumptions, and hence the conclusion, of Theorem \ref{thm:Off-dig.extrp.compact} hold for the operator $[b,I_\alpha]$ in place of $T$, and this is what we wanted.
\end{proof}

The original proof in \cite{WY:2018} relied on verifying the weighted Fr\'echet--Kolmog\-orov criterion \cite{ClopCruz}, providing a sufficient condition for compactness in $L^p(w)$. This is avoided by the aforementioned argument.

Consider now, for $\alpha\in(0,d)$, the so-called {\em $\rho$-type fractional integral operator} defined by
\begin{equation*}
  T_{K_{\alpha}}f(x)=\int_{\R^d}K_{\alpha}(x,y)f(y)dy,\qquad x\notin\supp f,
\end{equation*}
with kernel $K_{\alpha}$ satisfying the size condition
\begin{equation*}
  |K_\alpha(x,y)|\lesssim\frac{1}{|x-y|^{d-\alpha}},
\end{equation*}
and the smooth condition
\begin{equation*}
  |K_{\alpha}(x,y)-K_{\alpha}(z,y)|+|K_{\alpha}(y,x)-K_{\alpha}(y,z)|\leq\rho\bigg(\frac{|x-z|}{|x-y|}\bigg)\frac{1}{|x-y|^{d-\alpha}},
\end{equation*}
for all $x,z,y\in\R^d$ such that $|x-y|>2|x-z|$, where $\rho:[0,1]\to[0,\infty)$ is a modulus of continuity, that is, $\rho$ is a continuous, increasing, subadditive function with $\rho(0)=0$ and satisfies the following Dini condition:
\begin{equation*}
  \int_{0}^{1}\rho(t)\frac{dt}{t}<\infty.
\end{equation*}

By observing that $\abs{T_{K_\alpha}(f)}\lesssim I_{\alpha}(|f|)$ and applying the result of Muckenho\-upt--Wheeden \cite{MW:74} to the operator $I_{\alpha}(|f|)$ we have that $T_{K_\alpha}$ is bounded from $L^p(w^p)$ to $L^q(w^q)$ for all $1<p\leq q<\infty$ such that $\frac{1}{p}-\frac{1}{q}=\frac{\alpha}{d}$ and all weights $w\in A_{p,q}(\R^d)$. We extend this result to the commutator $[b,T_{K_\alpha}]$ by recalling the following result of B\'enyi--Martell--Moen--Stachura--Torres \cite{BMMST} (this is a generalized version of the classical theorem of Coifman--Rochberg--Weiss \cite{CRW1976}):

\begin{theorem}[\cite{BMMST}, Theorem 3.22]\label{thm:commutators lim. range}
Let $T$ be a linear operator. Fix $1\leq p, q<\infty$. Suppose also that $T:L^p(w^p)\to L^q(w^q)$ is bounded for all $w\in A_{p,q}(\R^d)$ and $b\in\BMO(\R^d)$. Then $[b,T]$ is a bounded operator from $L^p(w^p)$ to $L^q(w^q)$.
\end{theorem}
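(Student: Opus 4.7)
The plan is the classical exponential-conjugation trick due to Coifman--Rochberg--Weiss. For $z\in\C$, define $F(z) := e^{zb}\,T(e^{-zb} f)$, which satisfies $F(0) = Tf$ and, formally, $F'(0) = [b,T]f$. The strategy is to bound $\|F(z)\|_{L^q(w^q)}$ uniformly for $z$ on a small circle $|z| = \eps$ around the origin, so that the Cauchy integral formula
$$[b,T]f \;=\; \frac{1}{2\pi\img}\oint_{|z|=\eps}\frac{F(z)}{z^2}\,\ud z$$
yields $\|[b,T]f\|_{L^q(w^q)} \leq \eps^{-1}\sup_{|z|=\eps}\|F(z)\|_{L^q(w^q)}$. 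To estimate $F(z)$, I would set $v_z := w\,e^{\Re(z)\,b}$; a pointwise computation produces $\|e^{zb} g\|_{L^q(w^q)} = \|g\|_{L^q(v_z^q)}$ and $\|e^{-zb} f\|_{L^p(v_z^p)} = \|f\|_{L^p(w^p)}$, so that $\|F(z)\|_{L^q(w^q)} = \|T(e^{-zb}f)\|_{L^q(v_z^q)}$, and provided $v_z \in A_{p,q}(\R^d)$ the standing hypothesis applied with the weight $v_z$ yields $\|F(z)\|_{L^q(w^q)} \leq C([v_z]_{A_{p,q}})\,\|f\|_{L^p(w^p)}$.

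The main obstacle is therefore the \emph{stability} claim: $v_z \in A_{p,q}(\R^d)$ with $[v_z]_{A_{p,q}}$ bounded uniformly for $|z| \leq \eps$, where $\eps$ may depend on $\|b\|_{\BMO}$ and $[w]_{A_{p,q}}$. By property \eqref{weights prop. 4} of Proposition \ref{prop:weights}, this is equivalent to $w^q e^{q\Re(z)\,b}\in A_{1+q/p'}(\R^d)$ with controlled constant, which is the classical statement that Muckenhoupt classes are stable under sufficiently small $\BMO$-exponential perturbations---a direct consequence of the John--Nirenberg inequality. This is the only step of the argument that genuinely uses weight theory; all the rest is formal.

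It remains to justify the Cauchy integral formula, namely that $z\mapsto F(z)$ is analytic as an $L^q(w^q)$-valued map on a neighborhood of $0$ and that its derivative at $0$ agrees with $[b,T]f$. The formal power series $F(z) = \sum_{k\geq 0} \frac{z^k}{k!} T_k f$, with $T_0=T$ and $T_k = [b, T_{k-1}]$, converges in $L^q(w^q)$ for $|z|<\eps$ by the preceding uniform bound. To make the manipulation fully rigorous when $b$ is unbounded, one first truncates to $b_n := \max(-n,\min(n,b))$ (which preserves the $\BMO$ norm up to an absolute constant and makes $e^{\pm z b_n}$ genuine multipliers on $L^p$), verifies the estimate for $[b_n, T]$ with constants independent of $n$, and then passes to the limit on a dense class of $f$ (e.g.\ $f\in C_c^\infty(\R^d)$); the resulting bound $\|[b,T]f\|_{L^q(w^q)}\lesssim \eps^{-1}\|f\|_{L^p(w^p)}$ extends to all $f\in L^p(w^p)$ by density.
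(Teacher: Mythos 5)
The paper cites this result from \cite{BMMST} without reproducing a proof, so there is no in-paper argument to compare against; your proposal faithfully reconstructs the Coifman--Rochberg--Weiss exponential-conjugation and Cauchy-integral argument, which is precisely the route taken in the cited reference. The three ingredients you isolate --- the isometry identities $\|e^{zb}g\|_{L^q(w^q)}=\|g\|_{L^q(v_z^q)}$ and $\|e^{-zb}f\|_{L^p(v_z^p)}=\|f\|_{L^p(w^p)}$ with $v_z=w\,e^{\Re(z)\,b}$, the John--Nirenberg stability of the $A_{p,q}$ class under small $\BMO$-exponential perturbations (equivalently, of $A_{1+q/p'}$ via property \eqref{weights prop. 4} of Proposition \ref{prop:weights}), and the truncation $b\mapsto b_n$ to justify analyticity of $z\mapsto F(z)$ --- are exactly the content of the argument in \cite{BMMST}. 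One small point worth being explicit about: the hypothesis as stated is qualitative, whereas the Cauchy-integral step needs a uniform bound $\sup_{|z|=\eps}\|T\|_{L^p(v_z^p)\to L^q(v_z^q)}<\infty$; you implicitly use (as \cite{BMMST} does) that the operator norm is controlled by a non-decreasing function of $[v_z]_{A_{p,q}}$, which is the quantitative form implicit throughout this circle of extrapolation results, so this is consistent with the intended reading rather than a gap.
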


By applying Theorem \ref{thm:commutators lim. range} to the operator $T_{K_\alpha}$ in place of $T$, the following weighted boundedness result for the commutator $[b,T_{K_\alpha}]$ is automatically valid:

\begin{corollary}\label{coro:rho type fractional}
Fix $0<\alpha<d$, $1<p<d/\alpha$ and $1/p-1/q=\alpha/d$. Suppose also that $b\in\BMO(\R^d)$. Then $[b,T_{K_\alpha}]:L^p(w^p)\to L^q(w^q)$ is a bounded linear operator for all $w\in A_{p,q}(\R^d)$.
\end{corollary}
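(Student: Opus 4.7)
The plan is to proceed in two short steps: first establish the (un-commutated) weighted boundedness of $T_{K_\alpha}$ from $L^p(w^p)$ to $L^q(w^q)$, and then invoke Theorem \ref{thm:commutators lim. range} as a black box to pass to the commutator $[b, T_{K_\alpha}]$. No interpolation or compactness machinery from Section \ref{sec:main results} is needed for this corollary; only the size hypothesis on the kernel is used.

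For the first step, observe that the size condition $|K_\alpha(x,y)|\lesssim |x-y|^{\alpha-d}$ gives the pointwise majorization
\begin{equation*}
  |T_{K_\alpha} f(x)| \;\lesssim\; \int_{\R^d}\frac{|f(y)|}{|x-y|^{d-\alpha}}\,dy \;=\; I_\alpha(|f|)(x)
\end{equation*}
for $x\notin\supp f$ (and hence, after a standard limiting/density argument, for a.e.\ $x\in\R^d$). Combined with the Muckenhoupt--Wheeden theorem \cite{MW:74} for the Riesz potential $I_\alpha$, this yields that $T_{K_\alpha}: L^p(w^p)\to L^q(w^q)$ is bounded whenever $0<\alpha<d$, $1<p<d/\alpha$, $1/p-1/q=\alpha/d$, and $w\in A_{p,q}(\R^d)$.

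For the second step, apply Theorem \ref{thm:commutators lim. range} with $T = T_{K_\alpha}$ and the fixed exponents $p,q$ from the statement. The hypotheses of that theorem — namely that $T_{K_\alpha}: L^p(w^p)\to L^q(w^q)$ is bounded for every $w\in A_{p,q}(\R^d)$ and that $b\in\BMO(\R^d)$ — are exactly what we have just verified and what is assumed. The conclusion of Theorem \ref{thm:commutators lim. range} is then precisely the claim of Corollary \ref{coro:rho type fractional}.

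In this argument there is no real obstacle: the only step with any content is the pointwise domination by $I_\alpha$, which is immediate from the size bound on $K_\alpha$. It is worth noting that neither the Dini-type smoothness of $K_\alpha$ nor the modulus of continuity $\rho$ enters this proof — those regularity hypotheses on the kernel will play their role only later, when upgrading boundedness of $[b,T_{K_\alpha}]$ to compactness via Theorem \ref{thm:Off-dig.extrp.compact}, since one needs unweighted compactness of the commutator as an input, and such compactness results typically require kernel regularity.
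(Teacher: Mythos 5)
Your proof is correct and is exactly the argument given in the paper: first use the size condition $|K_\alpha(x,y)|\lesssim|x-y|^{\alpha-d}$ to dominate $|T_{K_\alpha}f|$ pointwise by $I_\alpha(|f|)$ and invoke Muckenhoupt--Wheeden to get the weighted off-diagonal boundedness of $T_{K_\alpha}$, then apply Theorem \ref{thm:commutators lim. range} to pass to the commutator. Your parenthetical remark that the Dini-type smoothness of $K_\alpha$ plays no role at this stage is also consistent with how the paper uses it.
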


The compactness result about the commutator $[b,T_{K_\alpha}]$ is due to Guo--Wu--Yang \cite{GWY}:

\begin{theorem}[\cite{GWY}, Theorem 1.5]\label{thm:cmpt. of rho type fractional}
Let $w\in A_{p,q}(\R^d)$, $1<p,q<\infty$, $0<\alpha<d$, $1/q=1/p-\alpha/d$. If $b\in\CMO(\R^d)$, then $[b,T_{K_\alpha}]$ is a compact operator from $L^p(w^p)$ to $L^q(w^q)$.
\end{theorem}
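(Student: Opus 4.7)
The plan is to apply Theorem \ref{thm:Off-dig.extrp.compact} to the commutator $[b, T_{K_\alpha}]$ in place of $T$, following exactly the template that gave a quick proof of the Wu--Yang result for $[b, I_\alpha]$. Concretely, I would fix some pair $p_1, q_1 \in (1, \infty)$ with $1 < p_1 < d/\alpha$ and $1/p_1 - 1/q_1 = \alpha/d$ (for instance, pick $p_1$ close to $1$ and let $q_1$ be determined), and verify the two hypotheses of Theorem \ref{thm:Off-dig.extrp.compact}: (a) weighted boundedness of $[b,T_{K_\alpha}]$ from $L^{p_1}(\tilde w^{p_1})$ to $L^{q_1}(\tilde w^{q_1})$ for every $\tilde w \in A_{p_1,q_1}(\R^d)$, and (b) compactness from $L^{p_1}(w_1^{p_1})$ to $L^{q_1}(w_1^{q_1})$ for \emph{some} $w_1 \in A_{p_1,q_1}(\R^d)$.

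Hypothesis (a) is immediate from Corollary \ref{coro:rho type fractional}, which already delivers weighted boundedness of $[b,T_{K_\alpha}]$ for every $b \in \BMO(\R^d) \supset \CMO(\R^d)$. For (b) the natural choice is $w_1 \equiv 1 \in A_{p_1,q_1}(\R^d)$, reducing the task to showing that $[b,T_{K_\alpha}] : L^{p_1}(\R^d) \to L^{q_1}(\R^d)$ is compact.

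For the unweighted compactness I would use the density of $C_c^\infty(\R^d)$ in $\CMO(\R^d)$: pick $b_n \in C_c^\infty(\R^d)$ with $\Norm{b - b_n}{\BMO} \to 0$. By Theorem \ref{thm:commutators lim. range} applied with weight $\equiv 1$, one has
\begin{equation*}
  \Norm{[b,T_{K_\alpha}] - [b_n,T_{K_\alpha}]}{L^{p_1}\to L^{q_1}}
  = \Norm{[b - b_n, T_{K_\alpha}]}{L^{p_1}\to L^{q_1}}
  \lesssim \Norm{b - b_n}{\BMO} \to 0,
\end{equation*}
so it suffices to show that, for each smooth compactly supported $b_n$, the commutator $[b_n, T_{K_\alpha}]$ is compact from $L^{p_1}$ to $L^{q_1}$; then $[b, T_{K_\alpha}]$ is an operator-norm limit of compact operators, hence compact.

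The main obstacle is precisely this last smooth-case step. I would verify it through the (unweighted) Fr\'echet--Kolmogorov criterion: the compact support of $b_n$ together with the $L^{p_1} \to L^{q_1}$ mapping of $T_{K_\alpha}$ and a truncation argument gives equi-integrability at infinity, while the smoothness of $b_n$ and the smoothness condition on $K_\alpha$ give uniform equicontinuity in the mean. This is the only place where one really uses the specific structure of the $\rho$-type kernel: the Dini condition $\int_0^1 \rho(t)\,dt/t < \infty$ is what allows one to sum the kernel oscillation estimate and conclude translation-equicontinuity of $[b_n, T_{K_\alpha}] f$, uniformly for $f$ in the unit ball of $L^{p_1}$. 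Once this smooth-case compactness is in hand, the density argument and Theorem \ref{thm:Off-dig.extrp.compact} together upgrade it to the full weighted compactness claimed in Theorem \ref{thm:cmpt. of rho type fractional}, for every admissible $p, q$ and every $w \in A_{p,q}(\R^d)$.
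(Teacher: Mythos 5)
Your proposal is correct and takes essentially the same route as the paper: both apply Theorem~\ref{thm:Off-dig.extrp.compact} with $[b,T_{K_\alpha}]$ in place of $T$, feeding in the weighted boundedness from Corollary~\ref{coro:rho type fractional} and the unweighted ($w_1\equiv 1$) compactness. The only difference is that where you sketch the smooth-approximation and Fr\'echet--Kolmogorov argument for the unweighted input, the paper simply cites the unweighted version of \cite[Theorem 1.5]{GWY}, whose proof proceeds exactly along the lines you describe.
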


The original proof of Theorem \ref{thm:cmpt. of rho type fractional} again follows by applying the weighted Fr\'echet--Kolmogorov criterion obtained in \cite{ClopCruz} and restated in \cite[Lemma 5.4]{GWY}. However, by only applying and verifying the unweighted Fr\'echet--Kolmogorov criterion the proof of Theorem \ref{thm:cmpt. of rho type fractional} can be simplified as follows:

\begin{proof}
Let us fix some $p_1,q_1\in(1,\infty)$ for which we verify the assumptions of Theorem \ref{thm:Off-dig.extrp.compact} for $[b,T_{K_\alpha}]$ in place of $T$: By Corollary \ref{coro:rho type fractional}, $[b,T_{K_\alpha}]$ is a bounded operator from $L^{p_1}(\tilde w^{p_1})$ to $L^{q_1}(\tilde w^{q_1})$ for all $1<p_1\leq q_1<\infty$ such that $\frac{1}{p}-\frac{1}{q}=\frac{1}{p_1}-\frac{1}{q_1}=\frac{\alpha}{d}$ and all $\tilde w\in A_{p_1,q_1}(\R^d)$.
By the unweighted version of \cite[Theorem 1.5]{GWY} (which depends on the classical, unweighted version of the Fr\'echet--Kolmogorov criterion), $[b,T_{K_\alpha}]$ is a compact operator from $L^{p_1}(\R^d)=L^{p_1}({w_1}^{p_1})$ to $L^{q_1}(\R^d)=L^{q_1}({w_1}^{q_1})$ with $w_1\equiv 1\in A_{p_1,q_1}(\R^d)$. Thus the assumptions, and hence the conclusion of Theorem \ref{thm:Off-dig.extrp.compact} hold for the operator $[b,T_{K_\alpha}]$ in place of $T$, and this is what we wanted.
\end{proof}

\section{Commutators of Bochner--Riesz multipliers}\label{comm. br. mult.}
In this section we will apply Theorem \ref{thm:limited range extrp.compact} to the commutators of Bochner--Riesz multipliers in dimensions $d\ge 2$. The latter is a Fourier multiplier $B^\kappa$ with the symbol $(1-|\xi|^2)_{+}^{\kappa}$, where $\kappa>0$ and $t_{+}=\max(t,0)$. That is, the Bochner--Riesz operator is defined, on the class $\testi(\R^d)$ of Schwartz functions, by
\begin{equation*}
  \widehat {B^\kappa f} (\xi)=(1-|\xi|^2)_{+}^{\kappa}\widehat f(\xi),
\end{equation*}
where $\widehat f$ denotes the Fourier transform of $f$.

The following Bochner--Riesz conjecture is well-known:

\begin{conjecture}[Bochner--Riesz Conjecture]\label{j:BR} 
For $0<\kappa<\frac{d-1}{2}$, we have $B^\kappa: L ^{p}(\R ^{d})\mapsto L ^{p}(\R^{d})$ if 
\begin{equation*}
  p\in\bigg(\frac{2d}{d+1+2\kappa},\frac{2d}{d-1-2\kappa}\bigg). 
\end{equation*}
\end{conjecture}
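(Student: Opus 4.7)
The final statement is the celebrated \emph{Bochner--Riesz conjecture}, a notoriously open problem in harmonic analysis for every $d\ge 3$, so a genuine proof is not available. In the paper it surely serves only as a hypothesis from which weighted-boundedness and compactness consequences for commutators $[b,B^{\kappa}]$ will be extracted in later sections. What follows is the textbook strategy that establishes the known cases, from which any full proof would presumably descend.

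\textbf{Dyadic decomposition.} The first reduction is to write $(1-|\xi|^2)_+^{\kappa}=\sum_{j\ge 0}2^{-j\kappa}m_j(\xi)$ with $m_j$ a smooth bump adapted to the shell of thickness $\sim 2^{-j}$ adjacent to the unit sphere, so that $B^{\kappa}=\sum_j 2^{-j\kappa}B_j$. After a parabolic rescaling each $B_j$ is essentially a Fourier integral operator associated with $S^{d-1}$. The desired $L^p$ bound would follow from a uniform-in-$j$ estimate of the form $\|B_j f\|_{L^p}\lesssim 2^{j\epsilon}\|f\|_{L^p}$ for arbitrary $\epsilon>0$; summing against $2^{-j\kappa}$ recovers exactly the conjectured range of $(p,\kappa)$.

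\textbf{Oscillatory/restriction input.} A $TT^*$ argument identifies the $L^p$-bound on $B_j$ with a Stein--Tomas type extension estimate for $S^{d-1}$ localized at scale $2^{-j}$. In dimension $d=2$ the Carleson--Sj\"olin theorem, powered by the sharp $L^4$ restriction estimate and real interpolation, closes the problem completely. For $d\ge 3$ the best available ingredients are the bilinear restriction estimate of Tao (building on Wolff), the $\ell^2$-decoupling theorem of Bourgain--Demeter, and the polynomial partitioning of Guth and Guth--Hickman--Iliopoulou; each of these widens the admissible range of $p$ but none reaches the conjectured endpoint.

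\textbf{Main obstacle.} The decisive obstruction is precisely the one that blocks the full restriction conjecture, to which Bochner--Riesz is equivalent in a suitable sense: one cannot currently rule out the focussing of wave packets on low-degree algebraic varieties, which would force improved linear estimates beyond the bilinear or multilinear regime. This is exactly where any attempt at a complete proof would, at present, stall; accordingly, the authors wisely treat the statement as a conjecture rather than a theorem and presumably proceed by deducing conditional results for commutators $[b,B^{\kappa}]$ via Theorem~\ref{thm:limited range extrp.compact}.
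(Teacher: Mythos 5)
You correctly recognize that this is the open Bochner--Riesz conjecture, which the paper does not prove and could not prove: it is stated as a conjecture precisely so that the subsequent weighted-boundedness (Theorem~\ref{thm:Bochner-Riesz limited range estimate}) and compactness results for $[b,B^\kappa]$ can be formulated conditionally on the range $p_0$ for which the equivalent estimate \eqref{eq:conjecture} is known. Your discussion of the known cases (Carleson--Sj\"olin for $d=2$, Bourgain--Guth and decoupling/polynomial-partitioning inputs for $d\ge 3$) matches the paper's references and its conditional use of the conjecture, so there is nothing to prove and nothing to fault.
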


This conjecture holds in two dimensions, as was proved by Carleson--Sj\"olin \cite{CS} (also see C\'ordoba \cite{Cordoba1979}). In the case $d\ge 3$, the best results are currently due to Bourgain--Guth \cite{BG}, but also see Lee \cite{Lee}.

In \cite{LMR2019}, an equivalent form of the Bochner--Riesz Conjecture \ref{j:BR} is stated as follows:
\begin{conjecture}\label{conj.}
Let $\mathbf 1_{[-1/4,1/4]}\leq\chi\leq\mathbf 1_{[-1/2,1/2]}$ be a Schwartz function and denote by $S_{\tau}$ the Fourier multiplier with symbol  $\chi((|\xi|-1)/\tau)$. If $\frac{2d}{d+1}<p<\frac{2d}{d-1}$, then 
\begin{equation}\label{eq:conjecture}
  \|S_{\tau}\|_{L^p(\R^d)\mapsto L^p(\R^d)}\leq C_{\epsilon}\tau^{-\epsilon},
\end{equation}
where $0<\tau<1$ and $C_{\epsilon}$ is a constant that depends on $0<\epsilon<1$.
\end{conjecture}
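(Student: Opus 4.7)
\emph{Plan.} The approach is to reduce the bound \eqref{eq:conjecture} to sharp extension-type estimates near the sphere $S^{d-1}$ via a cap decomposition and decoupling. Since the symbol $\chi((|\xi|-1)/\tau)$ is supported in the annulus $\{\xi:|\,|\xi|-1|\lesssim\tau\}$, I would cover this annulus by finitely-overlapping curved caps $\theta$ of tangential diameter $\sim\sqrt{\tau}$ and radial thickness $\sim\tau$, fix a smooth partition of unity $\{\varphi_\theta\}$ adapted to them, and decompose $S_\tau f=\sum_\theta S_\tau^\theta f$, where $S_\tau^\theta$ is the Fourier multiplier with symbol $\chi((|\xi|-1)/\tau)\varphi_\theta(\xi)$. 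Each piece is essentially convolution with a Schwartz function adapted to a box of the dual dimensions, so a combination of Plancherel, the trivial $L^\infty$ bound, and interpolation would give the expected single-cap estimate on $S_\tau^\theta$ with at most a logarithmic loss in $\tau$.

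The recombination step would invoke the Bourgain--Demeter $\ell^2$-decoupling inequality for the sphere, which supplies $\|\sum_\theta g_\theta\|_{L^p(\R^d)}\lesssim_\epsilon\tau^{-\epsilon}\bigl(\sum_\theta\|g_\theta\|_{L^p(\R^d)}^2\bigr)^{1/2}\cdot(\#\theta)^{\frac12-\frac1p}$ whenever each $g_\theta$ has Fourier support in the $\theta$-cap and $p\leq 2(d+1)/(d-1)$. Applying this to $g_\theta=S_\tau^\theta f$, substituting the single-cap estimate, and using almost-orthogonality in $\theta$ (via Littlewood--Paley projection to the annulus) would produce \eqref{eq:conjecture} in the decoupling range.

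\emph{Main obstacle.} Conjecture \ref{conj.} is, by the reference in \cite{LMR2019}, formally equivalent to the Bochner--Riesz Conjecture \ref{j:BR}, so proving \eqref{eq:conjecture} in the full range $p\in(2d/(d+1),2d/(d-1))$ is exactly as hard as Bochner--Riesz. In dimension two the bound is known via Carleson--Sj\"olin \cite{CS}. In dimensions $d\geq 3$, the plan above delivers \eqref{eq:conjecture} only up to the decoupling exponent $p=2(d+1)/(d-1)$, which sits strictly inside the target range; pushing past this threshold requires the Bourgain--Guth \cite{BG} broad/narrow decomposition together with sharp multilinear Kakeya, and even these state-of-the-art tools fall short of the full conjectured range. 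Closing this final gap in $d\geq 3$ is the essential difficulty and is, at present, beyond reach.
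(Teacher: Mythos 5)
This statement is labeled a \emph{Conjecture} in the paper because it is genuinely open: the paper offers no proof, citing \cite{LMR2019} for its equivalence with the Bochner--Riesz Conjecture \ref{j:BR}, with only the $d=2$ case (Carleson--Sj\"olin) and partial ranges in $d\geq3$ (Bourgain--Guth and predecessors) known. The paper uses \eqref{eq:conjecture} only as a black-box hypothesis --- ``$1<p_0<2$ is such that the estimate \eqref{eq:conjecture} holds'' --- in Theorem \ref{thm:Bochner-Riesz limited range estimate} and the corollaries that depend on it. You correctly diagnose this in your final paragraph, so no one should expect your sketch to close the gap; the honest acknowledgment is the right answer here.

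That said, the recombination step in your sketch has a structural problem beyond the issue of range. Bourgain--Demeter $\ell^2$ decoupling for the sphere, in the normalization relevant here, states that for $2\leq p\leq 2(d+1)/(d-1)$ and $g_\theta$ Fourier-supported in $\sqrt\tau$-caps of the $\tau$-annulus,
\begin{equation*}
\Big\|\sum_\theta g_\theta\Big\|_{L^p(\R^d)}\leq C_\epsilon\,\tau^{-\epsilon}\Big(\sum_\theta\|g_\theta\|_{L^p(\R^d)}^2\Big)^{1/2};
\end{equation*}
there is no extra $(\#\theta)^{1/2-1/p}$ factor, and for $p<2$ the statement must be dualized, not applied verbatim. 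More seriously, feeding this the single-cap bound $\|S_\tau^\theta f\|_{L^p}\lesssim\|f\|_{L^p}$ produces $(\sum_\theta\|S_\tau^\theta f\|_{L^p}^2)^{1/2}\lesssim(\#\theta)^{1/2}\|f\|_{L^p}$, and since $\#\theta\sim\tau^{-(d-1)/2}$ this loses $\tau^{-(d-1)/4}$, far from \eqref{eq:conjecture}. Decoupling controls how much the $L^p$ norm can grow when the $g_\theta$ are summed, but it does not by itself contract the $\ell^2$ sum of cap-localized $L^p$ norms of a single input $f$; one needs a genuine reverse square-function or orthogonality input for the family $\{S_\tau^\theta f\}$ in $L^p$ (as in C\'ordoba's $L^4$ orthogonality argument in $d=2$, or wave-packet/Kakeya-type geometry in higher dimensions). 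This is exactly why Bochner--Riesz does not follow from $\ell^2$ decoupling even within the decoupling range, and your sketch would not deliver \eqref{eq:conjecture} even up to $p=2(d+1)/(d-1)$ as written.
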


The connection between the Bochner--Riesz and the $S_{\tau}$ Fourier multipliers is well-known and it can be found in \cite{Carbery, Cordoba1977, Cordoba1979} and \cite[Chapter 8.5]{D2001}. We briefly recall it here. For each $0<\kappa<\frac{d-1}{2}$, we have
\begin{equation*}
  B^\kappa=T^0+\sum_{i=1}^{\infty }2^{-i\kappa}\operatorname{Dil}_{1-2^{-i}}S_{2^{-i}}, 
\end{equation*}
where $T^0$ is a Fourier multiplier, with the multiplier being a Schwartz function supported near the origin and the operator $\operatorname{Dil}_s f(x)=f(x/s)$ is a dilation operator. 
Moreover, each $S_{2^{-i}}$ is a Fourier multiplier with symbol $\chi_i (2^{i}\big||\xi|-1\big|)$, where the $\chi_i$ satisfy a uniform class of derivative estimates.

The partial knowledge of the range of exponent which depends on the parameter $1<p_0<2$ such that the estimate \eqref{eq:conjecture} of Conjecture \ref{conj.} holds is used in the following theorem of Lacey--Mena--Reguera \cite{LMR2019}:

\begin{theorem}[\cite{LMR2019}, Theorem 6.1]\label{thm:Bochner-Riesz limited range estimate}
If $d=2$, $0<\kappa<\tilde\kappa<\frac{1}{2}$ and $p\in(\frac{4}{1+6\kappa},\frac{4}{1-2\kappa})$, then $B^{\tilde\kappa}$ is bounded on $L^p(w)$ for all $w\in A_{\frac{p(1+6\kappa)}{4}}(\R^2)\cap RH_{\big(\frac{4}{p(1-2\kappa)}\big)'}(\R^2)$.
\newline Moreover, if $d\ge 3$, $0<\kappa<\tilde\kappa<\frac{d-1}{2}$, $1<p_0<2$ is such that the estimate \eqref{eq:conjecture} of Conjecture \ref{conj.} holds, and 
\begin{equation*}
  p\in\bigg(\frac{p_0(d-1)}{d-1+2\kappa(p_0-1)},\frac{p_0(d-1)}{d-1-2\kappa}\bigg),
\end{equation*}
then $B^{\tilde\kappa}$ is bounded on $L^p(w)$ for all
\begin{equation*}
  w\in A_{\frac{p(d-1+2\kappa(p_0-1))}{p_0(d-1)}}(\R^d)\cap RH_{\big(\frac{p_0(d-1)}{p(d-1-2\kappa)}\big)'}(\R^d).
\end{equation*}
\end{theorem}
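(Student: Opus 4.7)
The plan is to use the dyadic decomposition
\begin{equation*}
  B^{\tilde\kappa}=T^{0}+\sum_{i=1}^{\infty}2^{-i\tilde\kappa}\operatorname{Dil}_{1-2^{-i}}S_{2^{-i}}
\end{equation*}
recalled just before the statement, and reduce the question to a uniform single-scale weighted bound on $S_{\tau}$ of the form $\|S_{\tau}\|_{L^{p}(w)\to L^{p}(w)}\lesssim \tau^{-\eta}$ for $\eta$ as small as we please. The residual piece $T^{0}$ is a Fourier multiplier with a Schwartz symbol, hence bounded on every weighted $L^{p}(w)$ appearing in the statement. The dilations $\operatorname{Dil}_{1-2^{-i}}$ act on $L^{p}(w)$ with norm uniformly bounded in $i$, because the Muckenhoupt and reverse H\"older characteristics of $w$ are invariant (up to constants) under dilations and the dilation factor $1-2^{-i}$ stays in $[1/2,1]$.

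The core single-scale estimate would be produced by combining two unweighted inputs and then extrapolating. On the one hand, Conjecture~\ref{conj.} gives $\|S_{\tau}\|_{L^{p_{0}}\to L^{p_{0}}}\lesssim_{\eta}\tau^{-\eta}$ for any $\eta>0$ (in the two-dimensional case this is unconditional via Carleson--Sj\"olin); duality yields the same bound on $L^{p_{0}'}$. On the other hand, since $|\chi((|\xi|-1)/\tau)|\leq 1$, we have the trivial bound $\|S_{\tau}\|_{L^{2}\to L^{2}}\leq 1$. Interpolating these unweighted estimates over the range $[p_{0},p_{0}']$ and then applying the limited-range extrapolation Theorem~\ref{thm:limited range extrp.}, with $p_{-}=p_{0}(d-1)/(d-1+2\kappa(p_{0}-1))$ and $p_{+}=p_{0}(d-1)/(d-1-2\kappa)$, propagates the $\tau^{-\eta}$ control to every $L^{p}(w)$ with $w$ in the prescribed Muckenhoupt and reverse H\"older class. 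A cleaner way to organize this step is via a sparse-form domination of $S_{\tau}$ by a fractional sparse operator whose weighted bounds on the class $A_{p/p_{-}}\cap RH_{(p_{+}/p)'}$ are classical; this is essentially the route taken in \cite{LMR2019}.

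Given the single-scale estimate $\|S_{2^{-i}}\|_{L^{p}(w)\to L^{p}(w)}\lesssim 2^{i\eta}$, summation is immediate: fixing $\eta\in(0,\tilde\kappa-\kappa)$, the series $\sum_{i\geq 1}2^{-i(\tilde\kappa-\eta)}$ converges, yielding the claimed weighted boundedness of $B^{\tilde\kappa}$. The hard part of the plan is the single-scale weighted estimate with the right quantitative dependence on $\tau$: one must simultaneously keep track of the $\tau$-loss from the oscillatory input and of the weight characteristic, and verify that limited-range extrapolation preserves the $\tau^{-\eta}$ factor as a multiplicative constant depending only on $[w]$. The gap $\tilde\kappa>\kappa$ is precisely what allows us to choose $\eta$ small enough to sum the series while still respecting the weight class dictated by $\kappa$.
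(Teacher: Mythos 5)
This theorem is quoted from \cite{LMR2019}, Theorem~6.1, and the paper offers no proof of its own, so the relevant comparison is with the argument of that reference.

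Your overall structure---the dyadic decomposition $B^{\tilde\kappa}=T^{0}+\sum_{i\geq 1}2^{-i\tilde\kappa}\operatorname{Dil}_{1-2^{-i}}S_{2^{-i}}$, the treatment of the Schwartz piece $T^0$, the dilation invariance of the $A_q\cap RH_s$ characteristics, and the convergence of $\sum_{i\geq 1}2^{-i(\tilde\kappa-\eta)}$ for $\eta<\tilde\kappa-\kappa$---is correct. But the step that you label the ``core single-scale estimate'' has a genuine gap. You propose to interpolate the unweighted bounds $\|S_\tau\|_{L^{p_0}\to L^{p_0}},\ \|S_\tau\|_{L^{p_0'}\to L^{p_0'}}\lesssim\tau^{-\eta}$ and $\|S_\tau\|_{L^2\to L^2}\leq 1$, and then apply Theorem~\ref{thm:limited range extrp.} to propagate the $\tau^{-\eta}$ bound to the weighted setting. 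Theorem~\ref{thm:limited range extrp.}, however, requires as input a \emph{weighted} estimate: for some exponent $p_1\in[p_-,p_+]$, boundedness on $L^{p_1}(\tilde w)$ must hold for \emph{every} $\tilde w\in A_{p_1/p_-}(\R^d)\cap RH_{(p_+/p_1)'}(\R^d)$. With the $p_\pm$ you chose, that class is nontrivial at every admissible $p_1$---in particular at $p_1=p_0$, where $p_0/p_-=(d-1+2\kappa(p_0-1))/(d-1)>1$---so an unweighted estimate never verifies the hypothesis, and interpolating unweighted estimates with one another does not change this. One cannot pass from ``bounded on $L^p$'' to ``bounded on $L^p(w)$ for all $w$ in a nontrivial Muckenhoupt--reverse H\"older class'' by extrapolation alone; the hard single-scale weighted estimate that you defer to is exactly what is needed, and it has not been supplied.

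Your second option---a sparse bilinear-form domination of $S_\tau$ with a quantitative $\tau^{-\eta}$ loss, combined with the standard weighted estimates for sparse forms in the class $A_{p/p_-}\cap RH_{(p_+/p)'}$---is not merely a ``cleaner way to organize'' the first route; it is the mechanism by which \cite{LMR2019} actually produce the weighted single-scale estimate, and it is precisely the ingredient that the interpolate-then-extrapolate scheme is missing. Taking the sparse route as primary, the remaining bookkeeping you describe (smooth piece, dilations, geometric summation in $i$) does close the proof.
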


Some earlier results in the same direction are contained in \cite{BBL}, \cite{CDL} and \cite{Christ}.

To streamline the presentation of our main result in this section about the compactness of commutators of Bochner--Riesz multipliers, we formulate the following Corollary of Theorem \ref{thm:Bochner-Riesz limited range estimate}:

\begin{corollary}\label{coro:Bochner-Riesz limited range estimate}
If $d=2$, $0<\tilde\kappa<\frac{1}{2}$ and $p\in(\frac{4}{1+6\tilde\kappa},\frac{4}{1-2\tilde\kappa})$, then $B^{\tilde\kappa}$ is bounded on $L^p(w)$ for all $w\in A_{\frac{p(1+6\tilde\kappa)}{4}}(\R^2)\cap RH_{\big(\frac{4}{p(1-2\tilde\kappa)}\big)'}(\R^2)$.
\newline Moreover, if $d\ge 3$, $0<\tilde\kappa<\frac{d-1}{2}$, $1<p_0<2$ is such that the estimate \eqref{eq:conjecture} of Conjecture \ref{conj.} holds, and 
\begin{equation*}
  p\in\bigg(\frac{p_0(d-1)}{d-1+2\tilde\kappa(p_0-1)},\frac{p_0(d-1)}{d-1-2\tilde\kappa}\bigg),
\end{equation*}
then $B^{\tilde\kappa}$ is bounded on $L^p(w)$ for all
\begin{equation*}
  w\in A_{\frac{p(d-1+2\tilde\kappa(p_0-1))}{p_0(d-1)}}(\R^d)\cap RH_{\big(\frac{p_0(d-1)}{p(d-1-2\tilde\kappa)}\big)'}(\R^d).
\end{equation*} 
\end{corollary}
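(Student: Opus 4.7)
The plan is to deduce Corollary \ref{coro:Bochner-Riesz limited range estimate} directly from Theorem \ref{thm:Bochner-Riesz limited range estimate} via the ``openness'' (self-improvement) properties of the Muckenhoupt and reverse H\"older classes collected in Proposition \ref{prop:weights}. Since the $d=2$ and $d\ge 3$ statements have the same structure, differing only in the numerical form of the critical exponents, I describe the strategy in detail for $d=2$ and indicate at the end how the identical argument handles the higher-dimensional case.

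Fix $\tilde\kappa\in(0,\tfrac12)$, $p\in(\tfrac{4}{1+6\tilde\kappa},\tfrac{4}{1-2\tilde\kappa})$ and $w\in A_{p(1+6\tilde\kappa)/4}(\R^2)\cap RH_{(4/(p(1-2\tilde\kappa)))'}(\R^2)$. The plan is to produce an auxiliary parameter $\kappa\in(0,\tilde\kappa)$ satisfying $p\in(\tfrac{4}{1+6\kappa},\tfrac{4}{1-2\kappa})$ together with $w\in A_{p(1+6\kappa)/4}(\R^2)\cap RH_{(4/(p(1-2\kappa)))'}(\R^2)$, so that Theorem \ref{thm:Bochner-Riesz limited range estimate} applied to the pair $(\kappa,\tilde\kappa)$ immediately delivers the desired $L^p(w)$-boundedness of $B^{\tilde\kappa}$.

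The key point is that each of the three conditions on $\kappa$ is open in $\kappa$ at $\kappa=\tilde\kappa$ from below. First, the intervals $(\tfrac{4}{1+6\kappa},\tfrac{4}{1-2\kappa})$ form a continuously increasing family with limit $(\tfrac{4}{1+6\tilde\kappa},\tfrac{4}{1-2\tilde\kappa})$ as $\kappa\uparrow\tilde\kappa$, so $p$ belongs to them as soon as $\kappa$ is sufficiently close to $\tilde\kappa$. Second, for the $A_p$-membership, property \eqref{weights prop. 2} of Proposition \ref{prop:weights} applied to $w\in A_{p(1+6\tilde\kappa)/4}$ gives some $q^*<p(1+6\tilde\kappa)/4$ with $w\in A_{q^*}$, and the monotone inclusion $A_{q^*}\subset A_{p(1+6\kappa)/4}$ then holds once $p(1+6\kappa)/4\ge q^*$, another open condition near $\tilde\kappa$. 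Third, for the reverse H\"older membership, property \eqref{weights prop. 3} supplies some $r^*>(4/(p(1-2\tilde\kappa)))'$ with $w\in RH_{r^*}$, whence H\"older's inequality forces $w\in RH_{(4/(p(1-2\kappa)))'}$ whenever $(4/(p(1-2\kappa)))'\le r^*$, again an open condition near $\tilde\kappa$. Picking any $\kappa$ in the (non-empty) intersection of these three sub-intervals of $(0,\tilde\kappa)$ completes the argument.

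The $d\ge 3$ case follows by the same three-step plan: the four relevant functions of $\kappa$, namely $\tfrac{p_0(d-1)}{d-1+2\kappa(p_0-1)}$, $\tfrac{p_0(d-1)}{d-1-2\kappa}$, $\tfrac{p(d-1+2\kappa(p_0-1))}{p_0(d-1)}$ and $\bigl(\tfrac{p_0(d-1)}{p(d-1-2\kappa)}\bigr)'$, are each continuous and monotone in $\kappa$, so all three conditions are open at $\kappa=\tilde\kappa$ from below and can be arranged simultaneously. No genuine obstacle is expected; the entire substance of the proof is the parameter bookkeeping that verifies the three open conditions can be met at once, the deep analytic input having already been absorbed into Theorem \ref{thm:Bochner-Riesz limited range estimate}.
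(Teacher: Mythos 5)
Your proposal is correct and takes essentially the same approach as the paper: fix $\tilde\kappa$, $p$, $w$, then use the self-improvement properties \eqref{weights prop. 2} and \eqref{weights prop. 3} of Proposition \ref{prop:weights} together with continuity of the exponents in $\kappa$ to find $\kappa<\tilde\kappa$ close enough that the hypotheses of Theorem \ref{thm:Bochner-Riesz limited range estimate} hold. Your writeup is in fact slightly more explicit than the paper's about the direction of the monotone inclusions of the $A_q$ and $RH_r$ classes, but the underlying argument is identical.
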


\begin{proof}
Let us fix $\tilde\kappa, p$ and the weight $w$ of our assumptions. For each selection of these fixed values we show that we can choose $\kappa$ sufficiently close to $\tilde\kappa$ (depending on $\tilde\kappa, p$ and the weight $w$) such that the assumptions of Theorem \ref{thm:Bochner-Riesz limited range estimate} are satisfied. By properties \eqref{weights prop. 2} and \eqref{weights prop. 3} of Proposition \ref{prop:weights} if
\begin{equation*}
  w\in A_{\frac{p(d-1+2\tilde\kappa(p_0-1))}{p_0(d-1)}}(\R^d)\cap RH_{\big(\frac{p_0(d-1)}{p(d-1-2\tilde\kappa)}\big)'}(\R^d)
\end{equation*}
then for $\kappa$ sufficiently close to $\tilde\kappa$ we also have that $\frac{p(d-1+2\kappa(p_0-1))}{p_0(d-1)}$ is sufficiently close to $\frac{p(d-1+2\tilde\kappa(p_0-1))}{p_0(d-1)}$ and $\big(\frac{p_0(d-1)}{p(d-1-2\kappa)}\big)'$ is sufficiently close to
\\$\big(\frac{p_0(d-1)}{p(d-1-2\tilde\kappa)}\big)'$ such that 
\begin{equation*}
  w\in A_{\frac{p(d-1+2\kappa(p_0-1))}{p_0(d-1)}}(\R^d)\cap RH_{\big(\frac{p_0(d-1)}{p(d-1-2\kappa)}\big)'}(\R^d).
\end{equation*}

By continuity, since 
\begin{equation*}
  p\in\bigg(\frac{p_0(d-1)}{d-1+2\tilde\kappa(p_0-1)},\frac{p_0(d-1)}{d-1-2\tilde\kappa}\bigg)
\end{equation*}
we also have that 
\begin{equation*}
  p\in\bigg(\frac{p_0(d-1)}{d-1+2\kappa(p_0-1)},\frac{p_0(d-1)}{d-1-2\kappa}\bigg),
\end{equation*}
provided that $\kappa$ is sufficiently close to $\tilde\kappa$.

Hence the assumptions of Theorem \ref{thm:Bochner-Riesz limited range estimate} are satisfied, and thus $B^{\tilde\kappa}$ is bounded on $L^p(w)$ for the arbitrary choice of the quantities $\tilde\kappa, p$ and $w$ in the statement of Corollary \ref{coro:Bochner-Riesz limited range estimate} that we considered. This concludes the proof.
\end{proof}

We extend this result to the commutator $[b,B^\kappa]$ by recalling the following corollary of Theorem \ref{thm:commutators lim. range} obtained in \cite{BMMST} (it follows by applying property \eqref{weights prop. 5} of Proposition \ref{prop:weights}):

\begin{corollary}[\cite{BMMST}, Corollary 5.3]\label{coro:restricted}
Let $1\leq p_{-}<p<p_{+}\leq\infty$, and $T$ be a linear operator bounded on $L^{p}(w)$ for all $w\in A_{\frac{p}{p_{-}}}(\R^d)\cap RH_{\big(\frac{p_{+}}{p}\big)'}(\R^d)$. If $b\in\BMO(\R^d)$, then $[b,T]$ is bounded on $L^p(w)$ for all $w\in A_{\frac{p}{p_{-}}}(\R^d)\cap RH_{\big(\frac{p_{+}}{p}\big)'}(\R^d)$.
\end{corollary}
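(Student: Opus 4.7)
My plan is to adapt the classical Coifman--Rochberg--Weiss proof via the Cauchy integral and the conjugated operator $T_z f := e^{-zb}T(e^{zb}f)$, using property \eqref{weights prop. 5} of Proposition \ref{prop:weights} to collapse the two-condition weight class $A_{p/p_-}\cap RH_{(p_+/p)'}$ into a single $A_s$ class. Concretely, setting $\sigma := (p_+/p)'$ and $s := \sigma(p/p_- - 1)+1$, the equivalence
\begin{equation*}
w\in A_{p/p_-}(\R^d)\cap RH_{(p_+/p)'}(\R^d) \iff W := w^{\sigma}\in A_s(\R^d)
\end{equation*}
moves the problem into a setting where the standard $A_s$ perturbation theory is directly available.

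Next, I write the commutator as a Cauchy integral. Since $(d/dz)\,T_z f|_{z=0}=-[b,T]f$, one has
\begin{equation*}
  [b,T]f = -\frac{1}{2\pi\img}\oint_{|z|=r}\frac{T_z f}{z^2}\ud z,
\end{equation*}
and Minkowski's inequality gives $\Norm{[b,T]f}{L^p(w)}\leq r^{-1}\sup_{|z|=r}\Norm{T_z f}{L^p(w)}$. A direct change of variables in the norm yields $\Norm{T_z f}{L^p(w)} = \Norm{T(e^{zb}f)}{L^p(w_z)}$ with $w_z := w\cdot e^{-p\Re(zb)}$, and similarly $\Norm{e^{zb}f}{L^p(w_z)} = \Norm{f}{L^p(w)}$. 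Hence it suffices to show that $w_z \in A_{p/p_-}(\R^d)\cap RH_{(p_+/p)'}(\R^d)$ with uniformly controlled constants, for all $|z|\leq r$ and some small enough $r>0$; the assumed boundedness of $T$ on $L^p(w_z)$ then closes the loop.

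By the reformulation via property \eqref{weights prop. 5}, the preservation of the weight class in turn reduces to showing that $W_z := W\cdot e^{-p\sigma\Re(zb)}\in A_s(\R^d)$ uniformly for small $|z|$. This is a classical perturbation fact: by the John--Nirenberg inequality, $e^{\lambda b}\in A_1(\R^d)\cap RH_\infty(\R^d)$ for all complex $\lambda$ with $|\lambda|\leq \lambda_0(\Norm{b}{\BMO})$, and multiplication by such a factor preserves $A_s$ with quantitative control on its characteristic, provided $|\lambda|$ is also small compared to a quantity depending on $[W]_{A_s}$. Choosing $r$ sufficiently small (depending on $\Norm{b}{\BMO}$ and $[W]_{A_s}$) therefore gives the required uniform bound.

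The main step requiring genuine care is precisely this perturbation lemma, but it is entirely standard; everything else is a direct unwinding. In particular, once property \eqref{weights prop. 5} has translated the limited-range condition into a single $A_s$ condition, the rest of the argument is a verbatim transcription of the textbook CRW proof, so I do not anticipate any real obstacle.
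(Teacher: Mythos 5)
Your approach via the Coifman--Rochberg--Weiss conjugation and Cauchy integral is exactly the right one, and indeed it is what underlies the cited result: the paper itself does not prove this Corollary, it merely invokes \cite{BMMST} with the one-line remark that it ``follows by applying property \eqref{weights prop. 5} of Proposition \ref{prop:weights}.'' You have essentially reconstructed the argument that the paper leaves to the reference, and the main skeleton --- writing $[b,T]$ as a contour integral of $T_z f=e^{-zb}T(e^{zb}f)$, transferring the norm to $L^p(w_z)$ with $w_z=w e^{-p\Re(zb)}$, and then using property \eqref{weights prop. 5} to recast the two-condition class $A_{p/p_-}\cap RH_{(p_+/p)'}$ as a single class $A_s$ so that a small-perturbation lemma can be applied --- is sound.

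There is, however, a genuinely wrong claim in your justification of the perturbation step. It is \emph{not} true that $e^{\lambda b}\in A_1(\R^d)\cap RH_\infty(\R^d)$ for small $|\lambda|$ when $b\in\BMO(\R^d)$: a weight in $A_1\cap RH_\infty$ satisfies $\esssup_Q w\lesssim\ave{w}_Q\lesssim\operatorname{ess\,inf}_Q w$, i.e.\ it is essentially constant on every cube, and this fails already for $b(x)=\log|x|$, $e^{\lambda b}(x)=|x|^\lambda$. What John--Nirenberg actually gives is the oscillation bound
\begin{equation*}
\sup_Q\ave{\,e^{c\abs{b-\ave{b}_Q}}\,}_Q\leq C\qquad\text{for all }0<c\leq c_0(\Norm{b}{\BMO}),
\end{equation*}
equivalently $e^{\lambda b}\in\bigcap_{q>1}A_q(\R^d)$ for small $|\lambda|$, but not $A_1\cap RH_\infty$. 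The correct route to the perturbation lemma is to factor $e^{\mu\Re b}=e^{\mu\Re(b-\ave{b}_Q)}\,e^{\mu\Re\ave{b}_Q}$ inside each average, apply H\"older with a near-$1$ exponent so that the self-improving reverse H\"older property of $W\in A_s$ absorbs the conjugate-exponent power of $W$, and then control the oscillation factor by the John--Nirenberg estimate above; carrying this out for both $\ave{W_z}_Q$ and $\ave{W_z^{-1/(s-1)}}_Q$ yields $[W_z]_{A_s}\lesssim[W]_{A_s}$ for $|z|$ small. With this correction the argument closes; the rest of your write-up (the conjugation, the Cauchy integral, the change of weight, and the reduction via property \eqref{weights prop. 5}) is correct.
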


By applying Corollary \ref{coro:restricted} to the operator $B^\kappa$ in place of $T$, the following weighted boundedness for the commutator $[b,B^\kappa]$ holds:

\begin{corollary}\label{coro:commu. of Bochner-Riesz limited range estimate}
If $d=2$, $0<\kappa<\frac{1}{2}$, and $p\in(\frac{4}{1+6\kappa},\frac{4}{1-2\kappa})$, then for $b\in\BMO(\R^2)$, the commutator $[b,B^\kappa]$ is bounded on $L^p(w)$ for all $w\in A_{\frac{p(1+6\kappa)}{4}}(\R^2)\cap RH_{\big(\frac{4}{p(1-2\kappa)}\big)'}(\R^2)$.
\newline Moreover, if $d\ge 3$, $0<\kappa<\frac{d-1}{2}$, $1<p_0<2$ is such that the estimate \eqref{eq:conjecture} of Conjecture \ref{conj.} holds and 
\begin{equation*}
  p\in\bigg(\frac{p_0(d-1)}{d-1+2\kappa(p_0-1)},\frac{p_0(d-1)}{d-1-2\kappa}\bigg),
\end{equation*}
then for $b\in\BMO(\R^d)$, the commutator $[b,B^\kappa]$ is bounded on $L^p(w)$ for all
\begin{equation*}
  w\in A_{\frac{p(d-1+2\kappa(p_0-1))}{p_0(d-1)}}(\R^d)\cap RH_{\big(\frac{p_0(d-1)}{p(d-1-2\kappa)}\big)'}(\R^d).
\end{equation*}
\end{corollary}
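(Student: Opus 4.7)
The plan is to deduce Corollary \ref{coro:commu. of Bochner-Riesz limited range estimate} as a direct consequence of combining the linear boundedness result for $B^\kappa$ (Corollary \ref{coro:Bochner-Riesz limited range estimate}) with the general commutator extrapolation result of B\'enyi--Martell--Moen--Stachura--Torres (Corollary \ref{coro:restricted}). Concretely, I would apply Corollary \ref{coro:restricted} with $T = B^\kappa$, and my only real task is to identify the parameters $p_{-}$ and $p_{+}$ so that the weight class $A_{p/p_{-}}(\R^d)\cap RH_{(p_{+}/p)'}(\R^d)$ of Corollary \ref{coro:restricted} coincides with the weight class appearing in the statement to be proved.

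In the two-dimensional case, the natural choice is $p_{-} = 4/(1+6\kappa)$ and $p_{+} = 4/(1-2\kappa)$, which gives $p/p_{-} = p(1+6\kappa)/4$ and $(p_{+}/p)' = \bigl(4/(p(1-2\kappa))\bigr)'$, matching the weight class in the statement verbatim. In higher dimensions $d \ge 3$, the analogous choice is $p_{-} = p_0(d-1)/(d-1+2\kappa(p_0-1))$ and $p_{+} = p_0(d-1)/(d-1-2\kappa)$; again $p/p_{-}$ and $(p_{+}/p)'$ reproduce exactly the exponents displayed in the statement. In both cases, the prescribed range of $p$ is precisely the interval $(p_{-}, p_{+})$, as required by the hypothesis $p_{-} < p < p_{+}$ of Corollary \ref{coro:restricted}.

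With these identifications in hand, Corollary \ref{coro:Bochner-Riesz limited range estimate} supplies exactly what is needed: $B^\kappa$ is bounded on $L^p(w)$ for every $w \in A_{p/p_{-}}(\R^d)\cap RH_{(p_{+}/p)'}(\R^d)$. Invoking Corollary \ref{coro:restricted} with $T = B^\kappa$ and the given $b \in \BMO(\R^d)$ then immediately yields boundedness of the commutator $[b, B^\kappa]$ on the same weighted Lebesgue spaces, which is the desired conclusion in both the $d=2$ and $d\ge 3$ cases.

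There is essentially no mathematical obstacle here; the two cases can be handled simultaneously once the exponents $p_{-}, p_{+}$ are read off. The only subtle point to double-check is that the range $p \in (p_{-}, p_{+})$ supplied in the hypothesis is a \emph{strict} open interval matching the hypothesis of Corollary \ref{coro:restricted}, which is evident from inspection. The argument is thus a completely routine bookkeeping of exponents chained together with two previously established results.
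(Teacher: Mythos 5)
Your proposal is correct and is exactly the argument the paper intends: apply Corollary \ref{coro:restricted} to $T=B^\kappa$, using Corollary \ref{coro:Bochner-Riesz limited range estimate} to supply the hypothesis, with the identifications $p_{-}=\frac{4}{1+6\kappa}$, $p_{+}=\frac{4}{1-2\kappa}$ for $d=2$ and $p_{-}=\frac{p_0(d-1)}{d-1+2\kappa(p_0-1)}$, $p_{+}=\frac{p_0(d-1)}{d-1-2\kappa}$ for $d\ge 3$. The paper states the corollary without an explicit proof precisely because this bookkeeping is routine, and your reconstruction matches it.
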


Moreover, an unweighted compactness result for the commutator $[b,B^\kappa]$ is due to Bu--Chen--Hu \cite{BCH}:

\begin{theorem}[\cite{BCH}, Theorems 1.1 and 1.2]\label{thm:unweighted compt. comm. of Bochner-Riesz limited range estimate}
If $d=2$, $0<\kappa<\frac{1}{2}$, and $p\in(\frac{4}{3+2\kappa},\frac{4}{1-2\kappa})$, then for $b\in\CMO(\R^2)$, the commutator $[b,B^\kappa]$ is compact on $L^p(\R^2)$.
\newline Let $d\ge3$, $\frac{d-1}{2d+2}<\kappa<\frac{d-1}{2}$, and $p\in(\frac{2d}{d+1+2\kappa},\frac{2d}{d-1-2\kappa})$. Then for $b\in\CMO(\R^d)$, the commutator $[b,B^\kappa]$ is compact on $L^p(\R^d)$.
\end{theorem}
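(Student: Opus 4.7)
The plan is to reduce the compactness of $[b,B^\kappa]$ to the case of smooth, compactly supported symbols $b$, and then to verify the Fr\'echet--Kolmogorov compactness criterion in $L^p(\R^d)$. First, recall that compact operators form a closed subspace of the bounded operators on $L^p(\R^d)$ in the operator-norm topology, and that by definition $\CMO(\R^d)=\overline{C_c^\infty(\R^d)}^{\BMO(\R^d)}$. Since $B^\kappa$ is bounded on $L^p(\R^d)$ in the stated range (Carleson--Sj\"olin in $d=2$, Bourgain--Guth in $d\ge 3$), Corollary \ref{coro:restricted} applied with $w\equiv 1$ (which lies in every $A_q(\R^d)\cap RH_s(\R^d)$) yields that the linear map $b\mapsto[b,B^\kappa]$ is bounded from $\BMO(\R^d)$ into the algebra of bounded operators on $L^p(\R^d)$. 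Approximating $b\in\CMO(\R^d)$ by $b_n\in C_c^\infty(\R^d)$ in $\BMO$-norm then gives $[b_n,B^\kappa]\to[b,B^\kappa]$ in operator norm, so it suffices to prove the compactness of $[b,B^\kappa]$ on $L^p(\R^d)$ for every fixed $b\in C_c^\infty(\R^d)$.

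For $b\in C_c^\infty(\R^d)$ the task is to verify the three Fr\'echet--Kolmogorov conditions on the image of the unit ball $\{[b,B^\kappa]f:\|f\|_{L^p}\le 1\}$: uniform $L^p$-boundedness, tightness at infinity, and equicontinuity under small translations. The first is immediate from the $L^p$-boundedness of $B^\kappa$. For tightness, the kernel representation $[b,B^\kappa]f(x)=\int K^\kappa(x-y)(b(x)-b(y))f(y)\,dy$ shows that when $|x|$ is large enough to lie outside $\supp b$, only the tail $\int_{\supp b}K^\kappa(x-y)b(y)f(y)\,dy$ contributes, and decay estimates for the Bochner--Riesz kernel $K^\kappa$ combined with H\"older's inequality yield the required uniform smallness outside a large ball.

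The main obstacle is the translation equicontinuity: showing $\sup_{\|f\|_{L^p}\le 1}\|\tau_h[b,B^\kappa]f-[b,B^\kappa]f\|_{L^p}\to 0$ as $|h|\to 0$. Exploiting translation-invariance of $B^\kappa$, one writes $\tau_h[b,B^\kappa]f=[\tau_hb,B^\kappa](\tau_hf)$ and splits
\[
\tau_h[b,B^\kappa]f-[b,B^\kappa]f=[\tau_hb-b,B^\kappa](\tau_hf)+[b,B^\kappa](\tau_hf-f).
\]
The first summand is controlled by $\|\tau_hb-b\|_{\BMO}\lesssim\|\nabla b\|_\infty|h|$ via the commutator bound from paragraph one, but the second summand involves $\|\tau_hf-f\|_{L^p}$, which is not uniformly small for $\|f\|_{L^p}\le 1$. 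The remedy exploits the dyadic decomposition $B^\kappa=T^0+\sum_{i\ge 1}2^{-i\kappa}\operatorname{Dil}_{1-2^{-i}}S_{2^{-i}}$ recorded earlier in the paper: one proves a translation-equicontinuity estimate for each piece $[b,S_{2^{-i}}]$ with a loss growing at worst polynomially in $i$, exploiting that the kernel of $[b,S_{2^{-i}}]$ inherits an effective factor of $\nabla b$ near the diagonal due to the smoothness of $b$, and then sums these estimates against $2^{-i\kappa}$. Balancing the frequency-localization loss from $S_{2^{-i}}$ against the translation-size gain is the hardest technical step, and it is precisely this delicate kernel-level argument that is carried out in detail by Bu--Chen--Hu in \cite{BCH}.
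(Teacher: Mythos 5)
This theorem is not proved in the paper: it is quoted verbatim from Bu--Chen--Hu \cite{BCH} (Theorems 1.1 and 1.2) and fed into the extrapolation Theorem \ref{thm:limited range extrp.compact} as a black box. The whole point of the extrapolation machinery is to \emph{avoid} redoing a Fr\'echet--Kolmogorov argument, so there is no internal proof to compare your sketch against.

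Taken as a reconstruction of \cite{BCH}, your outline identifies the right framework (reduction to $b\in C_c^\infty$ via $\|[b,B^\kappa]\|_{L^p\to L^p}\lesssim\|b\|_{\BMO}$ and operator-norm closure of the compacts, then Fr\'echet--Kolmogorov), but it stops short on the equicontinuity step, which is the only genuinely hard part. The splitting
\[
\tau_h[b,B^\kappa]f-[b,B^\kappa]f=[\tau_hb-b,B^\kappa](\tau_hf)+[b,B^\kappa](\tau_hf-f)
\]
cannot by itself establish equicontinuity: as you observe, the second summand has size $O(\|b\|_{\BMO}\,\|\tau_hf-f\|_{L^p})$, which is not uniformly small over the unit ball of $L^p$. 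Your proposed repair, decomposing $B^\kappa=T^0+\sum_i 2^{-i\kappa}\operatorname{Dil}_{1-2^{-i}}S_{2^{-i}}$ and proving a per-piece equicontinuity estimate with an $i$-dependent loss beaten by $2^{-i\kappa}$, is indeed the correct shape of the actual argument (the slow polynomial decay $|K^\kappa(x)|\lesssim(1+|x|)^{-(d+1)/2-\kappa}$ rules out a direct Uchiyama-type kernel truncation), but no per-piece estimate or quantification of the loss is offered, and you explicitly defer this to \cite{BCH}. In effect, the proposal reduces to citing \cite{BCH} for the one step that matters, which is exactly what the paper does by simply quoting the theorem.
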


Combining Theorem \ref{thm:limited range extrp.compact}, Corollary \ref{coro:commu. of Bochner-Riesz limited range estimate} and Theorem \ref{thm:unweighted compt. comm. of Bochner-Riesz limited range estimate} we can give the new weighted compactness result for the Bochner--Riesz commutator $[b,B^\kappa]$:

\begin{corollary}
If $d=2$, $0<\kappa<\frac{1}{2}$, and $p\in(\frac{4}{1+6\kappa},\frac{4}{1-2\kappa})$, then for $b\in\CMO(\R^2)$, the commutator $[b,B^\kappa]$ is compact on $L^p(w)$ for all $w\in A_{\frac{p(1+6\kappa)}{4}}(\R^2)\cap RH_{\big(\frac{4}{p(1-2\kappa)}\big)'}(\R^2)$.
\newline Moreover, if $d\ge 3$, $\frac{d-1}{2d+2}<\kappa<\frac{d-1}{2}$, $1<p_0<2$ is such that the estimate \eqref{eq:conjecture} of Conjecture \ref{conj.} holds,
\begin{equation*}
  p\in\bigg(\frac{p_0(d-1)}{d-1+2\kappa(p_0-1)},\frac{p_0(d-1)}{d-1-2\kappa}\bigg),   
\end{equation*}
and $b\in\CMO(\R^d)$, then the commutator $[b,B^\kappa]$ is compact on $L^p(w)$ for all
\begin{equation*}
  w\in A_{\frac{p(d-1+2\kappa(p_0-1))}{p_0(d-1)}}(\R^d)\cap RH_{\big(\frac{p_0(d-1)}{p(d-1-2\kappa)}\big)'}(\R^d).
\end{equation*}
\end{corollary}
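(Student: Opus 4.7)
The plan is to verify the hypotheses of Theorem \ref{thm:limited range extrp.compact} for $T = [b, B^\kappa]$, with parameters $(p_-, p_+)$ as in the statement and with the constant weight $w_1 \equiv 1$ at some exponent $p_1 \in (p_-, p_+)$ to be specified below. The weighted boundedness hypothesis of Theorem \ref{thm:limited range extrp.} is, for every such $p_1$, an immediate consequence of Corollary \ref{coro:commu. of Bochner-Riesz limited range estimate}. The constant weight $1$ lies in $A_{p_1/p_-}(\R^d) \cap RH_{(p_+/p_1)'}(\R^d)$ trivially. The only remaining hypothesis is the compactness of $[b, B^\kappa]$ on $L^{p_1}(\R^d)$, which I would supply from Theorem \ref{thm:unweighted compt. comm. of Bochner-Riesz limited range estimate}, provided $p_1$ can be chosen inside the intersection of $(p_-, p_+)$ with the unweighted compactness range of that theorem.

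For $d = 2$ the unweighted compactness range is $(\tfrac{4}{3+2\kappa}, \tfrac{4}{1-2\kappa})$, while the target range is $(\tfrac{4}{1+6\kappa}, \tfrac{4}{1-2\kappa})$. Since $\kappa < \tfrac{1}{2}$ gives $1+6\kappa < 3+2\kappa$, the target range is contained in the compactness range, so any $p_1$ in the target interval works.

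For $d \geq 3$ the assumption $\kappa > \tfrac{d-1}{2d+2}$ is exactly what Theorem \ref{thm:unweighted compt. comm. of Bochner-Riesz limited range estimate} requires, and its compactness range is $(\tfrac{2d}{d+1+2\kappa}, \tfrac{2d}{d-1-2\kappa})$. The inequality $p_0 < 2$ immediately yields $p_+ = \tfrac{p_0(d-1)}{d-1-2\kappa} < \tfrac{2d}{d-1-2\kappa}$. The one nontrivial step is the lower inequality $\tfrac{p_0(d-1)}{d-1+2\kappa(p_0-1)} > \tfrac{2d}{d+1+2\kappa}$. After clearing denominators and collecting terms, I expect this to reduce to $(d-1-2\kappa)(p_0(d+1) - 2d) > 0$. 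The first factor is positive by $\kappa < \tfrac{d-1}{2}$; the second is positive because Conjecture \ref{conj.} restricts \eqref{eq:conjecture} to $p \in (\tfrac{2d}{d+1}, \tfrac{2d}{d-1})$, so the assumed validity at $p_0$ forces $p_0 > \tfrac{2d}{d+1}$. This algebraic verification is the one place where actual computation is required, and it is the only possible obstacle to the plan.

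With such a $p_1$ fixed, Theorem \ref{thm:limited range extrp.compact} applied to $T = [b, B^\kappa]$ with $w_1 \equiv 1$ immediately yields the claimed compactness of $[b, B^\kappa]$ on $L^p(w)$ for every admissible pair $(p, w)$, completing the argument.
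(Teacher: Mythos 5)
Your argument is correct and follows the same route as the paper's proof: apply Theorem \ref{thm:limited range extrp.compact} with $w_1\equiv 1$ at a suitable exponent $p_1$ in the target limited range, using Corollary \ref{coro:commu. of Bochner-Riesz limited range estimate} for the weighted boundedness hypothesis and Theorem \ref{thm:unweighted compt. comm. of Bochner-Riesz limited range estimate} for the unweighted compactness. You additionally spell out the algebraic check, which the paper's proof leaves implicit, that the target range lies inside the unweighted compactness range of Theorem \ref{thm:unweighted compt. comm. of Bochner-Riesz limited range estimate} --- for $d\geq 3$ this reduces correctly to $(d-1-2\kappa)\bigl(p_0(d+1)-2d\bigr)>0$, both factors being positive since $\kappa<\tfrac{d-1}{2}$ and since $p_0>\tfrac{2d}{d+1}$ is forced by the standing hypothesis that \eqref{eq:conjecture} holds at $p_0$.
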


\begin{proof}
Let $d\ge 3$, $\frac{d-1}{2d+2}<\kappa<\frac{d-1}{2}$ and $p_0$ be as in the assumptions. We verify the assumptions of Theorem \ref{thm:limited range extrp.compact} for the fixed exponent
\begin{equation*}
  \frac{p_0(d-1)}{d-1+2\kappa(p_0-1)}<p_1<\frac{p_0(d-1)}{d-1-2\kappa}
\end{equation*}
and the operator $[b,B^\kappa]$ in place of $T$. By Corollary \ref{coro:commu. of Bochner-Riesz limited range estimate}, $[b,B^\kappa]$ is a bounded operator on $L^{p_1}(\tilde w)$ for all
\begin{equation*}
  \tilde w\in A_{\frac{p_1(d-1+2\kappa(p_0-1))}{p_0(d-1)}}(\R^d)\cap RH_{\big(\frac{p_0(d-1)}{p_1(d-1-2\kappa)}\big)'}(\R^d).
\end{equation*}
By Theorem \ref{thm:unweighted compt. comm. of Bochner-Riesz limited range estimate}, $[b,B^\kappa]$ is a compact operator on $L^{p_1}(\R^d)=L^{p_1}(w_1)$ with
\begin{equation*}
  w_1\equiv 1\in A_{\frac{p_1(d-1+2\kappa(p_0-1))}{p_0(d-1)}}(\R^d)\cap RH_{\big(\frac{p_0(d-1)}{p_1(d-1-2\kappa)}\big)'}(\R^d).
\end{equation*}
Thus Theorem \ref{thm:limited range extrp.compact} applies to give the compactness of $[b,B^\kappa]$ on $L^p(w)$ for all
\begin{equation*}
  p\in\bigg(\frac{p_0(d-1)}{d-1+2\kappa(p_0-1)},\frac{p_0(d-1)}{d-1-2\kappa}\bigg)
\end{equation*}
and all 
\begin{equation*}
  w\in A_{\frac{p(d-1+2\kappa(p_0-1))}{p_0(d-1)}}(\R^d)\cap RH_{\big(\frac{p_0(d-1)}{p(d-1-2\kappa)}\big)'}(\R^d).
\end{equation*}
The case $d=2$ follows in a similar way.
\end{proof}

\section{$A_p^\zeta(\varphi)$ weights and commutators of pseudo-differential operators}\label{sec:Ap(varphi) weights}

In this section, we develop and apply yet another variant for extrapolation of compactness for a special class of weights related to commutators of pseudo-differential operators with smooth symbols.

Following Wu--Wang \cite{WW:2018}, we consider the following:

\begin{definition}
A function $\varphi:[0,\infty)\to[1,\infty)$ is called {\em admissible} if it is non-decreasing and satisfies the following:
\begin{equation*}
  \varphi(\zeta t)\lesssim\zeta^{\omega}\varphi(t),
\end{equation*}
for all $\zeta\ge1$, $t\ge0$ and some $\omega>0$.
\end{definition}

\begin{definition}\label{def:Ap(varphi) weights}
Let $\varphi$ be an admissible function and $p\in(1,\infty)$, $\zeta>0$. A weight $0<w\in L^1_{\loc}(\R^d)$ is called an $A_p^\zeta(\varphi)$ weight (or $w\in A_p^\zeta(\varphi)$) if
\begin{equation*}
  [w]_{A_p^\zeta(\varphi)}:=\sup_Q\frac{\ave{w}_Q\ave{w^{-\frac{1}{p-1}}}_Q^{p-1}}{\varphi(|Q|)^{\zeta p}}<\infty,
\end{equation*}
where the supremum is taken over all cubes $Q\subset\R^d$.
\end{definition}

\begin{remark}
In \cite{Tang}, Tang introduced the weight class $A_p(\varphi)$ which coincides with $A_p^1(\varphi)$. We remark that $A_p^\zeta(\varphi)=A_p(\varphi^\zeta)$. In general, it holds that $A_p(\R^d)\subset A_p^\zeta(\varphi)$ for all $1<p<\infty$. On the other hand, when $\varphi$ is a constant function, $A_p^\zeta(\varphi)=A_p(\R^d)$ for any $\zeta>0$. A main example of admissible function that we consider below is $\varphi(t)=1+t$.
\end{remark}

\subsection{Extrapolation with $A_p^\zeta(\varphi)$ weights}

In \cite[Theorem 2.1]{GZ}, Guo--Zhou proved the compactness of commutators of pseudo-differential operators with smooth symbols on weighted Lebesgue spaces where the weight functions belong to the weight class $A_p^\zeta(\varphi)$. Motivated by their work we show the following extrapolation of compactness:

\begin{theorem}\label{thm:Ap(varphi) compact}
Let $\varphi$ be an admissible function, $1<p<\infty$, and $T$ be a linear operator simultaneously defined and bounded on $L^{p}(w)$ for {\bf all} $1<p<\infty$, {\bf all} $w\in A_p^{\zeta}(\varphi)$ and {\bf all} $\zeta>0$. Suppose in addition that $T$ is compact on $L^{p_1}(w_1)$ for {\bf some} $1<p_1<\infty$, {\bf some} $w_1\in A_{p_1}^{\zeta_1}(\varphi)$ and {\bf some} $\zeta_1>0$. Then $T$ is compact on $L^p(w)$ for all $p\in (1,\infty)$, all $w\in A_p^{\zeta}(\varphi)$ and all $\zeta>0$.
\end{theorem}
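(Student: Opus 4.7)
The plan is to mimic the arguments used for Theorems~\ref{thm:Off-dig.extrp.compact} and~\ref{thm:limited range extrp.compact}: I will apply the Cwikel--Kalton interpolation theorem~\ref{thm:CwKa} with the lattice side condition~\eqref{it:lattice} to a suitable interpolation pair of weighted Lebesgue spaces. Fixing the target datum $(p,w,\zeta)$ and the given compactness datum $(p_1,w_1,\zeta_1)$, it suffices to produce a third datum $(p_0,w_0,\zeta_0)$ with $w_0\in A_{p_0}^{\zeta_0}(\varphi)$ and some $\theta\in(0,1)$ such that
\begin{equation*}
  L^p(w)=[L^{p_0}(w_0),L^{p_1}(w_1)]_\theta.
\end{equation*}
Once this representation is in hand, Lemma~\ref{lem:lemOk} supplies the lattice condition of Theorem~\ref{thm:CwKa}, the simultaneous boundedness hypothesis on $T$ gives boundedness at both endpoints, and compactness at $L^{p_1}(w_1)$ is part of the assumptions; Theorem~\ref{thm:CwKa} then delivers the compactness of $T$ on $L^p(w)$.

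The required interpolation identity will be built on the Stein--Weiss Theorem~\ref{thm:SW}, following the template of Lemma~\ref{lem:main1}: I will take
\begin{equation*}
  p_0(\theta)=\frac{1-\theta}{\frac{1}{p}-\frac{\theta}{p_1}},\qquad
  w_0(\theta)=w^{\frac{p_0}{p(1-\theta)}}w_1^{-\frac{p_0\theta}{p_1(1-\theta)}},
\end{equation*}
so that Theorem~\ref{thm:SW} automatically produces the displayed interpolation identity. Since $p_0(0)=p$ and $w_0(0)=w$, both the exponent and the weight are continuous perturbations of the target data, and the only remaining task is to show that for all sufficiently small $\theta>0$ one can find $\zeta_0>0$ with $w_0(\theta)\in A_{p_0(\theta)}^{\zeta_0}(\varphi)$.

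To verify this membership, I will expand the ratio $\ave{w_0}_Q\ave{w_0^{-1/(p_0-1)}}_Q^{p_0-1}/\varphi(|Q|)^{\zeta_0 p_0}$ and apply H\"older's inequality with exponents $1+\varepsilon^{\pm 1}$ and $1+\delta^{\pm 1}$, with $\varepsilon,\delta$ vanishing as $\theta\to 0$ in the same way as in the passage leading to~\eqref{eq:beforeRHI1}, in order to separate the contributions of $w$ and $w_1$. The resulting averages involve $w$, $w^{-1/(p-1)}$, $w_1$, and $w_1^{-1/(p_1-1)}$ raised to powers $r(\theta),s(\theta),t(\theta),u(\theta)$ that tend to $1$ as $\theta\to 0$. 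At this point I will invoke the reverse H\"older inequality appropriate to $A_p^\zeta(\varphi)$ weights (available in the work of Tang and of Wu--Wang underlying Definition~\ref{def:Ap(varphi) weights}), which takes the form $\ave{u^t}_Q^{1/t}\leq C\varphi(|Q|)^{\beta}\ave{u}_Q$ for $t\leq 1+\eta$, with constants $\eta,\beta$ depending only on $[u]_{A_p^\zeta(\varphi)}$. The main obstacle will be the careful bookkeeping of the $\varphi(|Q|)$ factors emerging both from the H\"older step and from four applications of the reverse H\"older inequality, and the verification that the net exponent can be written as $\zeta_0 p_0$ for some strictly positive $\zeta_0$ (which arises as an explicit combination of $\zeta,\zeta_1,\beta$, and $\theta$). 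Once this is confirmed, one obtains an estimate of the form
\begin{equation*}
  [w_0]_{A_{p_0}^{\zeta_0}(\varphi)}\lesssim [w]_{A_p^\zeta(\varphi)}^{1/(1-\theta)}[w_1]_{A_{p_1}^{\zeta_1}(\varphi)}^{\theta/(1-\theta)}<\infty,
\end{equation*}
and Cwikel--Kalton then concludes the proof exactly as in the earlier sections.
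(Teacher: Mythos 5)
Your proposal matches the paper's proof essentially step for step: the same construction of $p_0(\theta)$ and $w_0(\theta)$, the same use of Stein--Weiss interpolation and the Cwikel--Kalton theorem with the lattice side condition, the same H\"older-plus-reverse-H\"older strategy (with the $\varphi(|Q|)$ factors absorbed into a new $\zeta_0$), exactly as in the paper's Lemma \ref{lem:main3} and Proposition \ref{prop:Ap(varphi)}. The only cosmetic difference is that the paper tunes $\eps$ and $\delta$ so that only two exponent functions $r(\theta),s(\theta)$ appear rather than your four; the argument is otherwise identical.
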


We proceed by collecting the results from which the proof of Theorem \ref{thm:Ap(varphi) compact} follows. We will use Theorem \ref{thm:CwKa} in the special setting:

\begin{proposition}\label{prop:Ap(varphi)}
Let $\varphi$ be an admissible function and suppose that $q,q_1\in (1,\infty),\zeta,\zeta_1>0$, $v\in A_q^\zeta(\varphi)$, $v\in A_{q_1}^{\zeta_1}(\varphi)$. Then
\begin{equation*}
  [L^{q_0}(v_0),L^{q_1}(v_1)]_{\gamma}=L^q(v)
\end{equation*}
for some $q_0\in(1,\infty), \zeta_0>0$, $v_0\in A_{q_0}^{\zeta_0}(\varphi)$, and $\gamma\in(0,1)$.
\end{proposition}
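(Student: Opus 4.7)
The plan is to parallel Lemma \ref{lem:main1}, with the technical modification that the admissible function $\varphi$ introduces a controllable $\varphi(|Q|)^c$ slack into the reverse H\"older step, which is then absorbed into the output exponent $\zeta_0$---a parameter we are free to enlarge arbitrarily, unlike the rigid Muckenhoupt constant produced in Lemma \ref{lem:main1}.

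First, I parametrize by $\theta\in(0,1)$ and set
\[
  q_0=q_0(\theta):=\frac{1-\theta}{\tfrac{1}{q}-\tfrac{\theta}{q_1}},\qquad
  v_0=v_0(\theta):=v^{\frac{q_0}{q(1-\theta)}}\,v_1^{-\frac{q_0\theta}{q_1(1-\theta)}},
\]
which are precisely the values forced by the convexity identities $\frac{1}{q}=\frac{1-\theta}{q_0}+\frac{\theta}{q_1}$ and $v^{1/q}=v_0^{(1-\theta)/q_0}v_1^{\theta/q_1}$ of Theorem \ref{thm:SW}. Since $q_0(0)=q\in(1,\infty)$, continuity gives $q_0\in(1,\infty)$ for all small $\theta>0$, so Theorem \ref{thm:SW} will yield $[L^{q_0}(v_0),L^{q_1}(v_1)]_\theta=L^q(v)$ as soon as I verify that $v_0\in A_{q_0}^{\zeta_0}(\varphi)$ for some $\zeta_0>0$.

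Second, to check the $A_{q_0}^{\zeta_0}(\varphi)$ condition I substitute the formula for $v_0$ into $\ave{v_0}_Q\ave{v_0^{-1/(q_0-1)}}_Q^{q_0-1}$ and apply H\"older's inequality with conjugate exponents $1+\eps^{\pm 1}$ to the first average and $1+\delta^{\pm 1}$ to the second, splitting off four factors, each an average of a single weight. Choosing $\eps=\eps(\theta)$ and $\delta=\delta(\theta)$ vanishing at $\theta=0$ in the spirit of Lemma \ref{lem:main1} forces the induced powers on $v$ and $v_1$ to approach the canonical $A_q^\zeta(\varphi)$ and $A_{q_1}^{\zeta_1}(\varphi)$ exponents $1,-1/(q-1)$ and $1,-1/(q_1-1)$ respectively as $\theta\to 0^+$. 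An appropriate reverse H\"older inequality for $A_p^\zeta(\varphi)$ weights, of the form $\ave{w^t}_Q^{1/t}\lesssim\ave{w}_Q\,\varphi(|Q|)^{c}$ valid for $1\leq t\leq 1+\eta$ (with $c,\eta$ depending on $[w]_{A_p^\zeta(\varphi)}$ and the growth parameter $\omega$ of $\varphi$), which is the natural $\varphi$-analogue of \eqref{eq:RHI} and is available within Tang's framework \cite{Tang}, then lets me replace each of the four averages by its canonical counterpart modulo a multiplicative factor of $\varphi(|Q|)^{c_i(\theta)}$.

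Third, invoking the defining inequalities of $A_q^\zeta(\varphi)$ and $A_{q_1}^{\zeta_1}(\varphi)$ bounds the resulting product of canonical averages by $\varphi(|Q|)^{\frac{\zeta q}{1-\theta}+\frac{\zeta_1 q_1\theta}{1-\theta}}$, to which the cumulative $\varphi(|Q|)^{\sum_i c_i(\theta)}$ from the reverse H\"older step attaches. Setting $\zeta_0$ to be the total accumulated exponent divided by $q_0$ gives $v_0\in A_{q_0}^{\zeta_0}(\varphi)$, and an application of Theorem \ref{thm:SW} finishes the proof. The main obstacle I anticipate is locating (or, failing that, establishing) the precise reverse H\"older inequality for $A_p^\zeta(\varphi)$ weights with quantitative $\varphi(|Q|)^{c}$ error, and carefully tracking the four $\varphi$-factors through the H\"older splitting; the flexibility to take $\zeta_0$ arbitrarily large makes the bookkeeping forgiving, since any finite accumulated power of $\varphi(|Q|)$ is acceptable.
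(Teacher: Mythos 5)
Your proposal follows the paper's proof almost step for step: the paper reduces Proposition \ref{prop:Ap(varphi)} to Lemma \ref{lem:main3}, which uses exactly your $\theta$-parametrization, the same H\"older split with exponents $1+\eps^{\pm1}$, $1+\delta^{\pm1}$ chosen to vanish as $\theta\to 0^+$, and the same key observation that the accumulated $\varphi(|Q|)$-powers can be absorbed into a freely enlarged $\zeta_0$. The only detail left loose in your sketch is the source of the $\varphi$-reverse H\"older inequality and the companion duality $W\in A_v^{\tilde\zeta}(\varphi)\Longleftrightarrow W^{1-v'}\in A_{v'}^{\tilde\zeta}(\varphi)$ (needed to apply the reverse H\"older bound to the negative powers $v^{-1/(q-1)}$, $v_1^{-1/(q_1-1)}$), both of which the paper takes from Wu--Wang \cite[Proposition~15]{WW:2018} rather than from Tang.
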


The only component of the proof of Theorem \ref{thm:Ap(varphi) compact} that requires actual computations is the verification of this proposition. For this purpose we will need Theorem \ref{thm:SW} which we connect it with the $A_p^\zeta(\varphi)$ weights as follows:

\begin{lemma}\label{lem:main3}
Let $\varphi$ be an admissible function and $p_1,p\in(1,\infty),\zeta,\zeta_1>0$, $w_1\in A_{p_1}^{\zeta_1}(\varphi)$, $w\in A_p^{\zeta}(\varphi)$. Then there exists $p_0\in(1,\infty),\zeta_0>0$, $w_0\in A_{p_0}^{\zeta_0}(\varphi)$, and $\theta\in(0,1)$ such that the conclusion of Theorem \ref{thm:SW} holds, i.e.,
\begin{equation*}
  [L^{p_0}(w_0),L^{p_1}(w_1)]_\theta=L^p(w),
\end{equation*}
where
\begin{equation*}
  \frac{1}{p}=\frac{1-\theta}{p_0}+\frac{\theta}{p_1},\qquad
  w^{\frac{1}{p}}=w_0^{\frac{1-\theta}{p_0}}w_1^{\frac{\theta}{p_1}}.
\end{equation*}
\end{lemma}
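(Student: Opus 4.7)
The plan is to follow the template of the proof of Lemma \ref{lem:main1}, specialized to the diagonal case and adapted to the class $A_p^\zeta(\varphi)$. As there, the choice of $\theta \in (0,1)$ forces
\begin{equation*}
p_0 = p_0(\theta) = \frac{1-\theta}{\frac{1}{p}-\frac{\theta}{p_1}}, \qquad w_0 = w_0(\theta) = w^{\frac{p_0}{p(1-\theta)}}\, w_1^{-\frac{p_0\theta}{p_1(1-\theta)}},
\end{equation*}
so that the convexity relations in the statement hold by construction, and Theorem \ref{thm:SW} then automatically identifies $[L^{p_0}(w_0),L^{p_1}(w_1)]_\theta$ with $L^p(w)$. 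Because $p_0(0)=p>1$, continuity provides $p_0(\theta)\in(1,\infty)$ for all sufficiently small $\theta>0$, so the task reduces to exhibiting $\zeta_0>0$ for which $w_0\in A_{p_0}^{\zeta_0}(\varphi)$.

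Fix a cube $Q$ and consider $\ave{w_0}_Q\ave{w_0^{-1/(p_0-1)}}_Q^{p_0-1}$. I would apply H\"older's inequality with exponents $1+\varepsilon^{\pm 1}$ to the first average and $1+\delta^{\pm 1}$ to the second, expressing the product as
\begin{equation*}
\ave{w^{r(\theta)}}_Q^{\alpha_1(\theta)}\,\ave{w_1^{-s(\theta)/(p_1-1)}}_Q^{\alpha_2(\theta)}\,\ave{w^{-t(\theta)/(p-1)}}_Q^{\alpha_3(\theta)}\,\ave{w_1^{u(\theta)}}_Q^{\alpha_4(\theta)},
\end{equation*}
with $\varepsilon$ and $\delta$ chosen linearly in $\theta$ (analogously to $\varepsilon=\theta q/p_1'$, $\delta=\theta p'/q_1$ in Lemma \ref{lem:main1}) so that $r(0)=s(0)=t(0)=u(0)=1$. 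This alignment is designed so that the four averages are poised to be compared, in the limit $\theta\to 0$, to the four ingredients of the $A_p^\zeta(\varphi)$ and $A_{p_1}^{\zeta_1}(\varphi)$ constants of $w$ and $w_1$.

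The main obstacle is handling the small excess in the powers $r(\theta),s(\theta),t(\theta),u(\theta)$, which strictly exceed $1$ once $\theta>0$. In Lemma \ref{lem:main1} this is done via the classical Coifman--Fefferman reverse H\"older inequality \eqref{eq:RHI}. Here one instead invokes its $A_p^\zeta(\varphi)$-analogue due to Tang \cite{Tang}: if $W\in A_r^{\xi}(\varphi)$, there exist $\eta>0$ and $\kappa>0$, depending on $[W]_{A_r^\xi(\varphi)}$, $r$, and $\xi$, such that $\ave{W^t}_Q^{1/t}\lesssim\varphi(|Q|)^{\kappa}\ave{W}_Q$ whenever $t\leq 1+\eta$. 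A direct computation from Definition \ref{def:Ap(varphi) weights} gives the duality $w\in A_p^\zeta(\varphi)\iff w^{-1/(p-1)}\in A_{p'}^\zeta(\varphi)$ (with the same $\zeta$), so each of $w,w^{-1/(p-1)},w_1,w_1^{-1/(p_1-1)}$ lies in some $A_\ast^\ast(\varphi)$ class and the reverse H\"older inequality applies to all four averages for $\theta>0$ small enough.

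Combining these reverse H\"older bounds with the defining $A_\ast^\ast(\varphi)$ inequalities for $w$ and $w_1$, the surviving first-moment averages assemble into an estimate of the form
\begin{equation*}
\ave{w_0}_Q\ave{w_0^{-1/(p_0-1)}}_Q^{p_0-1}\lesssim [w]_{A_p^\zeta(\varphi)}^{\frac{1}{1-\theta}}\,[w_1]_{A_{p_1}^{\zeta_1}(\varphi)}^{\frac{\theta}{1-\theta}}\,\varphi(|Q|)^{\zeta_0 p_0},
\end{equation*}
where $\zeta_0>0$ absorbs both the natural $\varphi$-exponents $\zeta,\zeta_1$ coming from the two weight classes and the auxiliary $\kappa$-exponents produced by the four applications of the reverse H\"older inequality. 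Taking the supremum over cubes $Q$ yields $[w_0]_{A_{p_0}^{\zeta_0}(\varphi)}<\infty$ for all sufficiently small $\theta>0$, which completes the proof.
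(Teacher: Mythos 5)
Your proposal follows essentially the same strategy as the paper's proof: the same explicit formulas for $p_0(\theta)$ and $w_0(\theta)$, the same H\"older step with $\varepsilon,\delta$ proportional to $\theta$ chosen to make the auxiliary powers tend to $1$ as $\theta\to 0$, and the same reverse H\"older inequality for $A_\ast^\ast(\varphi)$ weights combined with the $A_p^\zeta(\varphi)$-duality to absorb the small excess power. The only minor discrepancy is one of attribution: the paper invokes the reverse H\"older inequality and the duality property from Wu--Wang \cite[Proposition 15]{WW:2018} rather than Tang \cite{Tang}, but this does not affect the correctness of the argument.
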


\begin{proof}
Note that the choice of $\theta\in(0,1)$ determines both
\begin{equation*}
  p_0=p_0(\theta)=\frac{1-\theta}{\frac{1}{p}-\frac{\theta}{p_1}},\qquad
  w_0=w_0(\theta)=w^{\frac{p_0}{p(1-\theta)}}w_1^{-\frac{p_0\cdot\theta}{p_1(1-\theta)}},
\end{equation*}
so it remains to check that we can choose $\theta\in(0,1)$ so that $p_0\in(1,\infty)$ and $w_0\in A_{p_0}^{\zeta_0}(\varphi)$ for some $\zeta_0>0$. Since $p_0(0)=p\in(1,\infty)$, the first condition is obvious for small enough $\theta>0$ by continuity.

To check that $w_0\in A_{p_0}^{\zeta_0}(\varphi)$ for some $\zeta_0>0$, we consider a cube $Q$ and write
\begin{equation*}
\begin{split}
  &\ave{w_0}_Q \ave{w_0^{-\frac{1}{p_0-1}}}_Q^{p_0-1}
  =\ave{w^{\frac{p_0}{p(1-\theta)}}w_1^{-\frac{p_0\cdot\theta}{p_1(1-\theta)}}}_Q
  \ave{w^{-\frac{p_0'}{p(1-\theta)}}w_1^{\frac{p_0'\cdot\theta}{p_1(1-\theta)}}}_Q^{p_0-1}  \\
  &=\ave{w^{\frac{p_0}{p(1-\theta)}}(w_1^{-\frac{1}{p_1-1}})^{\frac{p_0\cdot\theta}{p_1'(1-\theta)}}}_Q
  \ave{(w^{-\frac{1}{p-1}})^{\frac{p_0'}{p'(1-\theta)}}w_1^{\frac{p_0'\cdot\theta}{p_1(1-\theta)}}}_Q^{p_0-1},
\end{split}
\end{equation*}
where $q':=q/(q-1)$ denotes the conjugate exponent of $q\in\{p,p_0,p_1\}$.

In the first average, we use H\"older's inequality with exponents $1+\eps^{\pm 1}$, and in the second with exponents $1+\delta^{\pm 1}$ to get
\begin{equation*}
\begin{split}
  &\leq \ave{w^{\frac{p_0(1+\eps)}{p(1-\theta)}}}_Q^{\frac{1}{1+\eps}}  \ave{(w_1^{-\frac{1}{p_1-1}})^{\frac{p_0\theta(1+\eps)}{p_1'\eps(1-\theta)}}}_Q^{\frac{\eps}{1+\eps}}
  \ave{(w^{-\frac{1}{p-1}})^{\frac{p_0'(1+\delta)}{p'(1-\theta)}} }_Q^{\frac{p_0-1}{1+\delta}}  \\ 
  &\qquad\times\ave{ w_1^{\frac{p_0'\theta(1+\delta)}{p_1\delta(1-\theta)}}}_Q^{\frac{\delta(p_0-1)}{1+\delta}}.
\end{split}
\end{equation*}
If we choose $\eps=\theta p/p_1'$ and $\delta=\theta p'/p_1$, the previous line takes the form
\begin{equation}\label{eq:beforeRHI3}
\begin{split}
  &=\ave{w^{r(\theta)}}_Q^{\frac{p_1'}{p_1'+\theta p}}\ave{(w_1^{-\frac{1}{p_1-1}})^{r(\theta)}}_Q^{\frac{\theta p}{p_1'+\theta p}}
  \ave{(w^{-\frac{1}{p-1}})^{s(\theta)} }_Q^{\frac{p_1(p_0-1)}{p_1+\theta p'}}  \\
  &\qquad\times\ave{ w_1^{s(\theta)}}_Q^{\frac{\theta p'(p_0-1)}{p_1+\theta p'}},
\end{split}
\end{equation}
where
\begin{equation*}
  r(\theta):=\frac{p_0(\theta)(p_1'+\theta p)}{p\cdot p_1'(1-\theta)},\qquad
  s(\theta):=\frac{p_0(\theta)'(p_1+\theta p')}{ p' p_1(1-\theta)}.
\end{equation*}

The strategy to proceed is to use the reverse H\"older inequality for $A_v^{\tilde\zeta}(\varphi)$ weights due to Wu--Wang \cite[Proposition 15]{WW:2018}, which says that for each $W\in A_v^{\tilde\zeta}(\varphi)$ there exists $\eta>0$ such that
\begin{equation}\label{eq:RHI2}
  \ave{W^t}_Q^{1/t}\lesssim \ave{W}_{Q}\varphi(|Q|)^{\eta}
\end{equation}
for all $t\leq 1+\tilde\eta$ and for some $\tilde\eta>0$.

Recalling that $p_0(0)=p$, we see that $r(0)=1=s(0)$. By continuity, given any $\tilde\eta>0$, we find that 
\begin{equation}\label{eq:new weights}
  \max(r(\theta),s(\theta))\leq 1+\tilde\eta\,\,\,\text{for all small enough}\,\,\,\theta>0.
\end{equation}

Next we will apply another property of $A_v^{\tilde\zeta}(\varphi)$ weights as stated in Wu--Wang \cite[Proposition 15]{WW:2018}, namely:
\newline If $1<v<\infty$, we have 
\begin{equation}\label{Ap(zeta)(phi) prop.}
  W\in A_v^{\tilde\zeta}(\varphi)\Longleftrightarrow W^{1-v'}\in A_{v'}^{\tilde\zeta}(\varphi),\qquad\text{where}\qquad\frac{1}{v}+\frac{1}{v'}=1.
\end{equation}

By \eqref{Ap(zeta)(phi) prop.} we have that $w\in A_p^\zeta(\varphi)$, $w_1^{-\frac{1}{p_1-1}}\in A_{p_1'}^{\zeta_1}(\varphi)$, $w^{-\frac{1}{p-1}}\in A_{p'}^\zeta(\varphi)$, and $w_1\in A_{p_1}^{\zeta_1}(\varphi)$. Hence by \eqref{eq:new weights} each of these four functions satisfies the reverse H\"older inequality \eqref{eq:RHI2} for all $t\leq 1+\tilde\eta$ and for some $\tilde\eta>0$. Thus, for all small enough $\theta>0$, we have
\begin{equation*}
\begin{split}
  \eqref{eq:beforeRHI3}
  &\lesssim \ave{w}_Q^{r(\theta)\frac{p_1'}{p_1'+\theta p}} \ave{w_1^{-\frac{1}{p_1-1}}}_Q^{r(\theta)\frac{\theta p}{p_1'+\theta p}}
  \ave{w^{-\frac{1}{p-1}} }_Q^{s(\theta)\frac{p_1(p_0-1)}{p_1+\theta p'}}  \\
  &\qquad\times\ave{ w_1 }_Q^{s(\theta)\frac{\theta p'(p_0-1)}{p_1+\theta p'}}\varphi(|Q|)^{\eta r(\theta)+\eta s(\theta)(p_0-1)}  \\
  &=\ave{w}_Q^{\frac{p_0(\theta) }{p(1-\theta)}} \ave{w_1^{-\frac{1}{p_1-1}}}_Q^{\frac{\theta p_0(\theta)}{p_1'(1-\theta)}}\ave{w^{-\frac{1}{p-1}} }_Q^{\frac{p_0(\theta)}{p'(1-\theta)}}  \\
  &\qquad\times\ave{ w_1 }_Q^{\frac{\theta p_0(\theta)}{p_1(1-\theta)}}\varphi(|Q|)^{\eta r(\theta)+\eta s(\theta)(p_0(\theta)-1)} \\
  &=(\ave{w}_Q\ave{w^{-\frac{1}{p-1}} }_Q^{p-1})^{\frac{p_0(\theta) }{p(1-\theta)}}
    (\ave{ w_1 }_Q\ave{w_1^{-\frac{1}{p_1-1}}}_Q^{p_1-1})^{\frac{\theta p_0(\theta)}{p_1(1-\theta)}}  \\
  &\qquad\times\varphi(|Q|)^{\eta r(\theta)+\eta s(\theta)(p_0(\theta)-1)}  \\
  &\leq [w]_{A_{p}^{\zeta}(\varphi)}^{\frac{p_1}{p_1-\theta p}}[w_1]_{A_{p_1}^{\zeta_1}(\varphi)}^{\frac{\theta p}{p_1-\theta p}}\varphi(|Q|)^{\zeta_0 p_0(\theta)},
\end{split}
\end{equation*}
where $\zeta_0=\eta\frac{r(\theta)+s(\theta)(p_0(\theta)-1)}{p_0(\theta)}+\frac{\zeta}{1-\theta}+\frac{\zeta_1 \theta}{1-\theta}>0$. In combination with the lines preceding \eqref{eq:beforeRHI3}, we have shown that
\begin{equation*}
   [w_0]_{A_{p_0}^{\zeta_0}(\varphi)}\lesssim [w]_{A_{p}^{\zeta}(\varphi)}^{\frac{p_1}{p_1-\theta p}} [w_1]_{A_{p_1}^{\zeta_1}(\varphi)}^{\frac{\theta p}{p_1-\theta p}}<\infty,
\end{equation*}
provided that $\theta>0$ is small enough. This concludes the proof. 
\end{proof}

We now have the last missing ingredient of the proof of Theorem \ref{thm:Ap(varphi) compact}:

\begin{proof}[Proof of Proposition \ref{prop:Ap(varphi)}]
We are given $q,q_1\in (1,\infty),\zeta,\zeta_1>0$, and weights $v\in A_q^\zeta(\varphi)$, $v_1\in A_{q_1}^{\zeta_1}(\varphi)$. By Lemma \ref{lem:main3}, there is some $q_0\in (1,\infty),\zeta_0>0$, a weight $v_0\in A_{q_0}^{\zeta_0}(\varphi)$, and $\theta\in(0,1)$ such that
\begin{equation*}
  \frac{1}{q}=\frac{1-\theta}{q_0}+\frac{\theta}{q_1},\qquad
  w^{\frac{1}{q}}=w_0^{\frac{1-\theta}{q_0}}w_1^{\frac{\theta}{q_1}}.
\end{equation*}
By Theorem \ref{thm:SW}, we then have $L^q(v)=[L^{q_0}(v_0),L^{q_1}(v_1)]_\theta$, as we claimed.
\end{proof}

By combining Theorem \ref{thm:CwKa}, Lemma \ref{lem:lemOk} and Proposition \ref{prop:Ap(varphi)} we can prove Theorem \ref{thm:Ap(varphi) compact} as follows:

\begin{proof}[Proof of Theorem \ref{thm:Ap(varphi) compact}]
Recall that the assumptions of Theorem \ref{thm:Ap(varphi) compact} are in force. In particular, $T$ is a bounded linear operator on $L^p(w)$ for all $p\in(1,\infty)$, all $w\in A_p^\zeta(\varphi)$ and all $\zeta>0$. In addition, it is assumed that $T$ is a compact operator on $L^{p_1}(w_1)$ for some $p_1\in(1,\infty)$, some $w_1\in A_{p_1}^{\zeta_1}(\varphi)$ and some $\zeta_1>0$. We need to prove that $T$ is actually compact on $L^p(w)$ for all $p\in(1,\infty)$, all $w\in A_p^\zeta(\varphi)$ and all $\zeta>0$. By Proposition \ref{prop:Ap(varphi)}, we have
\begin{equation*}
  L^p(w)=[L^{p_0}(w_0),L^{p_1}(w_1)]_\theta
\end{equation*}
for some $p_0\in(1,\infty)$, some $\zeta_0>0$, some $w_0\in A_{p_0}^{\zeta_0}(\varphi)$, and some $\theta\in(0,1)$. Writing $X_j=Y_j=L^{p_j}(w_j)$, we know that $T:X_0+X_1\to Y_0+Y_1$, that $T:X_j\to Y_j$ is bounded, and that $T:X_1\to Y_1$ is compact (since the last two assertions were assumed). By Lemma \ref{lem:lemOk}, the last condition \eqref{it:lattice} of Theorem \ref{thm:CwKa} is also satisfied by these spaces $X_j=Y_j=L^{p_j}(w_j)$. By Theorem \ref{thm:CwKa}, it follows that $T$ is also compact on $[X_0,X_1]_\theta=[Y_0,Y_1]_\theta=L^p(w)$.
\end{proof}

We provide an application of Theorem \ref{thm:Ap(varphi) compact} that concerns pseudo-differential operators with smooth symbols.

\subsection{Commutators of pseudo-differential operators with smooth symbols}

Following \cite{Taylor}, we say that a symbol $\sigma$ belongs to $S_{1,\lambda}^m$ if $\sigma(x,\xi)$ is a smooth function of $(x,\xi)\in\R^d\times\R^d$ and satisfies the following estimate:
\begin{equation*}
  \abs{\partial_x^\mu\partial_\xi^\nu\sigma(x,\xi)}\lesssim(1+\abs{\xi})^{m-\abs{\nu}+\lambda|\mu|},
\end{equation*}
for all $\mu,\nu\in \N^d$, where $m\in\R$.

Let $\sigma(x,\xi)\in S_{1,\lambda}^m$. The pseudo-differential operator $T$ is defined by
\begin{equation*}
  Tf(x)=\int_{\R^d}\sigma(x,\xi)e^{2\pi ix\cdot\xi}\widehat f(\xi)d\xi,
\end{equation*}
where $f$ is a Schwartz function and $\widehat f$ denotes the Fourier transform of $f$. As usual, $L_{1,\lambda}^m$ will denote the class of pseudo-differential operators with symbols in $S_{1,\lambda}^m$.

Miller \cite{Miller} showed the boundedness of $L_{1,0}^0$ pseudo-differential operators on $L^p(w)$ for $1<p<\infty$ and $w\in A_p(\R^d)$. Tang \cite{Tang} improved the results of Miller by showing the boundedness of $L_{1,0}^0$ pseudo-differential operators and their commutators on $L^p(w)$, where $w\in A_p^\zeta(\varphi)$, $\varphi(t)=1+t$ and $\zeta>0$ (Tang also makes a remark about the case $L_{1,\lambda}^0$ ($0<\lambda<1$); see \cite[after Corollary 1.2]{Tang}).

We will apply Theorem \ref{thm:Ap(varphi) compact} to the commutators of pseudo-differential operators $T\in L_{1,0}^0$. We need the following result of Tang \cite{Tang}:

\begin{theorem}[\cite{Tang}, Theorem 1.2]\label{thm:bdd. of comm. of pseudo-diff. oper. with smooth symbols}
Suppose that $T\in L_{1,0}^0$. Let $b\in\BMO(\R^d)$, $1<p<\infty$. Then $[b,T]$ is bounded on $L^p(w)$ for all $w\in A_p^\zeta(\varphi)$, where $\varphi(t)=1+t$ and $\zeta>0$.
\end{theorem}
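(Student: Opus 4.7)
The plan is to apply the Coifman--Rochberg--Weiss complex-variable method. For $z\in\C$, introduce the analytic family of operators
\[
T_z f := e^{zb}\,T(e^{-zb} f),
\]
which satisfies $T_0=T$ and, formally, $\frac{\mathrm{d}}{\mathrm{d}z}T_z f\big|_{z=0} = [b,T]f$. By Cauchy's integral formula on a circle of radius $r$,
\[
[b,T]f \;=\; \frac{1}{2\pi\img}\oint_{\abs{z}=r}\frac{T_z f}{z^2}\,\mathrm{d}z,
\]
so the goal reduces to controlling $\Norm{T_z f}{L^p(w)}$ uniformly for $\abs{z}\le r$ with some sufficiently small $r>0$ (depending on $\Norm{b}{\BMO}$ and $[w]_{A_p^\zeta(\varphi)}$). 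The resulting estimate $\Norm{[b,T]f}{L^p(w)}\lesssim r^{-1}\sup_{\abs{z}=r}\Norm{T_z f}{L^p(w)}$, with $r\sim\Norm{b}{\BMO}^{-1}$, then delivers the desired commutator bound.

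Next I would unfold
\[
\Norm{T_z f}{L^p(w)} \;=\; \Norm{T(e^{-zb}f)}{L^p(we^{p\Re(z)b})},
\]
reducing the problem to showing that $T$ is bounded on $L^p(w_z)$ with $w_z:=we^{p\Re(z)b}$, uniformly for $\abs{z}\le r$. Since the hypothesis on $T\in L_{1,0}^0$ supplies boundedness on $L^p(\tilde w)$ for \emph{every} $\tilde w\in A_p^{\tilde\zeta}(\varphi)$ and \emph{every} $\tilde\zeta>0$, the entire argument rests on the following weight-perturbation claim: if $w\in A_p^\zeta(\varphi)$ and $b\in\BMO$, then $we^{sb}\in A_p^{\zeta'}(\varphi)$ for some $\zeta'>0$, uniformly for real $s$ with $\abs{s}\Norm{b}{\BMO}$ small.

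This perturbation claim is the main obstacle. To prove it I would expand
\[
\ave{we^{sb}}_Q\ave{w^{-\frac{1}{p-1}}e^{-\frac{s}{p-1}b}}_Q^{p-1}
\]
using H\"older's inequality with exponents $1+\eta^{\pm 1}$ on the first average and $1+\delta^{\pm 1}$ on the second, separating each weight from its exponential factor. The weight factors $\ave{w^{1+\eta}}_Q^{1/(1+\eta)}$ and $\ave{w^{-(1+\delta)/(p-1)}}_Q^{(p-1)/(1+\delta)}$ are absorbed, at the cost of extra powers of $\varphi(\abs{Q})$, by the reverse H\"older inequality for $A_p^\zeta(\varphi)$ weights of Wu--Wang (equation \eqref{eq:RHI2} in the proof of Lemma \ref{lem:main3}), for $\eta,\delta$ small enough depending on $[w]_{A_p^\zeta(\varphi)}$. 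The exponential factors $\ave{e^{s'b}}_Q$ are controlled by the John--Nirenberg inequality: for $\abs{s'}\Norm{b}{\BMO}$ small, $\ave{e^{s'(b-\ave{b}_Q)}}_Q\lesssim 1$, so $\ave{e^{s'b}}_Q\lesssim e^{s'\ave{b}_Q}$. When the four estimates are multiplied, the $e^{s\ave{b}_Q}$ factor coming from the first average cancels the $(p-1)$-th power of $e^{-s\ave{b}_Q/(p-1)}$ coming from the second, and one is left with a bound of the form $[w]_{A_p^\zeta(\varphi)}\,\varphi(\abs{Q})^{\zeta' p}$ of the required shape.

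With the perturbation claim established, the weighted boundedness hypothesis yields $T_z$ uniformly bounded on $L^p(w)$ for $\abs{z}\le r$, and the Cauchy integral representation closes the argument. The conceptually delicate step is tuning $\eta$, $\delta$, and $r$ so that the Wu--Wang reverse H\"older inequality and the John--Nirenberg inequality are simultaneously applicable; the flexibility of having $\zeta'$ differ from $\zeta$ is essential, and matches the hypothesis that $T$ is assumed bounded on $L^p(\tilde w)$ for \emph{every} $\tilde\zeta>0$.
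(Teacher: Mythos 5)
The paper does not prove this theorem: it is quoted directly from Tang \cite{Tang} as an ingredient for the compactness result, so there is no in-paper proof to compare against, and your argument must be assessed on its own. That said, it is a valid and natural reconstruction. The Coifman--Rochberg--Weiss analytic-family / Cauchy-integral trick is exactly the mechanism the paper invokes elsewhere (via Theorem 3.22 and Corollary 5.3 of \cite{BMMST}) to convert weighted bounds for $T$ into weighted bounds for $[b,T]$ in the $A_{p,q}$ and $A_p\cap RH$ settings, and your adaptation to $A_p^\zeta(\varphi)$ weights is the right analogue. You correctly isolate the only non-formal step as the perturbation claim $we^{sb}\in A_p^{\zeta'}(\varphi)$ uniformly for $\abs{s}\Norm{b}{\BMO}$ small, and your sketch of its proof is sound: H\"older with $1+\eps^{\pm 1}$ exponents separates the weight factors from the exponential factors, the Wu--Wang reverse H\"older inequality \eqref{eq:RHI2} handles the former at the cost of an enlarged $\zeta'$ (which the hypothesis absorbs, since boundedness of $T$ is available for every $\tilde\zeta>0$), and John--Nirenberg handles the latter with the $e^{\pm s\ave{b}_Q}$ contributions cancelling exactly. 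It is presumably a genuinely different route from Tang's original argument, which one would expect to run through a Fefferman--Stein sharp-maximal pointwise estimate rather than the complex-variable method; your version is more modular in that it uses the full-strength weighted boundedness of $T$ as a black box.

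One point deserves to be made fully explicit, as it is where such proofs most often go wrong: the order in which the parameters are fixed. The H\"older parameters $\eta,\delta$ must be chosen first, small enough for the reverse H\"older inequality \eqref{eq:RHI2} to apply to $w$ and $w^{-1/(p-1)}$ (hence depending on $[w]_{A_p^\zeta(\varphi)}$ and $p$); only afterwards can $r$ be chosen small enough so that both $pr(1+\eta^{-1})\Norm{b}{\BMO}$ and $pr(1+\delta^{-1})\Norm{b}{\BMO}/(p-1)$ fall within the John--Nirenberg exponential-integrability range. Thus $r$, and the resulting commutator constant, depend on $[w]_{A_p^\zeta(\varphi)}$ and $p$ as well as on $\Norm{b}{\BMO}$. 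You acknowledge this dependence in your opening parenthetical, but the later shorthand $r\sim\Norm{b}{\BMO}^{-1}$ understates it; this is not a gap, only a place to be careful with quantifiers in a full write-up.
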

 
By \cite[Th\'eor\`eme 19]{CM:audela} these operators are instances of Calder\'on--Zygmund operators, namely:
\begin{equation*}
  Tf(x)=\int_{\R^d}K(x,y)f(y)\ud y,\qquad x\notin\supp f,
\end{equation*}
where $T$ is a linear operator defined on a suitable class of test functions on $\R^d$ and the kernel $K$ satisfies the {\em standard estimates} 
\begin{equation*}
  \abs{K(x,y)}\lesssim\frac{1}{\abs{x-y}^d}
\end{equation*}
and, for some $\delta_0\in(0,1]$,
\begin{equation*}
  \abs{K(x,y)-K(z,y)}+\abs{K(y,x)-K(y,z)}\lesssim \frac{\abs{x-z}^{\delta_0}}{\abs{x-y}^{d+\delta_0}},
\end{equation*}
for all $x,z,y\in\R^d$ such that $\abs{x-y}>\frac{1}{2}\abs{x-z}$. 

The following result about the compactness in $L^p(\R^d)$ for the commutators of Calder\'on--Zygmund operators is due to Uchiyama \cite{Uchi}:

\begin{theorem}[\cite{Uchi}]\label{thm:Uchi}
Let $T$ be a Calder\'on--Zygmund operator that extends to a bounded operator on $L^2(\R^d)$. If $b\in\CMO(\R^d)$, then $[b,T]$ is compact on the unweighted $L^p(\R^d)$ for all $p\in(1,\infty)$.
\end{theorem}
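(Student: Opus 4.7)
The plan is to reduce the problem to a density argument plus the classical Fr\'echet--Kolmogorov compactness criterion in $L^p(\R^d)$. The starting point is the Coifman--Rochberg--Weiss theorem, which says that $b \mapsto [b,T]$ is a bounded linear map from $\BMO(\R^d)$ to the space $\bddlin(L^p(\R^d))$ of bounded operators on $L^p$. Since compact operators form a closed subspace of $\bddlin(L^p(\R^d))$, and since $\CMO(\R^d)$ is by definition the closure of $C_c^\infty(\R^d)$ in $\BMO$, it suffices to establish the compactness of $[b,T]$ on $L^p(\R^d)$ for $b \in C_c^\infty(\R^d)$; the general case $b \in \CMO$ then follows by approximation.

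For fixed $b \in C_c^\infty(\R^d)$, I would verify the three hypotheses of the Fr\'echet--Kolmogorov criterion for the image $\{[b,T]f : \Norm{f}{L^p} \le 1\}$: uniform $L^p$-boundedness, uniform decay at infinity, and equicontinuity of translations. The first is immediate from Coifman--Rochberg--Weiss. For the decay at infinity, write $[b,T]f(x) = b(x)Tf(x) - T(bf)(x)$. The first summand is supported in $\supp b$, which is compact, so it causes no difficulty. For $T(bf)(x)$ with $|x|$ large (say, outside a ball containing $\supp b$), one uses the pointwise size bound $|K(x,y)| \lesssim |x-y|^{-d}$ together with H\"older's inequality: since $|x-y| \gtrsim |x|$ for $y \in \supp b$, one gains a factor tending to $0$ as $|x| \to \infty$, uniformly in $\Norm{f}{L^p} \le 1$.

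The genuinely delicate step, and what I expect to be the main obstacle, is the equicontinuity: showing that
\begin{equation*}
\sup_{\Norm{f}{L^p} \le 1} \Norm{\tau_h [b,T]f - [b,T]f}{L^p(\R^d)} \to 0 \quad \text{as } |h| \to 0.
\end{equation*}
Here $\tau_h$ denotes translation by $h$. For the $b(x)Tf(x)$ term, one splits $\tau_h(bTf) - bTf = (\tau_h b)(\tau_h Tf - Tf) + (\tau_h b - b)Tf$; the second piece is controlled using smoothness of $b$ and $L^p$-boundedness of $T$, while the first requires a quantitative equicontinuity of $T$ on $L^p$, obtained from the H\"older-type kernel regularity estimate with exponent $\delta_0$. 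For the $T(bf)$ term one decomposes the kernel into a near-diagonal part (handled by a truncated maximal-type estimate, using that $b$ is bounded and compactly supported) and a far part (handled by the smoothness estimate $|K(x+h,y)-K(x,y)| \lesssim |h|^{\delta_0}/|x-y|^{d+\delta_0}$, which gives a gain of $|h|^{\delta_0}$ after integration). Balancing the truncation radius against $|h|$ then yields the desired modulus of continuity going to zero.

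Combining the three estimates above with Fr\'echet--Kolmogorov gives compactness of $[b,T]$ on $L^p(\R^d)$ for every $b \in C_c^\infty(\R^d)$, and density of $C_c^\infty$ in $\CMO$ together with the operator-norm continuity of $b \mapsto [b,T]$ upgrades this to compactness for all $b \in \CMO(\R^d)$.
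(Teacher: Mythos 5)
The paper does not prove Theorem \ref{thm:Uchi}; it is quoted from Uchiyama's paper, so there is no internal proof to compare against. Evaluating your sketch on its own terms, the overall frame (reduce to $b\in C_c^\infty(\R^d)$ by density of $C_c^\infty$ in $\CMO$ and operator-norm continuity of $b\mapsto[b,T]$ via Coifman--Rochberg--Weiss, then apply Fr\'echet--Kolmogorov) is the right one and matches the literature, and your boundedness and decay-at-infinity steps are sound. However, the equicontinuity step contains a genuine gap: you propose to prove equicontinuity of $b\,Tf$ and $T(bf)$ \emph{separately}, and in particular you invoke ``a quantitative equicontinuity of $T$ on $L^p$, obtained from the H\"older-type kernel regularity.'' No such uniform equicontinuity holds: if $\sup_{\Norm{f}{L^p}\le 1}\Norm{\tau_h Tf - Tf}{L^p}\to 0$, then combined with a localization this would force $T$ to be locally compact, which Calder\'on--Zygmund operators (e.g.\ the Hilbert transform) certainly are not. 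The H\"older kernel estimate for $K$ controls differences $|K(x+h,y)-K(x,y)|$ only off the diagonal, and does not give a modulus of continuity on $L^p$ for the full operator. Similarly, in the near-diagonal estimate for $T(bf)$ you write $\int_{|x-y|<\eta}|K(x,y)||b(y)||f(y)|\ud y$ which, absent the commutator cancellation, involves the non-integrable singularity $|x-y|^{-d}$; this integral does not converge pointwise, so the ``maximal-type'' bound you invoke is unavailable for $T(bf)$ alone.

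The missing idea is that the cancellation $b(x)-b(y)$ must be used \emph{before} splitting. One writes, for $x\notin\supp f$ or after truncation,
\begin{equation*}
  [b,T]f(x)=\int_{\R^d}\bigl(b(x)-b(y)\bigr)K(x,y)\,f(y)\ud y,
\end{equation*}
and uses $|b(x)-b(y)|\lesssim\Norm{\nabla b}{\infty}\,|x-y|$ to upgrade the singularity $|x-y|^{-d}$ to the locally integrable $|x-y|^{1-d}$. The standard proof then truncates: letting $T_\eta$ have kernel $K(x,y)\mathbf 1_{\{|x-y|>\eta\}}$, one shows $\Norm{[b,T]-[b,T_\eta]}{L^p\to L^p}\lesssim\eta\,\Norm{\nabla b}{\infty}$ via the maximal function (this is where the cancellation is essential), proves that each $[b,T_\eta]$ has a bounded, compactly supported-in-one-variable, H\"older kernel and hence satisfies Fr\'echet--Kolmogorov, and finally uses that compact operators are norm-closed to pass to the limit $\eta\to 0$. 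Your sketch needs to be restructured around this kernel-level decomposition of the commutator rather than the crude split $[b,T]=bT-Tb$.
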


By applying Theorem \ref{thm:Ap(varphi) compact} we can now recover a very recent result of Guo--Zhou \cite{GZ}:

\begin{theorem}[\cite{GZ}, Theorem 2.1] Suppose that $T\in L_{1,0}^0$. Let $b\in CMO(\R^d)$, $1<p<\infty$. Then the commutator $[b,T]$ is a compact operator on $L^p(w)$ for all $w\in A_p^\zeta(\varphi)$, where $\varphi(t)=1+t$ and $\zeta>0$.
\end{theorem}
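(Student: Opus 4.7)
The plan is to deduce this result as a direct application of Theorem \ref{thm:Ap(varphi) compact} to the operator $[b,T]$, with the unweighted space $L^{p_1}(\mathbb{R}^d)$ serving as the ``single space of compactness.'' The two hypotheses that must be verified are the $A_p^\zeta(\varphi)$-weighted boundedness of $[b,T]$ on every $L^p(w)$ and the compactness of $[b,T]$ on some $L^{p_1}(w_1)$ with $w_1 \in A_{p_1}^{\zeta_1}(\varphi)$.

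For the boundedness hypothesis, I would simply quote Tang's Theorem \ref{thm:bdd. of comm. of pseudo-diff. oper. with smooth symbols}: since $\mathrm{CMO}(\mathbb{R}^d) \subset \mathrm{BMO}(\mathbb{R}^d)$, the assumption $b \in \mathrm{CMO}$ implies that $[b,T]$ is bounded on $L^p(w)$ for every $1<p<\infty$, every $\zeta>0$, and every $w \in A_p^\zeta(\varphi)$ with $\varphi(t) = 1+t$. This matches verbatim the ``bounded on all $L^p(w)$'' hypothesis of Theorem \ref{thm:Ap(varphi) compact}.

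For the compactness hypothesis, I would fix any $p_1 \in (1,\infty)$ and choose $w_1 \equiv 1$. By the remark following Definition \ref{def:Ap(varphi) weights}, we have $A_{p_1}(\mathbb{R}^d) \subset A_{p_1}^{\zeta_1}(\varphi)$ for every $\zeta_1 > 0$, so $w_1 \equiv 1 \in A_{p_1}^{\zeta_1}(\varphi)$. To obtain the unweighted compactness of $[b,T]$ on $L^{p_1}(\mathbb{R}^d)$, I would appeal to Uchiyama's Theorem \ref{thm:Uchi}: the Coifman--Meyer representation recalled in the excerpt shows that every $T \in L_{1,0}^0$ is a Calder\'on--Zygmund operator (with standard kernel estimates of some order $\delta_0 \in (0,1]$), and such $T$ are bounded on $L^2(\mathbb{R}^d)$ (a special case of Tang's Theorem \ref{thm:bdd. of comm. of pseudo-diff. oper. with smooth symbols} applied with $b = 0$, or by the classical pseudo-differential calculus). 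Hence Theorem \ref{thm:Uchi} applies and gives that $[b,T]$ is compact on $L^{p_1}(\mathbb{R}^d)$ for every $p_1 \in (1,\infty)$.

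With both hypotheses in place, Theorem \ref{thm:Ap(varphi) compact} applied to $[b,T]$ immediately yields compactness on $L^p(w)$ for all $p \in (1,\infty)$, all $\zeta > 0$, and all $w \in A_p^\zeta(\varphi)$, which is exactly the desired conclusion. There is no genuine obstacle here: the entire argument is a soft combination of the boundedness result of Tang, the unweighted compactness result of Uchiyama, the embedding $A_p \subset A_p^\zeta(\varphi)$, and the new extrapolation Theorem \ref{thm:Ap(varphi) compact}. The only bookkeeping point worth stressing is that the analytic content has already been absorbed into Theorem \ref{thm:Ap(varphi) compact} (via Proposition \ref{prop:Ap(varphi)} and Lemma \ref{lem:main3}), so no further verification of Fr\'echet--Kolmogorov-type criteria on weighted spaces is required --- precisely in the spirit of the other applications in Sections \ref{comm. fr. int. op.} and \ref{comm. br. mult.}.
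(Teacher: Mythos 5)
Your proposal is correct and follows exactly the same route as the paper: apply Theorem \ref{thm:Ap(varphi) compact} to $[b,T]$, using Tang's Theorem \ref{thm:bdd. of comm. of pseudo-diff. oper. with smooth symbols} for the weighted boundedness hypothesis and Uchiyama's Theorem \ref{thm:Uchi} (together with the Coifman--Meyer Calder\'on--Zygmund representation) for the unweighted compactness hypothesis with $w_1\equiv 1$. The additional remarks you include about $\CMO\subset\BMO$ and $A_{p_1}(\R^d)\subset A_{p_1}^{\zeta_1}(\varphi)$ are accurate bookkeeping that the paper leaves implicit.
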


\begin{proof}
We verify the assumptions of Theorem \ref{thm:Ap(varphi) compact} for $[b,T]$ in place of $T$: By Theorem \ref{thm:bdd. of comm. of pseudo-diff. oper. with smooth symbols} $[b,T]$ is a bounded operator on $L^p(w)$ for all $1<p<\infty$, all $w\in A_p^\zeta(\varphi)$ and all $\zeta>0$. By Theorem \ref{thm:Uchi}, $[b,T]$ is a compact operator on $L^{p_1}(\R^d)=L^{p_1}(w_1)$ for any $1<p_1<\infty$ with $w_1\equiv 1\in A_{p_1}^{\zeta_1}(\varphi)$ and any $\zeta_1>0$. Thus Theorem \ref{thm:Ap(varphi) compact} applies to give the compactness of $[b,T]$ on $L^p(w)$ for all $p\in (1,\infty)$, all $w\in A_p^\zeta(\varphi)$ and all $\zeta>0$.
\end{proof}

As in the case of the commutators of fractional integral operators in Section \ref{comm. fr. int. op.} the original proof in \cite{GZ} relied on verifying the weighted Fr\'echet--Kolmogorov criterion \cite{ClopCruz}, which is avoided by the argument above.


\end{document}